\newtheorem{theorem}{Theorem}[section]
\newtheorem{proposition}[theorem]{Proposition}
\newtheorem{lemma}[theorem]{Lemma}
\newtheorem{e-proposition}[theorem]{Proposition}
\newtheorem{corollary}[theorem]{Corollary}
\newtheorem{e-definition}[theorem]{Definition}
\newtheorem{remark}{\it Remark\/}
\newtheorem{definition}[theorem]{Definition}
\newcommand{\ift}{\widehat{\cF}^{-1}}
\DeclareRobustCommand{\qed}{%
  \ifmmode 
  \else \leavevmode\unskip\penalty9999 \hbox{}\nobreak\hfill
  \fi
  \quad\hbox{\qedsymbol}}
\newcommand{\openbox}{\leavevmode
  \hbox to.77778em{%
  \hfil\vrule
  \vbox to.675em{\hrule width.6em\vfil\hrule}%
  \vrule\hfil}}
\newcommand{\qedsymbol}{\openbox}
\newenvironment{proof}[1][Proof]{\par
  \normalfont
  \topsep6\p@\@plus6\p@ \trivlist
  \item[\hskip\labelsep\bfseries
    #1\@addpunct{.}]\ignorespaces
}{%
  \qed\endtrivlist
}
\newcommand{\id}[1]{\ensuremath{\mathbbm{1}_{#1}}}
\newcommand{\bc}{\begin{center}}
\newcommand{\ec}{\end{center}}
\newcommand{\1}{\mathbf{1}}
\newcommand{\E}{\mathbb{E}}
\renewcommand{\P}{\mathbb{P}}
\newcommand{\Q}{\mathbb{Q}}
\newcommand{\R}{\mathbb{R}}
\newcommand{\N}{\mathbb{N}}
\newcommand{\cC}{\mathcal{C}}
\newcommand{\cF}{\mathcal{F}}
\newcommand{\cL}{\mathcal{L}}
\newcommand{\cR}{\mathcal{R}}
\newcommand{\cH}{\mathcal{H}}
\newcommand{\pare}[1]{\left ( #1 \right )}
\newcommand{\croc}[1]{\left [ #1 \right ]}
\def \defi{:=}
\begin{document}

\title{Existence of a time inhomogeneous skew Brownian motion and some related laws}

\title{On the existence of a time inhomogeneous skew Brownian motion and some related laws}
\author{Pierre \'Etor\'e\thanks{Corresponding author, Laboratoire Jean Kuntzmann- Tour IRMA
51, rue des Math\'ematiques
38041 Grenoble Cedex 9, France. Phone: + 33 (0)4 76 63 56 93; E-mail: pierre.etore@imag.fr}  \and Miguel Martinez\thanks{Universit\'e Paris-Est Marne-la-Vall\'ee, Laboratoire d'Analyse et de
Math\'ematiques Appliqu\'ees, UMR $8050$, 5  Bld Descartes,
Champs-sur-marne, 77454 Marne-la-Vall\'ee Cedex 2, France.  Phone: +33 (0)1 60 95 75 25; E-mail: miguel.martinez@univ-mlv.fr} }
\date{\today}
\maketitle
\begin{center}
\strut \textbf{Abstract:}

\smallskip
\begin{minipage}{0.75\textwidth}
This article is devoted to the construction of a solution for the "skew inhomogeneous Brownian motion" equation:
\begin{equation*}
B^{\bf \beta}_t = x + W_t + \int_0^t {\bf \beta}(s)dL^0_s(B^{\bf \beta}),\hspace{0.4 cm}t\geq 0.
\end{equation*}
Here ${\bf \beta} : {\mathbb R}^+\rightarrow [-1,1]$ is a Borel function, $W$ is a standard Brownian motion, and $L^0_.(B^{\bf \beta})$ stands for the symmetric local time at $0$ of the unknown process $B^{\bf \beta}$. 

Using the description of the straddling excursion above a deterministic time $t$, we also compute the joint law of $\pare{B^{\bf \beta}_t, L^0_t(B^{\bf \beta}), G_t^\beta}$ where $G_t^\beta$ is the last passage time at $0$ before $t$ of $B^{\bf \beta}$.

\end{minipage}

\bigskip

\strut \textbf{Keywords:}

\smallskip
\begin{minipage}{0.75\textwidth}
Skew Brownian motion ; Local time ; Straddling excursion.
\end{minipage}

\end{center}

\section{Introduction}
\label{Intro}

\subsection{Presentation}
Consider $(W_t)_{t \ge 0}$ a standard Brownian motion on some filtered probability space
$(\Omega,\mathcal{F}, (\mathcal{F}_t)_{t \ge 0}, \mathbb{P})$  where the filtration satisfies the usual right continuity and completeness conditions.

Let us introduce $B^\beta$ the solution of
\begin{equation}
\label{ISBM}
B^{\bf \beta}_t = x + W_t + \int_0^t {\bf \beta}(s)dL^0_s(B^{\bf \beta}),\hspace{0.4 cm}t\geq 0
\end{equation}
where ${\bf \beta} : {\mathbb R}^+\rightarrow [-1,1]$ is a Borel function and $L^0_.(B^{\bf \beta})$ stands for the symmetric local time at $0$ of the unknown process $B^{\bf \beta}$.
The process $B^{\beta}$ will be called "time inhomogeneous skew Brownian motion" for reasons explained below. 

Of course, the equation (\ref{ISBM}) is an extension of the now well-known skew Brownian motion with constant parameter, namely the solution of (\ref{ISBM}) when the function $\beta$ is a constant in $(-1,1)$. 

The reader may find many references concerning the homogeneous skew Brownian motion and various extensions in the literature~:~
starting with the seminal paper by Harrisson and Shepp \cite{H-S}, let us cite \cite{Barlow-1}, \cite{Ouknine}, \cite{Portenko},  \cite{BKKM}, \cite{B-C-flow}, \cite{L-1}, \cite{R}, \cite{Z}, and the recent article \cite{ABTWW}. 
To complete the references on the subject, we mention the interesting survey by Lejay \cite{lejay-2006} and the cited articles therein.

On the contrary, concerning extensions of the skew Brownian motion in an inhomogeneous setting, we only found very few references~:~apart from the seminal paper by Weinryb \cite{Weinryb}, we mention \cite{BBKM}, where a variably skewed Brownian motion is constructed as the solution of a different equation than (\ref{ISBM}).

Up to our knowledge, concerning an existence result for possible solutions of equation (\ref{ISBM}) the situation reduces to only two references~:~we have already mentioned \cite{Weinryb} where it is said
that {\it ''partial existence results were obtained by Watanabe \cite{W-1, W-2}"} (to be precise, see however a detail explained in Remark \ref{rem-Weinryb}). Unfortunately, we have not been able to exploit these results fully in order to give a satisfactory response to the existence problem for the solutions of (\ref{ISBM}). So that, contrary to what is said in the introduction of \cite{BBKM}, the paper \cite{Weinryb} does not show that strong existence holds for equation (\ref{ISBM}) unless $\beta(s)\equiv \beta$ is a constant function.

Our second reference concerning the possible solutions of (\ref{ISBM}) is the fundamental book (posterior to \cite{Weinryb}) of Revuz and Yor \cite{RY} , Chapter VI  Exercise 2.24 p. 246, which starts with~: {\it "Let $B^\beta$ be a continuous semimartingale, \underline{\it if it exists}, such that (\ref{ISBM}) holds"} (in this quote, we adapted the notation to our setting ; we underlined what seems to be a crucial point).
\medskip
\medskip

In this article, we give the expected positive answer to the existence of weak solutions for equation (\ref{ISBM}) in the general case where the parameter function is a Borel function $\beta$ with values in $[-1,1]$. Our results may be completed with the result of Weinryb \cite{Weinryb}, where it is shown that pathwise uniqueness holds for equation (\ref{ISBM}). Then, the combination of both results ensures the existence of a unique strong solution to (\ref{ISBM}). 
\medskip

We will present essentially two ways of constructing a weak solution to (\ref{ISBM}).
\medskip

The first one is based on the description of the excursion that straddles some fixed deterministic time. Up to our knowledge, though the idea seems quite natural in our context, the recovery of the transition probability density of a skew Brownian motion (be it inhomogeneous or homogeneous) from the description of the excursion that straddles some fixed deterministic time seems to be new and is not mentioned in the survey paper \cite{lejay-2006}. As a by product, we also compute the trivariate density of the vector $(L_1^0\pare{B^{\bf \beta}}, G_1^\beta, B^{\bf \beta}_1)$ where $G_1^\beta:=\sup\{s<1~:~|B^{ \beta}_s| = 0\}$ is the last passage at $0$ before time $1$ of the constructed process. 
\medskip

Let us now explain briefly the second construction.

The main idea is to approximate the function $\beta$ by a monotone sequence of piecewise constant functions $\pare{\beta_n}_{n\geq 0}$. 
Still we have to face some difficulties and the construction, even in the simpler case of a given (fixed) piecewise constant coefficient $\bar{\beta}$, does not seem so trivial.

In order to treat the simpler case of a given piecewise constant coefficient $\bar{\beta}$, we are inspired by a construction for the classical skew Brownian motion with constant parameter which is explained in an exercise of the reference book \cite{RY}. This construction uses a kind of random flipping for excursions that come from an independent standard reflected Brownian motion $|B|$. The difficulty to adapt this construction in our inhomogeneous setting lies in the fact that it does not seem possible, at least directly and in order to construct $B^{\bar{\beta}}$, to combine it with "pasting trajectory" arguments at each point where $\bar{\beta}$ changes its value. This difficulty arises because the flow of a classical skew Brownian motion is not defined for all starting points $x$ simultaneously (see the remark in the introduction of \cite{B-C-flow} p.1694, just before Theorem 1.1). Still, we manage to adapt the "excursion flipping" arguments in our inhomogeneous setting and to identify our construction with a weak solution of equation (\ref{ISBM})~:~instead of trying to paste trajectories together, we show that our construction preserves the Markovian character of the reflected Brownian motion $|B|$. Then, the ideas developed in the previous sections allow us to show that the constructed process satisfies an equation of type (\ref{ISBM}), yielding a weak solution. The existence in the general case is then deduced by proving a strong convergence result.

\subsection{Organisation of the paper}

The paper is organised as follows :
\begin{itemize}
\item In Section 2 we first present results given in \cite{Weinryb} concerning the possible solutions of (\ref{ISBM}). We also recall some facts concerning the standard Brownian motion and its excursion straddling one. We expose in a separate subsection the result obtained in this paper.
\item In Section 3 we assume that we have a solution of (\ref{ISBM}) and we compute a one-dimensional marginal law of this solution using the inversion of the Fourier transform. Comparing these results with well-known results concerning the standard Brownian motion gives a hint on what should be the bivariate density of $(G_1^\beta, B^{\bf \beta}_1)$.
\item These hinted links are explained in Section 4~:~following the lines of \cite{BPY} for a more complicated but time-homogeneous process (namely Walsh's Brownian motion), we manage to give a precise description of what happens after  the last exit from $0$ before time $1$ for the solutions of (\ref{ISBM}). This permits to compute the Azema projection of $B^\beta$ on the filtration $({\cal F}_{G_t^\beta})$. In turn, this description enables us to retrieve the results of the previous section and to give a proof of the Markov property for $B^\beta$  in full generality. We finish this section by proving a Kolmogorov's continuity criterion, which is uniform w.r.t. the parameter function $\beta$. 
\item Using the results of the previous sections, we show the existence of solutions for the inhomogeneous skew Brownian equation (\ref{ISBM}) in Section 5. We give a first result of existence for the solutions of (\ref{ISBM}) in the case where $\beta$ is sufficiently smooth. In this case, the constructed solution is strong. The methods used in this part rely on stochastic calculus and an extension of the It\^ o-Tanaka formula in a time dependent setting due to \cite{peskir}. 

The general case is deduced by convergence and the Chapman-Kolmogorov equations.
\item As a by product of the study made in the preceding sections, we derive the joint distribution of $(L^0_1\pare{B^{\bf \beta}},G_1^\beta, B^{\bf \beta}_1)$ using well-known facts concerning the standard Brownian motion.
\item Finally, in the last section, we give a proof for the existence of the solution of (\ref{ISBM}), using a flipping of excursions argument and a convergence result. \end{itemize}

\section{Notations, preliminaries and main results}
\label{sec:main-results}

\subsection{Notations}

Throughout this note, ${\rm \bf e}$ denotes the exponential law of parameter $1$ ;\hspace{0.2 cm} $\text{\rm Arcsin}$ is the
standard arcsin law with density $(\pi\sqrt{y(1-y)})^{-1}$ on $[0,1]$;\hspace{0.2 cm} $\cR(p)$ denotes the Rademacher law with $p$ parameter {\it i.e.} the law of random variable $Y$ taking values $\{-1,+1\}$ with 
${\mathbb P}(Y=1)=p = 1 - {\mathbb P}(Y=-1)$.

We denote for all $t\geq 0$,
$$
G_t\defi\sup\{s<t~:~|W_s| = 0\}\quad\text{and}\quad G_t^\beta:=\sup\{s<t~:~|B^{ \beta}_s| = 0\}.
$$

It is well known that $G_1 \stackrel{\cL}{\sim}\text{\rm Arcsin}$ (see \cite{RY} Chap. III, Exercise 3.20).
We will also denote for all  $0\leq u\leq 1$,
$$
M_u\defi\big|W_{G_1+u(1-G_1)}\big|/\sqrt{1-G_1}   \quad\text{and}\quad  M^\beta_u\defi\big|B^\beta_{G^\beta_1+u(1-G^\beta_1)}\big|/\sqrt{1-G^\beta_1}.
$$
The process $M$ is called the {\it Brownian Meander} of length $1$. It is well known that $M_1\stackrel{\cL}{\sim}\sqrt{2{\rm \bf e}}$ (see See \cite{RY}, Chap. XII, Exercise 3.8).
\newline

{\bf Acronyms}~:~throughout the paper the acronym BM denotes a standard Brownian motion. The acronym SBM denotes a constant parameter skew Brownian motion solution of (\ref{ISBM}) for some constant function $\beta(s)\equiv \beta$. The acronym ISBM for Inhomogeneous Skew Brownian Motion denotes the solution of (\ref{ISBM}) in the case where $\beta$ is not a constant function. Unless there is no ambiguity, the character weak or strong of the considered solutions of (\ref{ISBM}) will be made precise.  

For a given semimartingale $X$, we denote by $L^0_.(X)$  its symmetric local time at level $0$.

The expectation $\mathbb{E}^{x}$ (resp. $\E^{s,x}$) refers to the probability measure $\mathbb{P}^{x}\defi\P(\,\cdot\,|B^\beta_0=x)$
 (resp. $\mathbb{P}^{s,x}\defi\P(\,\cdot\,|B^\beta_s=x)$).

\subsection{Preliminaries}
Let ${\beta} : {\mathbb R}^+\rightarrow [-1,1]$ a Borel function. 
\vspace{0.3cm}
The following fundamental facts are the key of many considerations of this paper. 
\begin{proposition}(see \cite{Weinryb} or \cite{RY} Chap. VI Exercise 2.24 p. 246)
\label{prop-fund-Weinryb}
Assume \eqref{ISBM} has a weak solution $B^\beta$.
Then under $\P^0$,
$$(|B_t^{ \beta }|)_{t\geq 0}  \stackrel{\cL}{\sim} \pare{|W_t|}_{t\geq 0}.$$  
\end{proposition}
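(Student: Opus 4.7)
The plan is to apply the symmetric It\^o--Tanaka formula to $|B^\beta|$ and identify the resulting equation as the Skorokhod equation for reflected Brownian motion, whose solution is distributionally unique.

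First, I would apply the symmetric It\^o--Tanaka formula (the version using the symmetric sign function $\sign$ with $\sign(0)=0$, matched to the symmetric local time convention) to the continuous semimartingale $B^\beta$ and the function $x\mapsto |x|$. This gives
\begin{equation*}
|B_t^\beta| \;=\; |B_0^\beta| \;+\; \int_0^t \sign(B_s^\beta)\,dB_s^\beta \;+\; L_t^0(B^\beta).
\end{equation*}
Substituting the defining equation \eqref{ISBM} for $B^\beta$ splits the stochastic integral into a $dW$ part and a $dL^0$ part:
\begin{equation*}
|B_t^\beta| \;=\; |B_0^\beta| \;+\; \int_0^t \sign(B_s^\beta)\,dW_s \;+\; \int_0^t \sign(B_s^\beta)\,\beta(s)\,dL_s^0(B^\beta) \;+\; L_t^0(B^\beta).
\end{equation*}

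The key cancellation is that the measure $dL_s^0(B^\beta)$ is supported on $\{s : B_s^\beta = 0\}$, on which the symmetric sign vanishes; hence the middle integral is zero. I would justify this carefully since it is the crucial point where the coefficient $\beta$ disappears: it is precisely the symmetric choice of local time and sign that makes the drift term drop out, regardless of the particular Borel function $\beta$. One then obtains, under $\P^0$,
\begin{equation*}
|B_t^\beta| \;=\; \tilde W_t \;+\; L_t^0(B^\beta), \qquad \tilde W_t \defi \int_0^t \sign(B_s^\beta)\,dW_s.
\end{equation*}

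Next, I would verify that $\tilde W$ is a standard Brownian motion via L\'evy's characterization: it is a continuous local martingale with quadratic variation $\int_0^t \sign^2(B_s^\beta)\,ds = t$, because the occupation times formula applied to $|B^\beta|$ (or equivalently the existence of the local time $L^0(B^\beta)$) forces $\{s : B_s^\beta = 0\}$ to have zero Lebesgue measure almost surely. Finally, since $L_t^0(B^\beta)$ is continuous, nondecreasing, vanishes at $0$ and grows only on $\{s : |B_s^\beta|=0\}$, the pair $(|B^\beta|, L^0(B^\beta))$ solves the Skorokhod reflection problem driven by $\tilde W$. By the uniqueness of the Skorokhod decomposition (Revuz--Yor, Chap.~VI), $|B^\beta|$ must be the reflection of $\tilde W$, so its law coincides with that of $|W|$. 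The only real subtlety is confirming the symmetric-sign cancellation, and ensuring the local time $L^0(B^\beta)$ in the Tanaka expansion of $|B^\beta|$ is the \emph{same} object (up to the symmetric convention) as the one appearing in \eqref{ISBM}.
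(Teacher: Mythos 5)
Your proof is correct, but it follows a genuinely different route from the paper. The paper applies It\^o's formula to the \emph{squares}: it shows that both $(W_t)^2$ and $(B^\beta_t)^2$ satisfy the SDE $dX_t=2\sqrt{X_t}\,dZ_t+dt$ for suitable Brownian motions $Z$, using $B^\beta_s\,dL^0_s(B^\beta)=0$ to kill the $\beta$-term, and then concludes by uniqueness in law for that equation (Yamada--Watanabe, H\"older-$\tfrac12$ diffusion coefficient). You instead apply the symmetric Tanaka formula to $|B^\beta|$ directly, use the same cancellation in the form $\mathrm{sgn}(B^\beta_s)\,dL^0_s(B^\beta)=0$ (valid because the symmetric sign vanishes at $0$ and $dL^0$ is carried by the zero set), identify $\tilde W_t=\int_0^t\mathrm{sgn}(B^\beta_s)\,dW_s$ as a Brownian motion via L\'evy's theorem (the occupation-time argument for $\langle\tilde W\rangle_t=t$ is the right justification), and then invoke the deterministic Skorokhod lemma to conclude that $|B^\beta|$ is the pathwise reflection of $\tilde W$, hence equal in law to $|W|$. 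Both arguments hinge on exactly the same structural fact --- the symmetric conventions make $\beta$ disappear on the zero set --- but they buy slightly different things: the paper's squaring trick is shorter and delegates everything to a standard uniqueness-in-law result, while your Skorokhod route is more elementary (the reflection map is deterministic and explicit, so no SDE uniqueness input is needed) and gives as a by-product the identification of $L^0(B^\beta)$ with the regulator of $\tilde W$, i.e.\ the pair $(|B^\beta|,L^0(B^\beta))$ is a measurable functional of a single Brownian motion --- a representation in the spirit of what the paper exploits later. Your closing caveat is well placed: the only point requiring care is that the local time in the Tanaka expansion and the one in \eqref{ISBM} are both the \emph{symmetric} local time at $0$ of $B^\beta$, so they coincide, and that the symmetric sign (with $\mathrm{sgn}(0)=0$) is the one matched to that convention.
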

We give the short proof for the sake of completeness.

\begin{proof}
Applying It\^o's formula we get on one side
$$
(W_t)^2=2\int_0^tW_sdW_s+t=2\int_0^t|W_s|\mathrm{sgn}(W_s)dW_s+t=2\int_0^t\sqrt{W_s^2}\,dZ_s+t,$$
where we have set $Z_t:=\int_0^t\mathrm{sgn}(W_s)dW_s$. Notice that $Z$ is a Brownian motion, thanks to Lévy's theorem.
On the other side we get
$$
(B^\beta_t)^2=2\int_0^tB^\beta_sdB^\beta_s+t=2\int_0^tB^\beta_sdW^\beta_s +t =2\int_0^t\sqrt{(B^\beta_s)^2}\,dZ^\beta_s+t,$$
where we have used $\id{B^\beta_s=0}dL^0_s(B^\beta)=dL^0_s(B^\beta)$, and where $Z^\beta_t:=\int_0^t\mathrm{sgn}(B^\beta_s)dW^\beta_s$, with $W^\beta$ the BM associated to the weak solution $B^\beta$. Notice that $Z^\beta$ is a Brownian motion, thus $(W)^2$ and $(B^\beta)^2$ are solutions of the same SDE, that enjoys uniqueness in law. This proves the result (see \cite{yamada1, yamada2}).
\end{proof}
\vspace{0.3cm}

\begin{theorem}(see \cite{Weinryb} or \cite{RY} Chap. VI Exercise 2.24 p. 246)
\label{theo-fund-Weinryb}

Pathwise  uniqueness holds for the weak solutions of equation (\ref{ISBM}).
\end{theorem}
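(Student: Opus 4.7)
Let $B^{\beta,1}$ and $B^{\beta,2}$ be two weak solutions of \eqref{ISBM} carried by the same filtered probability space, driven by the same Brownian motion $W$ and starting from the same $x$. The plan is to analyse the difference $D_t := B^{\beta,1}_t - B^{\beta,2}_t$. Subtracting the two instances of \eqref{ISBM} makes the Brownian parts cancel, leaving
\[
D_t = \int_0^t \beta(s)\,d\bigl(L^0_s(B^{\beta,1}) - L^0_s(B^{\beta,2})\bigr), \qquad D_0 = 0.
\]
Thus $D$ is a continuous process of \emph{finite variation}. In particular $[D,D] \equiv 0$ and, by the occupation times formula, $L^a_t(D) = 0$ for every $a \in \R$.

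Next I would apply the symmetric It\^o--Tanaka formula to $|D|$, which, thanks to the previous step, collapses to
\[
|D_t| = \int_0^t \mathrm{sgn}(D_s)\,dD_s.
\]
Injecting $dD_s = \beta(s)(dL^0_s(B^{\beta,1}) - dL^0_s(B^{\beta,2}))$ and using that $dL^0_s(B^{\beta,1})$ is carried by $\{B^{\beta,1}_s = 0\}$ (on which $D_s = -B^{\beta,2}_s$, so $\mathrm{sgn}(D_s) = -\mathrm{sgn}(B^{\beta,2}_s)$) and symmetrically for $dL^0_s(B^{\beta,2})$, and adopting the symmetric convention $\mathrm{sgn}(0)=0$, I get the key identity
\[
|D_t| \;=\; -\!\int_0^t \mathrm{sgn}(B^{\beta,2}_s)\,\beta(s)\,dL^0_s(B^{\beta,1}) \;-\; \int_0^t \mathrm{sgn}(B^{\beta,1}_s)\,\beta(s)\,dL^0_s(B^{\beta,2}).
\]

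The hard step is to conclude from this identity that the right-hand side vanishes. The crude bound $|\mathrm{sgn}|\,|\beta|\leq 1$ only yields $|D_t|\leq L^0_t(B^{\beta,1}) + L^0_t(B^{\beta,2})$, which is not sufficient. To close the argument I would use the approximation of the symmetric local time by $(2\epsilon)^{-1}\int_0^t \id{|B^{\beta,i}_s|<\epsilon}\,ds$ and pass to the limit in each integral: on the set where $B^{\beta,i}_s$ is close to $0$, the continuity of $B^{\beta,3-i}$ together with $|B^{\beta,1}_s - B^{\beta,2}_s| = |D_s|$ allows one to replace $\mathrm{sgn}(B^{\beta,3-i}_s)$ by its value at the nearest zero of $B^{\beta,i}$, which matches the sign of $D$ by the same substitution rule and neutralises both integrands. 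Combined with $|\beta|\leq 1$ and Proposition \ref{prop-fund-Weinryb} (so that $L^0(B^{\beta,i})$ is supported on the $|W|$-type negligible zero set), this leads to $|D_t| \leq 0$ and therefore $D\equiv 0$. This final step, a time-inhomogeneous adaptation of the sign-flipping trick used for the constant-parameter SBM, is where Weinryb's argument in \cite{Weinryb} rests and where the main technical work lies.
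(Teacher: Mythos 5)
Your opening moves are correct and are indeed the standard ones: $D=B^{\beta,1}-B^{\beta,2}$ is continuous and of finite variation, its local times vanish, and the symmetric Tanaka formula plus the support properties of $dL^0(B^{\beta,1})$, $dL^0(B^{\beta,2})$ give exactly your key identity. Be aware, though, that the paper itself does not prove Theorem \ref{theo-fund-Weinryb}: it invokes \cite{Weinryb} and \cite{RY}, Chap.\ VI Exercise 2.24, and Remark \ref{rem-Weinryb} only discusses adapting the right-sided local time there to the symmetric one used here. So the decisive step rests entirely on you, and that is precisely where your proposal has a genuine gap.

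The right-hand side of your identity is not non-positive for free: on the support of $dL^0_s(B^{\beta,1})$ you only know $B^{\beta,1}_s=0$, and nothing a priori excludes times with, say, $\beta(s)>0$ and $B^{\beta,2}_s<0$, which contribute \emph{positively}. To conclude you would need that, $dL^0(B^{\beta,1})$-almost everywhere, either $B^{\beta,2}_s=0$ as well or $\mathrm{sgn}(B^{\beta,2}_s)\beta(s)\geq 0$ --- but that is essentially the statement to be proved. Your limiting argument does not deliver it: in the approximation $(2\varepsilon)^{-1}\int_0^t \id{|B^{\beta,1}_s|<\varepsilon}\,ds$ the other solution sits at distance $|D_s|$ from zero, a quantity over which you have no control at this stage (controlling it is the whole point), so ``replacing $\mathrm{sgn}(B^{\beta,2}_s)$ by its value at the nearest zero of $B^{\beta,1}$'' is unjustified and the integrands do not ``neutralise''; the Lebesgue-negligibility of the zero set is irrelevant, since $dL^0$ charges exactly that set. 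A correct completion requires an additional idea. One route: use $L^0(D)=0$ and the formula for the local time of the maximum of two semimartingales to check that $Z:=B^{\beta,1}\vee B^{\beta,2}$ again solves \eqref{ISBM} with the same $W$ and the same starting point; since all weak solutions have the same one-dimensional marginals (obtainable for any starting point by the Fourier argument of Section \ref{sec:fourier}, cf.\ Proposition \ref{prop-densityzero}), $\E\croc{Z_t}=\E\croc{B^{\beta,1}_t}$, whence $B^{\beta,2}_t\leq B^{\beta,1}_t$ a.s., and symmetry finishes. Alternatively, follow Weinryb's computation in \cite{Weinryb}, adapted as in Remark \ref{rem-Weinryb}. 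As written, your argument stops exactly where the real work begins.
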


\begin{remark}
\label{rem-Weinryb}
In the introductory article \cite{Weinryb}, it is shown that there is pathwise uniqueness for equation (\ref{ISBM}) but with a slight modification~: in \cite{Weinryb} the local time appearing in the equation is the standard right sided local time, so that the function ${\bf \beta}$ is supposed to take values in $(-\infty,1/2]$. Still, all the results of \cite{Weinryb} may be easily adapted for the case where $L_.^0(B^{\bf \beta})$ stands for the symmetric local time at $0$. We leave these technical aspects to the reader. 
\end{remark}

\vspace{0.3cm}

As $L_1^0(B^\beta)$, $M^\beta_1$ and $G^\beta_1$ (resp. $L_1^0(W)$, $M_1$ and $G_1$) are measurable functions of the trajectories of $|B^\beta|$ (resp. $|W|$), we get immediately the following corollary.

\begin{corollary}
\label{egal-loi-triplet}
We have
$$
(|B_1^{ \beta }|,L^0_1(B^\beta),G^\beta_1,M^\beta_1) \stackrel{\cL}{\sim} (|W_1|,L_1^0(W),G_1,M_1).
$$

\end{corollary}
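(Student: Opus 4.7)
The plan is to exploit Proposition \ref{prop-fund-Weinryb}, which gives process-level equality in law $(|B^\beta_t|)_{t\geq 0}\stackrel{\cL}{\sim}(|W_t|)_{t\geq 0}$ on the canonical path space $C(\R^+,\R^+)$. Once this is in hand, it suffices to exhibit each of the four coordinates $|B^\beta_1|$, $L^0_1(B^\beta)$, $G^\beta_1$, $M^\beta_1$ as the image, under one and the same measurable map $\Phi:C(\R^+,\R^+)\to\R^4$, of the trajectory $|B^\beta|$ (and symmetrically for $W$). Then the joint law of the quadruple is simply the push-forward of the law of $|B^\beta|$ (resp.\ $|W|$) by $\Phi$, and the equality follows at once.

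The only nontrivial point is the local time coordinate; the other three are tautologically measurable functionals of the absolute value path. For the symmetric local time at $0$ of the continuous semimartingale $B^\beta$ (resp.\ $W$), I would use the occupation-times representation
\begin{equation*}
L^0_t(X)\;=\;\lim_{\varepsilon\downarrow 0}\;\frac{1}{2\varepsilon}\int_0^t \id{|X_s|\leq \varepsilon}\,ds,
\end{equation*}
which holds almost surely for any continuous semimartingale $X$ with the symmetric convention, and which depends only on the path $|X|$. Hence $L^0_1(B^\beta)=\Psi(|B^\beta|_{[0,1]})$ and $L^0_1(W)=\Psi(|W|_{[0,1]})$ for the same measurable map $\Psi$ into $\R^+$ (defined, e.g., via a countable sequence $\varepsilon_n\downarrow 0$ so that the $\liminf$ is automatically Borel-measurable, and agreeing a.s.\ with the true limit).

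Combining this with the obvious functional expressions for $|B^\beta_1|$, $G^\beta_1=\sup\{s<1:|B^\beta_s|=0\}$ and $M^\beta_1=|B^\beta_1|/\sqrt{1-G^\beta_1}$, one gets a single measurable map $\Phi$ such that
\begin{equation*}
(|B^\beta_1|,L^0_1(B^\beta),G^\beta_1,M^\beta_1)=\Phi(|B^\beta|)\quad\text{and}\quad (|W_1|,L^0_1(W),G_1,M_1)=\Phi(|W|).
\end{equation*}
Proposition \ref{prop-fund-Weinryb} then gives the equality in law of the two quadruples. The main (and essentially only) technical point to be careful about is the a.s.\ identification of the symmetric local time with its Borel-measurable occupation-density functional; everything else is a direct transfer.
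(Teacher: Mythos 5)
Your argument is exactly the paper's (one-line) proof: all four coordinates are measurable functionals of the path of $|B^\beta|$ (resp.\ $|W|$), so the conclusion is a direct transfer via Proposition \ref{prop-fund-Weinryb}, and your elaboration of the local-time coordinate is sound. One small caveat: the occupation-time representation you quote should in general carry $d\langle X\rangle_s$ rather than $ds$ for an arbitrary continuous semimartingale; it reduces to your formula here only because $\langle B^\beta\rangle_t=\langle W\rangle_t=t$, which is worth saying explicitly.
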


The following known trivariate density will play a crucial role.

\begin{proposition}
\label{prop-exo-RY}
i) We have
\begin{equation}
\label{triplet}
(|W_1|,L_1^0(W),G_1)=(\sqrt{1-G_1}M_1,\sqrt{G_1}l^0,G_1),
\end{equation}
where $l^0\stackrel{\cL}{\sim}\sqrt{2{\rm \bf e}}$, and with $G_1,M_1,l^0$ independent.

ii) As a consequence, for all $t,s>0$, and all $\ell,x\geq 0$, the  image measure $\P^0[|W_t|\in dx,L_t^0(W)\in d\ell,G_t\in ds]$ is given by
\begin{equation}
\label{tridens-RY}
\1_{s\leq t}{\sqrt{\frac{2}{\pi s^3}}}\ell \exp\Big( -\frac{\ell^2}{2s} \Big)\frac{x}{\sqrt{2\pi(t-s)^3}}\exp\pare{-\frac{x^2}{2(t-s)}}\,ds\;d\ell\;dx.
\end{equation}
\end{proposition}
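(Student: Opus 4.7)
The strategy is to prove (i) by decomposing the Brownian path at its last zero $G_1$ before time $1$, then deduce (ii) by a straightforward density calculation, extending from $t=1$ to arbitrary $t$ via Brownian scaling.

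For (i), the relation $|W_1|=\sqrt{1-G_1}\,M_1$ is built into the definition of the meander. Since $W$ has no zeros on $(G_1,1]$, the symmetric local time satisfies $L_1^0(W)=L_{G_1}^0(W)$. Conditionally on $G_1=s$, the rescaled pre-$G_1$ process $u\mapsto W_{su}/\sqrt{s}$ is a standard Brownian bridge $b$ from $0$ to $0$ on $[0,1]$, whose law does not depend on $s$; by the scaling property of the symmetric local time this yields $L_{G_1}^0(W)=\sqrt{G_1}\,\ell_b$ with $\ell_b:=L_1^0(b)$ independent of $G_1$. The classical fact that the meander $M$ is independent of $\mathcal{F}_{G_1}$ (see Revuz-Yor, Chap.\ XII) then gives the joint decomposition (\ref{triplet}), provided one identifies the law of $\ell_b$ as $\sqrt{2\,{\rm \bf e}}$.

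This last point is extracted from Paul L\'evy's joint density
\[
\P\pare{W_1\in dx,\,L_1^0(W)\in d\ell}=\frac{\ell+|x|}{\sqrt{2\pi}}\,\exp\pare{-\tfrac{1}{2}(\ell+|x|)^2}\,dx\,d\ell,\qquad \ell>0,\;x\in\R,
\]
by conditioning on $\{W_1=0\}$: the resulting conditional density for $L_1^0(W)$ is the Rayleigh density $\ell\,e^{-\ell^2/2}$, which is exactly the law of $\sqrt{2\,{\rm \bf e}}$. Setting $l^0:=\ell_b$ then completes (i).

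Part (ii) is purely computational. Using the independent factors with arcsine density $(\pi\sqrt{s(1-s)})^{-1}\1_{(0,1)}(s)$ for $G_1$ and Rayleigh densities $\ell e^{-\ell^2/2}\1_{\ell>0}$ and $xe^{-x^2/2}\1_{x>0}$ for $l^0$ and $M_1$, the joint density of $(L_1^0(W),G_1,|W_1|)$ follows from (\ref{triplet}) and a one-line change of variables; one checks algebraically that this matches (\ref{tridens-RY}) at $t=1$. Brownian scaling $(|W_t|,L_t^0(W),G_t)\stackrel{\cL}{=}(\sqrt{t}\,|W_1|,\sqrt{t}\,L_1^0(W),t\,G_1)$ extends the formula to arbitrary $t>0$. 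The only genuinely nontrivial input in this plan is the identification of the bridge local-time law $\ell_b\sim\sqrt{2\,{\rm \bf e}}$; all other steps are either definitional, Brownian scaling, or classical independence statements for the meander decomposition.
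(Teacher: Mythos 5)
Your argument is correct, but it is worth noting that the paper does not actually prove this proposition: it simply cites Revuz--Yor, Chap.~XII, Exercise~3.8, whereas you give a self-contained derivation. Your route --- decompose at $G_1$, use that $L^0_1(W)=L^0_{G_1}(W)$, identify the rescaled pre-$G_1$ path as a Brownian bridge independent of $G_1$ and of the meander, extract the Rayleigh law of the bridge local time from L\'evy's joint density of $(W_1,L^0_1(W))$ by conditioning on $\{W_1=0\}$, and then finish with a change of variables and Brownian scaling --- is sound, and the algebra checks out: the product of the arcsine density with the two scaled Rayleigh densities gives $\frac{\ell x}{\pi s^{3/2}(1-s)^{3/2}}e^{-\ell^2/2s}e^{-x^2/2(1-s)}$ at $t=1$, which matches \eqref{tridens-RY}, and the scaling $(|W_t|,L^0_t(W),G_t)\stackrel{\cL}{=}(\sqrt{t}|W_1|,\sqrt{t}L^0_1(W),tG_1)$ transports it correctly to general $t$. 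Two caveats, both minor: first, the facts you invoke --- that conditionally on $G_1=s$ the rescaled pre-$G_1$ path is a standard bridge independent of $s$, and that the meander is independent of $\mathcal{F}_{G_1}$ --- are essentially the content of the very exercise the paper cites, so your proof is a reconstruction of that exercise rather than an alternative to it; second, conditioning L\'evy's density on the null event $\{W_1=0\}$ should be justified by the standard identification of the bridge as a (weakly continuous in the endpoint) regular conditional version of Wiener measure, and you implicitly use that for Brownian motion the symmetric and right local times at $0$ coincide, so L\'evy's identity applies to the symmetric local time used in the paper. With those points made explicit, your proof is complete and arguably more informative than the paper's bare citation.
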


\begin{proof}
See \cite{RY}, Chap. XII, Exercise 3.8.
\end{proof}

\begin{remark}
\label{remark-dens-bro}
Note that, by integrating \eqref{tridens-RY} with respect to $\ell$, and using a symmetry argument we get that
\begin{equation}
\label{dens-bro}
p(t,0,y)=\frac{ |y|}{2\pi}\int_0^t\frac{1}{\sqrt{s}(t-s)^{3/2}}\exp\pare{-\frac{y^2}{2(t-s)}}ds,
\end{equation}
where $p(t,x,y) \defi \frac{1}{\sqrt{2\pi t}}\exp\big(-\frac{(y-x)^2}{2t}\big)$ is the transition density of a Brownian motion.
\end{remark}

\vspace{0.3cm}
\subsection{Transition probability density}

All through the paper the transition probability density of $B^\beta$ will be denoted $p^\beta(s,t ; x,y)$ (we show that it exists).

\vspace{0.2cm}
Let us now give the analytical expression of the function $p^\beta(s,t ; x,y)$. It will be shown later (Section~5) that $p^\beta(s,t ; x,y)$ is a transition probability function (in particular it satisfies the Chapman-Kolmogorov equations), and that the existing strong solution $B^\beta$ of 
\eqref{ISBM} is indeed an inhomogeneous Markov process with transition function $p^\beta(s,t ; x,y)$.

\vspace{0.2cm}
\begin{definition}
\label{def-1}
For all $t>0,y\in\R$, we set
\begin{equation}
\label{density-zero}
p^{{\bf \beta}}(0,t ; 0,y) \defi \frac{|y|}{\pi}\int_{0}^{t}\frac{1+\mathrm{sgn}(y){\bf \beta}(s)}{2}\frac{1}{\sqrt{s}(t-s)^{3/2}}\,{\rm exp}\pare{-\frac{y^2}{2(t-s)}}ds.
\end{equation}
\end{definition}

\vspace{0.2cm}

\begin{remark}
Note that, when ${\bf \beta}(s)\equiv \beta$ is constant, using \eqref{dens-bro} we have
$$
p^\beta(0,t;0,y)=(1+\beta)p(t,0,y)\id{y>0}+(1-\beta)p(t,0,y)\id{y<0}.$$
This is
the density of the SBM starting from zero with skewness parameter $\alpha:=(\beta+1)/2$, given for example in \cite{RY} Chap. III Exercise 1.16 p.87.
\end{remark}

Let us now introduce the shift operator $(\sigma_t)$ acting on time dependent functions as follows:
$${\bf \beta} \circ \sigma_t(s) = {\bf \beta}(t+s).$$ 
Assume for a moment that \eqref{ISBM} has a solution $B^\beta$ which enjoys the strong Markov property and satisfies
$$\P^0(B_t^\beta\in dy)=p^\beta(0,t;0,y)dy.$$

Let $x\neq 0$ be the starting point of $B^\beta$ at time $s$. Let 
$$T_0:=\inf(t\geq s~:~B_t^{\beta}=0).$$
 Since the local time $L^0_.(B^{\beta})$ does not increase until $B^{\beta}$ reaches $0$, the process $B^\beta$, heuristically speaking,
behaves like a Brownian motion on time interval $(s,T_0)$, implying that $\P^{s,x}(T_0\in du)=|x|\exp(-x^2/2(u-s))/\sqrt{2\pi (u-s)^3}$. Then it starts afresh from zero, behaving like an ISBM. Thus, for $t>s$,
\begin{equation}
\label{transition-density1}
\begin{array}{lll}
\P^{s,x}(B^\beta_t\in dy)&=&\displaystyle \P^{s,x}(B^\beta_t\in dy\,;\,s\leq T_0\leq t)+\P^{s,x}(B^\beta_t\in dy\,;\, T_0> t)\\
\\
&=&\displaystyle  dy\int_0^{t-s} \frac{|x| {\rm e}^{-{x^2}/{2u}} }{\sqrt{2\pi u^3}}p^{{\bf \beta}\circ \sigma_s \circ \sigma_u}(0, t-s-u ; 0,y)du\\
\\
&&\displaystyle \,+\,\frac{1}{\sqrt{2\pi (t-s)}}\croc{\exp\pare{-\frac{(y-x)^2}{2(t-s)}} - \exp\pare{-\frac{(y+x)^2}{2(t-s)}} }\id{xy>0}.
\end{array}
\end{equation}
The second line is a consequence of the assumed strong Markov property, while the third line is a consequence of the reflection principle due to the fact that on the event $\{T_0>t\}$ the process $B^\beta$ behaves like a Brownian motion.

But using \eqref{density-zero}, a Fubini-Tonelli argument, a change of variable, and \eqref{dens-bro}, we get~:
\begin{equation}
\label{petit-calcul}
\begin{array}{l}
\displaystyle \int_0^{t-s} \frac{|x| {\rm e}^{-{x^2}/{u}} }{\sqrt{2\pi u^3}}p^{{\bf \beta}\circ \sigma_s \circ \sigma_u}(0, t-s-u ; 0,y)du\\
\\
\displaystyle =\int_{u=0}^{t-s}\int_{r=0}^u
\frac{1+\mathrm{sgn}(y)\beta\circ\sigma_s(u)}{2}\sqrt{\frac{2}{\pi}}\frac{|y|}{(t-(s+u))^{3/2}}e^{-\frac{y^2}{2(t-(s+u))}}\frac{|x|}{2\pi\sqrt{r}(u-r)^{3/2}}e^{-\frac{x^2}{2(u-r)}}dr\,du\\
\\
\displaystyle =\int_0^{t-s}\frac{1+\mathrm{sgn}(y)\beta\circ\sigma_s(u)}{2}\frac{|y|}{\pi}\frac{e^{-\frac{y^2}{2(t-(s+u))}}}{\sqrt{u}(t-s-u)^{3/2}}e^{-x^2/2u}du.
\end{array}
\end{equation}

This leads us to the following definition.

\begin{definition}
\label{definition-2}
For $t>s$, $x,y\in\R$, we set
\begin{equation}
\label{transition-density-complete}
\begin{split}
&p^{{\bf \beta}}(s,t;x,y)\,\,:= \int_0^{t-s}\frac{1+\mathrm{sgn}(y)\beta\circ\sigma_s(u)}{2}\frac{|y|}{\pi}\frac{e^{-\frac{y^2}{2(t-(s+u))}}}{\sqrt{u}(t-s-u)^{3/2}}e^{-x^2/2u}du    \\
&\,+\,\frac{1}{\sqrt{2\pi (t-s)}}\croc{\exp\pare{-\frac{(y-x)^2}{2(t-s)}} - \exp\pare{-\frac{(y+x)^2}{2(t-s)}} }\id{xy>0}.
\end{split}
\end{equation}
\end{definition}

\vspace{0.7cm}

\begin{remark}
Note that in the case of Brownian motion ($\beta\equiv 0$) we have~:
\begin{equation}
\label{dens-bro2}
p(t,x,y)=\int_0^t  \frac{|y|}{2\pi}\frac{e^{-\frac{y^2}{2(t-u)}}e^{-\frac{x^2}{2u}}}{\sqrt{u}(t-u)^{3/2}} du+\,\frac{1}{\sqrt{2\pi t}}\croc{\exp\pare{-\frac{(y-x)^2}{2t}} - \exp\pare{-\frac{(y+x)^2}{2t}} }\id{xy>0}.
\end{equation}
Thus, considering  \eqref{transition-density-complete},
\begin{equation}
\label{transition-density-complete2}
p^\beta(s,t;x,y)=p(t-s,x,y)+\int_0^{t-s}\frac{\beta\circ\sigma_s(u)}{2}\frac{y}{\pi}\frac{e^{-\frac{y^2}{2(t-(s+u))}}}{\sqrt{u}(t-s-u)^{3/2}}e^{-x^2/2u}du. 
\end{equation}
This will be useful in forthcoming computations.
\end{remark}

\begin{remark}
When ${\bf \beta}(s)\equiv \beta$ is constant, $p^\beta(s,t;x,y)$ is just the transition density of the SBM given for example in \cite{RY}.
\end{remark}

\subsection{Main results}

We now state the main results obtained in this paper.

\begin{proposition}
\label{prop-densityzero}
Let $B^\beta$ a weak solution of \eqref{ISBM}.

For all $t>0,y\in\R$, we have
$$
\P^0(B^\beta_t\in dy) = p^\beta(0,t;0,y)dy,$$
where the function $p^\beta(0,t;0,y)$ is explicit in Definition \ref{def-1}.
\end{proposition}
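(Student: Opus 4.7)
The approach is to compute the characteristic function $\phi(t,\lambda) := \E^0[e^{i\lambda B^\beta_t}]$ directly from equation \eqref{ISBM} and then invert it. Fix $\lambda \in \R$ and apply It\^o's formula to the $C^2$ function $x \mapsto e^{i\lambda x}$. Since $dL^0_s(B^\beta)$ is supported on $\{B^\beta_s = 0\}$, one may replace $e^{i\lambda B^\beta_s}$ by $1$ in the local time integral, leading to
\begin{equation*}
e^{i\lambda B^\beta_t} \;=\; 1 \;+\; i\lambda \int_0^t e^{i\lambda B^\beta_s}\, dW_s \;+\; i\lambda \int_0^t \beta(s)\, dL^0_s(B^\beta) \;-\; \frac{\lambda^2}{2}\int_0^t e^{i\lambda B^\beta_s}\, ds.
\end{equation*}
The stochastic integral is a genuine martingale (its integrand has modulus one), so taking $\P^0$-expectation yields an integral equation for $t \mapsto \phi(t,\lambda)$.

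The decisive input is Proposition \ref{prop-fund-Weinryb}: under $\P^0$ one has $|B^\beta| \stackrel{\cL}{\sim} |W|$ as processes, and since the symmetric local time at $0$ of the semimartingale $B^\beta$ coincides with the local time at $0$ of $|B^\beta|$, we obtain $\E^0[L^0_s(B^\beta)] = \E^0[L^0_s(W)]$. Tanaka's formula then gives $\E^0 L^0_s(W) = \E^0|W_s| = \sqrt{2s/\pi}$, hence $\E^0[dL^0_s(B^\beta)] = ds/\sqrt{2\pi s}$. Differentiating the integral equation in $t$, the function $\phi$ solves the linear ODE
\begin{equation*}
\frac{\partial \phi}{\partial t}(t,\lambda) + \frac{\lambda^2}{2}\,\phi(t,\lambda) \;=\; \frac{i\lambda\, \beta(t)}{\sqrt{2\pi t}}, \qquad \phi(0,\lambda)=1,
\end{equation*}
whose unique solution is $\phi(t,\lambda) = e^{-\lambda^2 t/2} + \int_0^t e^{-\lambda^2(t-s)/2}\,i\lambda\,\beta(s)/\sqrt{2\pi s}\, ds$.

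To finish, I invert termwise using the dictionary $e^{-\lambda^2 u/2} \longleftrightarrow p(u,0,y)$ and $i\lambda\, e^{-\lambda^2 u/2} \longleftrightarrow y\, e^{-y^2/2u}/\sqrt{2\pi u^3}$, producing
\begin{equation*}
p(t,0,y) \;+\; \int_0^t \frac{\beta(s)\, y}{2\pi\,\sqrt{s}\,(t-s)^{3/2}}\, e^{-y^2/2(t-s)}\, ds.
\end{equation*}
Substituting the representation \eqref{dens-bro} of $p(t,0,y)$ from Remark \ref{remark-dens-bro} and combining via $|y| + \mathrm{sgn}(y)\, y\, \beta(s) = |y|\,(1 + \mathrm{sgn}(y)\, \beta(s))$ recovers exactly \eqref{density-zero}. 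The main technical subtlety lies in the Fourier inversion itself, since $\phi(t,\cdot)$ is not in $L^1(\R)$: one must either regularise by multiplying by $e^{-\varepsilon\lambda^2/2}$, invert via Fubini, and let $\varepsilon \downarrow 0$, or equivalently compute directly the characteristic function of the candidate density of Definition \ref{def-1} and conclude by uniqueness of characteristic functions. A secondary care is the identification $L^0_\cdot(B^\beta) = L^0_\cdot(|B^\beta|)$, which is precisely what makes the symmetric local time convention natural here.
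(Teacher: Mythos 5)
Your proposal is correct and follows essentially the same route as the paper: Itô's formula applied to $e^{i\lambda x}$ together with $\E^0 L^0_s(B^\beta)=\E^0|W_s|$ (via Proposition \ref{prop-fund-Weinryb} and Tanaka) gives exactly the paper's Lemmas \ref{lemma-hzero} and \ref{lemma-Fourier}, and the expression for $\phi(t,\lambda)$ you obtain coincides with $g_{t,0}(\lambda)$ there. The only (minor) difference is the last step, where you invert termwise and justify it by uniqueness of characteristic functions or Gaussian regularisation, while the paper carries out the inverse Fourier computation directly before recombining with \eqref{dens-bro}; both are fine.
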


The most important results of the paper may be summarized in the following theorem~:

\begin{theorem}
\label{existence-isbm}
Let ${\bf \beta} : {\mathbb R}^+\rightarrow [-1,1]$ a Borel function and $W$ a standard Brownian motion. For any fixed $x\in {\mathbb R}$, there exists a unique (strong) solution to \eqref{ISBM}. It is a (strong) Markov process with transition function $p^\beta(s,t;x,y)$ given by Definition \ref{definition-2}.
\end{theorem}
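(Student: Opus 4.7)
The plan is to combine weak existence with Weinryb's pathwise uniqueness (Theorem~\ref{theo-fund-Weinryb}) so that the Yamada--Watanabe principle delivers strong existence and uniqueness of a strong solution; the Markov property with transition function $p^\beta$ then reduces to showing that the one-dimensional marginals of the weak solution coincide with $p^\beta(s,t;x,\cdot)$ of Definition~\ref{definition-2}, at which point the heuristic computation \eqref{transition-density1}--\eqref{petit-calcul} becomes a genuine proof. The real work is therefore the construction of a weak solution.

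First I would dispose of the case where $\beta$ is smooth. The idea is to remove the local time by a time-dependent change of variables $u_\beta(t,\cdot):\R\to\R$, bijective and piecewise affine in $x$ with a jump in slope at $0$ calibrated by $\beta(t)$, so that under the Peskir time-dependent It\^o--Tanaka formula applied to $Y_t=u_\beta(t,B_t^\beta)$ the term $\beta(s)dL^0_s(B^\beta)$ is killed and $Y$ solves a standard (time-inhomogeneous) SDE driven by $W$. Classical theory yields a strong solution $Y$, and inverting returns $B^\beta=u_\beta(t,\cdot)^{-1}(Y_t)$; pathwise uniqueness from Theorem~\ref{theo-fund-Weinryb} identifies it as the unique strong solution, and Proposition~\ref{prop-densityzero} combined with the strong Markov argument of \eqref{transition-density1} identifies its transition function as $p^\beta$.

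For a general Borel $\beta:\R^+\to[-1,1]$, I would approximate $\beta$ by a monotone sequence $(\beta_n)$ of smooth functions, $|\beta_n|\leq 1$, converging a.e.\ to $\beta$. Each $B^{\beta_n}$ is constructed by the previous step with transition function $p^{\beta_n}$. The uniform-in-$\beta$ Kolmogorov continuity estimate announced in Section~4 gives tightness of $(B^{\beta_n})$ on $\cC(\R^+,\R)$, and from \eqref{transition-density-complete2} together with dominated convergence one has $p^{\beta_n}(s,t;x,y)\to p^\beta(s,t;x,y)$ pointwise. A Chapman--Kolmogorov relation for $p^\beta$ is then inherited from the prelimit, so any subsequential limit $B^\beta$ is an inhomogeneous Markov process with transition function $p^\beta$. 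To identify it as a solution of \eqref{ISBM}, Proposition~\ref{prop-fund-Weinryb} gives $|B^{\beta_n}|\stackrel{\cL}{\sim}|W|$ so that the local times $L^0(B^{\beta_n})$ are tight with a well-defined limit, and L\'evy's characterization recovers the driving Brownian motion in the limit.

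The main obstacle is, as usual for local-time driven SDEs, the passage to the limit in $\int_0^t\beta_n(s)\,dL^0_s(B^{\beta_n})\to\int_0^t\beta(s)\,dL^0_s(B^\beta)$: the random measures $dL^0(B^{\beta_n})$ are singular and carried by a $n$-dependent null set, while $\beta_n$ converges only almost everywhere, so neither factor controls the other directly. Overcoming this requires the monotonicity of $(\beta_n)$, the joint tightness of $(B^{\beta_n},L^0(B^{\beta_n}))$ coming from Proposition~\ref{prop-fund-Weinryb}, and the structural information encoded in the explicit kernel $p^\beta$. Once this convergence is secured, weak existence is established; Weinryb's pathwise uniqueness then upgrades it to the strong Markov process with transition function $p^\beta$ claimed by the theorem.
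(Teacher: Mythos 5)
Your treatment of the smooth case is exactly the paper's: the time-dependent piecewise-affine change of variables $r(t,y)$, Peskir's change-of-variable formula to kill the $\beta(s)\,dL^0_s$ term, Le Gall-type results for the transformed SDE, and Weinryb's pathwise uniqueness (Theorem \ref{theo-fund-Weinryb}) via Yamada--Watanabe. That part is fine.

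The general Borel case, however, contains a genuine gap at precisely the step you label ``the main obstacle.'' Your plan is to take a (subsequential) limit of the processes $B^{\beta_n}$ and to pass to the limit in $\int_0^t\beta_n(s)\,dL^0_s(B^{\beta_n})$, and you assert that monotonicity of $(\beta_n)$, joint tightness of $(B^{\beta_n},L^0(B^{\beta_n}))$ and the explicit kernel ``secure'' this convergence --- but you never give the argument. This is not a routine verification: $\beta_n\to\beta$ only pointwise/a.e.\ in time while the measures $dL^0_s(B^{\beta_n})$ are singular and supported on $n$-dependent zero sets, and the paper itself states explicitly (last Remark of Section 7) that it was \emph{not} able to prove $\int_0^1\check\beta_n(s)\,dL^0_s(B^{\check\beta_n})\to\int_0^1\beta(s)\,dL^0_s(\check Y)$ directly, even with monotone approximations and a.s.\ uniform convergence of the processes. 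So the route you sketch stalls exactly where you stop.

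The paper's actual proof circumvents this limit entirely. The approximation $\beta_n\to\beta$ is used only at the level of the \emph{kernels}: dominated convergence transfers the Chapman--Kolmogorov identity \eqref{chapman-kolmogorov} from $p^{\beta_n}$ to $p^{\beta}$ (Proposition \ref{prop:chapman-kolmogorov}). A continuous Markov process $X^\beta$ with transition function $p^\beta$ is then built directly by the Kolmogorov extension theorem plus the continuity criterion of Proposition \ref{prop-kolmogorov} --- no tightness or weak limit of the $B^{\beta_n}$ is needed. The identification of $X^\beta$ as a weak solution of \eqref{ISBM} (Theorem \ref{solution-faible}) is done purely from the kernel: one computes $\E^0(X^\beta_t\mid\cF^\beta_s)=X^\beta_s+\int_0^{t-s}\beta\circ\sigma_s(u)\,e^{-|X^\beta_s|^2/2u}/\sqrt{2\pi u}\,du$ from \eqref{transition-density-complete2}, shows separately (using that $|X^\beta|$ is a reflected Brownian motion) that $\E^0\big(\int_s^t\beta(u)\,dL^0_u(X^\beta)\mid\cF^\beta_s\big)$ equals the same integral, concludes that $X^\beta_t-\int_0^t\beta(u)\,dL^0_u(X^\beta)$ is a local martingale with bracket $t$, hence a Brownian motion by L\'evy. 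If you want to salvage your tightness-based construction of the limit process, you would still need to replace your local-time convergence step by an identification argument of this kind; as written, the weak existence claim for general $\beta$ is not established.
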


Still, a (very) little more work allows to retrieve the law of $(B^\beta_t,L^0_t(B^\beta),G^\beta_t)$ under $\P^0$.

\begin{theorem}
\label{theo-loi}
For all $t,s>0$, all $\ell\geq 0$ and all $x\in\R$, the  image measure $\P^0[ B^\beta_t\in dx,L_t^0(B^\beta)\in d\ell,G^\beta_t\in ds]$ is given by
\begin{equation}
\label{tridens-inhomo}
\1_{s\leq t}\frac{1+\mathrm{sgn}(x)\beta(s)}{2}\sqrt{\frac{2}{{\pi s^3}}}\,\ell\exp\Big( -\frac{\ell^2}{2s} \Big)\frac{|x|}{\sqrt{2\pi(t-s)^3}}\exp\pare{-\frac{x^2}{2(t-s)}}\,ds\;d\ell\;dx.
\end{equation}
\end{theorem}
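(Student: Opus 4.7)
The plan is to decompose $B^\beta_t = \epsilon\cdot|B^\beta_t|$ with $\epsilon := \mathrm{sgn}(B^\beta_t)\in\{-1,+1\}$, compute separately the joint law of $(|B^\beta_t|, L^0_t(B^\beta), G^\beta_t)$ and the conditional law of $\epsilon$ given this triple, and then combine the two. The first piece comes essentially for free from the preliminaries, while the second piece is where the straddling-excursion description will pay off.

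For the first piece, I would invoke Proposition~\ref{prop-fund-Weinryb}, which gives the process-level equality $(|B^\beta_u|)_{u\geq 0}\stackrel{\cL}{\sim}(|W_u|)_{u\geq 0}$. Since $L^0_t(B^\beta)$ and $G^\beta_t$ are measurable functionals of the path $(|B^\beta_u|)_{u\leq t}$ -- exactly as in Corollary~\ref{egal-loi-triplet} but at a general time $t$ instead of $t=1$ -- this yields
$$(|B^\beta_t|,\,L^0_t(B^\beta),\,G^\beta_t)\;\stackrel{\cL}{\sim}\;(|W_t|,\,L^0_t(W),\,G_t),$$
and by Proposition~\ref{prop-exo-RY}(ii) this joint law has density
$$\1_{s\leq t}\sqrt{\frac{2}{\pi s^3}}\,\ell\,e^{-\ell^2/2s}\,\frac{x}{\sqrt{2\pi(t-s)^3}}\,e^{-x^2/2(t-s)}$$
on $\{x\geq 0,\ \ell\geq 0\}$.

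For the second piece, the claim to prove is that, conditionally on $G^\beta_t=s$, the sign $\epsilon$ is independent of $(|B^\beta_t|, L^0_t(B^\beta))$ and follows the Rademacher law $\cR\bigl((1+\beta(s))/2\bigr)$. Intuitively this is very clean: $\epsilon$ is the sign of the excursion of $B^\beta$ straddling $t$ (a functional of the post-$G^\beta_t$ trajectory), while $L^0_t(B^\beta)=L^0_{G^\beta_t}(B^\beta)$ is a functional of the pre-$G^\beta_t$ trajectory, and $|B^\beta_t|$ depends only on the shape of the straddling excursion, not on its sign. Making this rigorous is exactly the content of the straddling-excursion description developed in Sections~3--4 (following the approach of \cite{BPY} and computing the Azema projection of $B^\beta$ on $(\cF_{G^\beta_t})$); from there the identification of the conditional sign distribution with $\cR\bigl((1+\beta(s))/2\bigr)$ drops out. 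As a sanity check, integrating the claimed trivariate density in $\ell$ recovers exactly the integrand in $p^\beta(0,t;0,y)$ of Proposition~\ref{prop-densityzero}, which already encodes the correct $s$-dependent bias of the sign.

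The combination is then immediate: multiplying the first-piece density by the Rademacher weight $(1+\sigma\beta(s))/2$ for $\epsilon=\sigma\in\{-1,+1\}$, and re-expressing the pair $(\epsilon,|B^\beta_t|)$ as the real-valued $B^\beta_t$ (so that $x\geq 0$ becomes $|x|$ with $x\in\R$ and the weight becomes $(1+\mathrm{sgn}(x)\beta(s))/2$), produces exactly \eqref{tridens-inhomo}. The main obstacle is clearly the second step: rigorously justifying that the sign of the straddling excursion is conditionally Rademacher-distributed with the announced parameter and independent of $(|B^\beta_t|, L^0_t(B^\beta))$ given $G^\beta_t$. This is precisely what the Section~3--4 machinery is designed to produce, so I expect the actual proof to be a very short corollary of it.
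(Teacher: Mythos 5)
Your proposal is correct and rests on the same two pillars as the paper's proof: the identification $(|B^\beta|)\stackrel{\cL}{\sim}(|W|)$ combined with the Brownian trivariate law of Proposition \ref{prop-exo-RY}, and the description of the sign of the straddling excursion from Proposition \ref{prop-loi-jointe-signe-g1-m1}. Where you genuinely diverge is in how the local time enters. The paper proves a dedicated time-inversion lemma (Lemma \ref{lemme-temps-local-bridge}) showing that the rescaled pre-$G^\beta_1$ path is a reflected Brownian bridge independent of $\sigma\{G^\beta_1, B^\beta_{G^\beta_1+u};u\geq 0\}$, deduces $L^0_1(B^\beta)=\sqrt{G^\beta_1}\,\ell^0_1$ with $\ell^0_1$ independent of $(G^\beta_1,\mathrm{sgn}(B^\beta_1),M^\beta_1)$, and reassembles the trivariate law factor by factor; you instead take the whole law of $(|B^\beta_t|,L^0_t(B^\beta),G^\beta_t)$ wholesale from the reflected Brownian motion and only graft the conditional sign onto it, which bypasses the bridge lemma entirely and is a genuine economy. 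The one step you should make explicit is the conditional independence of the sign from $(|B^\beta_t|,L^0_t(B^\beta))$ given $G^\beta_t$: the heuristic that the local time is a pre-$G^\beta_t$ functional and $|B^\beta_t|$ only depends on the shape of the excursion is not by itself a proof, but the Section 4 machinery does deliver it with a short conditioning argument. Indeed $L^0_t(B^\beta)=L^0_{G^\beta_t}(B^\beta)$ is $\cF_{G^\beta_t}$-measurable (the local time being continuous and adapted, hence optional), so \eqref{projection-signe} (valid for every $t$ by Remark \ref{t1-norole}) together with the fact that $\mathrm{sgn}(B^\beta_t)$ is $\{\pm 1\}$-valued identifies its conditional law given $\cF_{G^\beta_t}$ — hence given $(G^\beta_t,L^0_t(B^\beta))$ — as $\cR\pare{(1+\beta(G^\beta_t))/2}$; and the independence of $M^\beta_t$ from $\cF_{G^\beta_t+}$ (established inside the proof of Proposition \ref{prop-loi-jointe-signe-g1-m1}), with $\mathrm{sgn}(B^\beta_t)$, $L^0_t(B^\beta)$ and $G^\beta_t$ all $\cF_{G^\beta_t+}$-measurable, lets you bring $|B^\beta_t|=\sqrt{t-G^\beta_t}\,M^\beta_t$ into the conditioning. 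With that spelled out, multiplying the density \eqref{tridens-RY} by the Rademacher weight gives \eqref{tridens-inhomo} exactly as you describe.
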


\section{Law of the ISBM at a fixed time~:~proof of Proposition \ref{prop-densityzero}}
\label{sec:fourier}
 Let ${\bf \beta} : {\mathbb R}^+\rightarrow [-1,1]$ a Borel function. In this section the stochastic differential equation \eqref{ISBM} is assumed to have a weak solution $B^\beta$. It will be shown later on that this is indeed the case (see Theorem \ref{solution-faible}, Section \ref{sec-existence}).

\subsection{Proof of Proposition \ref{prop-densityzero} from the Fourier transform}
In this part, we note $g_{t,x}(\lambda):={\mathbb E}^{x} \exp \pare{i\lambda B_t^{{{\beta}}}}$ the Fourier transform of 
$B^\beta_t$ starting from $x$ and $h_x(t):={\mathbb E}^x \displaystyle \int_0^t {{\beta}}(s)dL^0_s(B^{{{\beta}}})$.

First, let us collect different results that will be used in the sequel to prove Proposition \ref{prop-densityzero}.

\vspace{0.2cm}

\begin{lemma}
\label{lemma-hzero}
We have 
$$h_0(t)= \frac{1}{\sqrt{2\pi}}\int_0^t \frac{{\beta}(s)}{\sqrt{s}}ds.$$
\end{lemma}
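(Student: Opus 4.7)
The plan is to reduce $h_0(t)$ to a computation involving only the standard Brownian local time, via the identity in law of Proposition \ref{prop-fund-Weinryb}, and then exploit Tanaka's formula for $W$.

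First I would observe that the symmetric local time $L^0_\cdot(B^\beta)$ is a measurable functional of the reflected process $|B^\beta|$: indeed, $L^0_\cdot(B^\beta)$ coincides with the local time at $0$ of the nonnegative semimartingale $|B^\beta|$ (a general fact for continuous semimartingales with symmetric local time, and equivalently obtainable through Tanaka). Since Proposition \ref{prop-fund-Weinryb} asserts that under $\P^0$ the processes $(|B^\beta_s|)_{s\ge 0}$ and $(|W_s|)_{s\ge 0}$ have the same law, the same measurability argument applied to $W$ gives that $(L^0_s(B^\beta))_{s\ge 0}$ and $(L^0_s(W))_{s\ge 0}$ have the same law as increasing processes. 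Since $\beta$ is deterministic, this yields
$$h_0(t)=\E^0\!\left[\int_0^t\beta(s)\,dL^0_s(B^\beta)\right]=\E^0\!\left[\int_0^t\beta(s)\,dL^0_s(W)\right].$$

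Next, Tanaka's formula applied to the standard Brownian motion reads $|W_t|=\int_0^t\mathrm{sgn}(W_s)\,dW_s+L^0_t(W)$; taking $\E^0$ kills the martingale term, so $\E^0[L^0_t(W)]=\E^0|W_t|=\sqrt{2t/\pi}$. Thus the (deterministic) increasing function $t\mapsto \E^0[L^0_t(W)]$ is absolutely continuous with density $1/\sqrt{2\pi s}$.

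Finally I would justify swapping expectation and integral against $dL^0_s(W)$. Since $\beta$ is Borel and bounded by $1$, approximating $\beta$ in pointwise/$L^1$ sense by step functions $\beta_n=\sum_i c_i\,\id{(s_i,s_{i+1}]}$ reduces the identity to the telescoping relation
$$\E^0\!\left[\int_0^t\beta_n(s)\,dL^0_s(W)\right]=\sum_i c_i\bigl(\sqrt{2s_{i+1}/\pi}-\sqrt{2s_i/\pi}\bigr)=\int_0^t\beta_n(s)\,\frac{ds}{\sqrt{2\pi s}},$$
and a dominated convergence argument (valid because $|\beta_n|\le 1$ and $\E^0[L^0_t(W)]<\infty$) delivers
$$h_0(t)=\int_0^t\beta(s)\,\frac{ds}{\sqrt{2\pi s}}=\frac{1}{\sqrt{2\pi}}\int_0^t\frac{\beta(s)}{\sqrt{s}}\,ds.$$
The only step requiring any care is the last Fubini-type exchange, which however is routine given the boundedness of $\beta$ and integrability of the local time; everything else is a direct consequence of Proposition \ref{prop-fund-Weinryb} and Tanaka.
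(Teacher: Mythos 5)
Your proof is correct and takes essentially the same route as the paper: both rest on Proposition \ref{prop-fund-Weinryb} combined with the symmetric Tanaka formula to get $\E^0 L^0_t = \E^0|W_t| = \sqrt{2t/\pi}$, followed by a Fubini-type exchange of expectation and the integral against $dL^0_s$. The only cosmetic difference is that you first transfer the law of the whole local-time process to that of $W$ (via measurability with respect to the reflected path) and apply Tanaka to $W$, whereas the paper applies Tanaka directly to $B^\beta$ and uses only $\E^0|B^\beta_t|=\E^0|W_t|$; your step-function justification of the exchange is simply what the paper compresses into the words ``Fubini's theorem''.
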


\begin{proof}
Using the symmetric Tanaka formula and Proposition \ref{prop-fund-Weinryb} we get
$\E^0(L_t^0(B^\beta))=\E^0|B^\beta_t|=\E^0|W_t|=\sqrt{\frac{2}{\pi}}\sqrt{t}$.

Consequently, we may apply Fubini's theorem and we get that,
\begin{equation*}
h_0(t)={\mathbb E}^0  \int_0^t {{\beta}}(s)dL^0_s(B^{{\beta}}) = \int_0^t {{\beta}}(s)d\pare{{\mathbb E}^0L^0_s(B^{{\beta}})}
 = \frac{1}{\sqrt{2\pi}}\int_0^t \frac{{\beta}(s)}{\sqrt{s}}ds.
\end{equation*}

\end{proof}

\begin{lemma}
\label{lemma-Fourier}
We have for all $\lambda >0$ and $t>0$,
$$
g_{t,0}(\lambda)={ e}^{-\lambda^2 t / 2}\pare{1+  \frac{i\,\lambda}{\sqrt{2\pi}}\int_0^t \frac{{\beta}(s)}{\sqrt{s}}{ e}^{\lambda^2 s / 2}ds}.
$$
\end{lemma}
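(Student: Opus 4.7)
The plan is to identify the Fourier transform $g_{t,0}(\lambda)$ as the unique solution of a first order linear ODE obtained from Itô's formula, and then to solve this ODE explicitly using an integrating factor.

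First I would apply the (complex) Itô formula to the process $e^{i\lambda B_t^\beta}$, starting from $B_0^\beta=0$. Using that $dB_t^\beta = dW_t + \beta(t)\,dL_t^0(B^\beta)$ and the standard fact that $\id{B_s^\beta = 0}\,dL_s^0(B^\beta) = dL_s^0(B^\beta)$, so that $e^{i\lambda B_s^\beta}\,dL_s^0(B^\beta) = dL_s^0(B^\beta)$, I get
\begin{equation*}
e^{i\lambda B_t^\beta} = 1 + i\lambda\int_0^t e^{i\lambda B_s^\beta}\,dW_s + i\lambda\int_0^t \beta(s)\,dL_s^0(B^\beta) - \frac{\lambda^2}{2}\int_0^t e^{i\lambda B_s^\beta}\,ds.
\end{equation*}
Since $|e^{i\lambda B_s^\beta}|=1$, the stochastic integral is a true $\mathbb{P}^0$-martingale; taking expectation under $\P^0$ and invoking Fubini (valid since $|g_{s,0}(\lambda)|\le 1$) together with Lemma \ref{lemma-hzero} yields the integral equation
\begin{equation*}
g_{t,0}(\lambda) = 1 + \frac{i\lambda}{\sqrt{2\pi}}\int_0^t \frac{\beta(s)}{\sqrt{s}}\,ds - \frac{\lambda^2}{2}\int_0^t g_{s,0}(\lambda)\,ds.
\end{equation*}

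Next I would observe that the right-hand side is absolutely continuous in $t$, so $t\mapsto g_{t,0}(\lambda)$ is differentiable a.e.\ and satisfies the linear ODE
\begin{equation*}
\frac{d}{dt}g_{t,0}(\lambda) + \frac{\lambda^2}{2}g_{t,0}(\lambda) = \frac{i\lambda}{\sqrt{2\pi}}\cdot\frac{\beta(t)}{\sqrt{t}}, \qquad g_{0,0}(\lambda)=1.
\end{equation*}
Multiplying by the integrating factor $e^{\lambda^2 t/2}$, we rewrite this as
\begin{equation*}
\frac{d}{dt}\bigl(e^{\lambda^2 t/2} g_{t,0}(\lambda)\bigr) = \frac{i\lambda}{\sqrt{2\pi}}\cdot\frac{\beta(t)}{\sqrt{t}}\,e^{\lambda^2 t/2},
\end{equation*}
and integrating from $0$ to $t$ with $g_{0,0}(\lambda)=1$ produces exactly the announced formula.

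The only delicate step is the expectation-taking: one must check that the local-time integral $\int_0^t \beta(s)\,dL_s^0(B^\beta)$ is truly integrable (ensuring Fubini applies cleanly)---but this is already built into Lemma \ref{lemma-hzero}, whose bound $\E^0 L_t^0(B^\beta)=\sqrt{2t/\pi}$ combined with $|\beta|\le 1$ makes the argument rigorous. Once this is granted, the rest is a mechanical ODE solution and so presents no further obstacle.
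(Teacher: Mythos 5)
Your proof is correct and follows essentially the same route as the paper: Itô's formula yields the first-order integral equation for $g_{t,0}(\lambda)$ (the paper's equation with $h_x(t)$), which is then solved as a linear ODE. The only cosmetic difference is that you substitute the explicit formula of Lemma \ref{lemma-hzero} before solving via an integrating factor, whereas the paper solves first in terms of $h_x(t)$ and then integrates by parts; the computations are identical.
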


\begin{proof}
Applying It\^o's formula ensures that for any fixed $\lambda\in\R$ the process $(g_{t,x}(\lambda))_{t\geq 0}$ is solution of the first order differential equation~:
$$
g_{t,x}(\lambda) = { e}^{i\lambda x} - \frac{\lambda^2}{2}\int_0^t g_{s}(\lambda)ds + i\lambda h_x(t),
$$
 (see \cite{Weinryb} or \cite{RY} Chap. VI Exercise 2.24 p. 246).
Solving formally this equation, we find that for any fixed $\lambda >0$~:
\begin{equation}
\label{eq:caracteristic}
g_{t,x}(\lambda)={ e}^{-\lambda^2 t / 2}\pare{{ e}^{i \lambda x} + i \lambda h_x(t){ e}^{\lambda^2 t / 2} - \frac{i\lambda ^3}{2}\int_0^t h_x(s){ e}^{\lambda^2 s / 2}ds}.
\end{equation}
Integrating by part, taking $x=0$ and using Lemma \ref{lemma-hzero}  we get the announced result.

\end{proof}

\begin{proof}[Proof of Proposition \ref{prop-densityzero}]
In the following computations we note $\ift(g)(z)\defi 2\pi\int_\R g(\lambda)e^{-iz\lambda}d\lambda$ the inverse Fourier transform of a function $g$.  We will sometimes write
$\ift(g(\lambda))(z)$ to make the dependence of $g$ with respect to $\lambda$ explicit.

 We have for $y\in\R$,
\begin{equation*}
\begin{array}{lll}
\displaystyle  p^\beta(0, t ; 0, y)&=&\ift(g_{t,0})(y)\\
\\
&=&\displaystyle \ift(e^{-\lambda^2t/2})(y)+2\pi\int_\R \frac{i\lambda}{\sqrt{2\pi}}\big(\int_0^t\frac{\beta(s)e^{-\lambda^2(t-s)/2}}{\sqrt{s}}ds\big)\,  e^{-iy\lambda}d\lambda\\
\\
&=&\displaystyle p(t,0,y) +  \frac{1}{\sqrt{2\pi}}\int_0^t\beta(s)2\pi\big(\int_\R i\lambda \frac{e^{(i\lambda)^2(t-s)/2}}{\sqrt{s}} e^{-iy\lambda}d\lambda \big)ds   \\
\\
&=&\displaystyle  p(t,0,y) + \frac{1}{\sqrt{2\pi}}\int_0^t\beta(s)\ift\big( i\lambda(t-s) \frac{e^{(i\lambda)^2(t-s)/2}}{\sqrt{s}(t-s)}   \big)(y)ds\\
\\
&=&\displaystyle p(t,0,y) + \frac{1}{\sqrt{2\pi}}\int_0^t\beta(s)\ift\big( \frac{d}{d\lambda}\big( \frac{e^{(i\lambda)^2(t-s)/2}}{\sqrt{s}(t-s)}\big)   \big)(y)ds\\
\\
&=&\displaystyle  p(t,0,y)+ \frac{1}{\sqrt{2\pi}}\int_0^ty\beta(s)\ift\big( \frac{e^{(i\lambda)^2(t-s)/2}}{\sqrt{s}(t-s)}   \big)(y)ds\\
\\
&=&\displaystyle  p(t,0,y)+ \frac{ y}{\sqrt{2\pi}}\int_0^t  \frac{\beta(s)}{\sqrt{s}(t-s)}\ift(e^{-\lambda^2(t-s)/2})(y)ds\\
\end{array}
\end{equation*}
so that
\begin{equation*}
\begin{array}{lll}
p^\beta(0, t ; 0, y)&=&\displaystyle p(t,0,y)+ \frac{ y}{\sqrt{2\pi}}\int_0^t  \frac{\beta(s)}{\sqrt{s}(t-s)}\frac{1}{\sqrt{2\pi(t-s)}}\exp\big(-\frac{y^2}{2(t-s)}\big)ds\\
\\
&=&\displaystyle p(t,0,y) +\ \frac{ y}{2\pi}\int_0^t\frac{\beta(s)}{\sqrt{s}(t-s)^{3/2}}\exp\big(-\frac{y^2}{2(t-s)}\big)ds.
\end{array}
\end{equation*}
Using \eqref{dens-bro}, we get the announced result.
\end{proof}

\subsection{Consequences of Proposition \ref{prop-densityzero}}

\vspace{0.7cm}

\begin{corollary}
\label{cor-loi}
We have, under ${\mathbb P}^0$,
\begin{equation}
B^{\bf \beta}_1 \stackrel{\cL}{\sim}Y\sqrt{1-G_1}\,M_1,
\end{equation}
where $G_1\stackrel{\cL}{\sim}\text{\rm Arcsin}$, $M_1\stackrel{\cL}{\sim}\sqrt{2{\rm \bf e}}$,
$G_1$ and $M_1$ are independent,
and where $Y$ denotes some r.v. independent of $M_1$ satisfying
$$
\cL\pare{Y\,|\,G_1=s}\stackrel{\cL}{\sim} \cR\pare{\frac{1+{\bf \beta}(s)}{2}}.
$$
\end{corollary}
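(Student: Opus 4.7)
The plan is to show equality in law by computing the density of the right-hand side explicitly and comparing it to the expression $p^\beta(0,1;0,y)$ given in Proposition \ref{prop-densityzero}. The key ingredients are already in place: the arcsine density of $G_1$, the Rayleigh-type density of $M_1=\sqrt{2{\bf e}}$ (which is $m\,e^{-m^2/2}$ on $(0,\infty)$), and the conditional law of $Y$ given $G_1$ prescribed by the Rademacher parameter $(1+\beta(s))/2$. Since $M_1$ is independent of $(G_1,Y)$ by assumption, everything factorises nicely once we condition on $G_1$.

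First I would write, for $y>0$ (the case $y<0$ being symmetric),
\begin{equation*}
\P^0\bigl(Y\sqrt{1-G_1}\,M_1 \in dy\bigr) = \int_0^1 \P(Y=1\,|\,G_1=s)\,\P\bigl(\sqrt{1-s}\,M_1 \in dy\bigr)\,\frac{ds}{\pi\sqrt{s(1-s)}}.
\end{equation*}
Using the density of $M_1$, the scaled variable $\sqrt{1-s}\,M_1$ has density $\frac{y}{1-s}\exp\bigl(-\frac{y^2}{2(1-s)}\bigr)$ on $(0,\infty)$, and the conditional probability $\P(Y=1\,|\,G_1=s)$ equals $(1+\beta(s))/2$. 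Substituting gives
\begin{equation*}
\P^0\bigl(Y\sqrt{1-G_1}\,M_1 \in dy\bigr) = \frac{y}{\pi}\,dy\int_0^1 \frac{1+\beta(s)}{2}\,\frac{1}{\sqrt{s}(1-s)^{3/2}}\exp\!\pare{-\frac{y^2}{2(1-s)}}ds,
\end{equation*}
which is exactly $p^\beta(0,1;0,y)\,dy$ from Definition \ref{def-1} with $\mathrm{sgn}(y)=+1$. The case $y<0$ is handled by replacing $(1+\beta(s))/2$ with $(1-\beta(s))/2$ and $y$ with $|y|$, matching $p^\beta(0,1;0,y)$ with $\mathrm{sgn}(y)=-1$.

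Combining both cases with Proposition \ref{prop-densityzero} yields the announced identity in law. There is no genuine obstacle here; the argument is a bookkeeping exercise and the only mildly delicate point is to remember that the sign of $y$ selects the conditional branch of $Y$, which makes the factor $(1+\mathrm{sgn}(y)\beta(s))/2$ appear naturally.
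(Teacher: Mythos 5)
Your proof is correct and follows essentially the same route as the paper: the paper's argument also reduces the claim to Proposition \ref{prop-densityzero} via ``easy computations of conditional expectations'', which is precisely the density-matching with Definition \ref{def-1} that you carry out explicitly (conditioning on $G_1$, using the arcsine law, the Rayleigh density of $M_1$, and the Rademacher conditional law of $Y$). The only cosmetic difference is that the paper additionally records, via Corollary \ref{egal-loi-triplet}, the independence of $G_1^\beta$ and $M_1^\beta$, which is not needed for the literal identity in law you prove.
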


\begin{proof}
First form the result of Corollary \ref{egal-loi-triplet}, we have that 
$$\pare{\sqrt{1-G^\beta_1} ,\,M^\beta_1 } \stackrel{\cL}{\sim}\pare{\sqrt{1-G_1}, M_1}$$
from which we retrieve that $G^\beta_1$ and $M^\beta_1$ are necessarily independent (see Proposition \ref{prop-exo-RY} for the independence between $G_1$ and $M_1$). Furthermore, using Proposition \ref{prop-densityzero} and easy computations of conditional expectations, we can see that under ${\P}^0$,
\begin{equation}
B^{\bf \beta}_1 \stackrel{\cL}{\sim}Y\sqrt{1-G_1}\,M_1,
\end{equation}
where $Y$ is  a random variable independent of $M_1$ satisfying
$$
\cL\pare{Y\,|\,G_1=s}\stackrel{\cL}{\sim} \cR\pare{\frac{1+{\bf \beta}(s)}{2}}.
$$
\end{proof}
 \begin{remark}
We have
 $$B_1^\beta=\mathrm{sgn}(B^\beta_1)\sqrt{1-G^\beta_1}\,M^\beta_1  \stackrel{\cL}{\sim}Y\sqrt{1-G_1}\,M_1$$
  with $G^\beta_1\stackrel{\cL}{\sim}G_1$
and $M^\beta_1\stackrel{\cL}{\sim}M_1$ and $Y$ constructed as above. Unfortunately, this is not enough to deduce the conditional law of $\mathrm{sgn}(B^\beta_1)$ w.r.t $(G_1^\beta, M_1^\beta)$. The result is completed in Proposition \ref{prop-loi-jointe-signe-g1-m1} below.
\end{remark}

\section{Last exit from $0$ before time $1$ and Markov property}
\label{sec-Azema-Markov}
Let us recall that in equation \eqref{ISBM}, we work with the symmetric ${\rm sgn}(.)$ function, satisfying ${\rm sgn}(0)=0$.

We now assume that $B^\beta$ is a strong solution of \eqref{ISBM} and that $\pare{{\cal F}_t}$ denotes the Brownian filtration of the Brownian motion $W$.

Recall also the definitions of 
\begin{itemize}
\item ${\cal F}_{G_1^\beta}$, the $\sigma$-algebra generated by the variables $H_{G_1^\beta}$, where $H$ ranges through all the $\pare{{\cal F}_t}$ optional (and thus predictable) processes (see \cite{RY}, Chap. XII p.488).
\item ${\cal F}_{G_1^\beta +}$, the $\sigma$-algebra generated by the variables $H_{G_1^\beta}$, where $H$ ranges through all the $\pare{{\cal F}_t}$ progressively measurable processes (see \cite{BPY}).
\end{itemize}

Throughout the section, all equalities involving conditional expectations have to be understood with the restriction that they hold only ${\mathbb P}$-almost surely. We will not precise it in our statements.
\subsection{Azema's projection of the ISBM}
\label{sec-azema}

\begin{proposition}
\label{prop-loi-jointe-signe-g1-m1}
We have, under ${\mathbb P}^0$,
\begin{equation}
\pare{{\rm sgn}(B^\beta_1), G_1^ \beta, M^\beta_1}\stackrel{\cL}{\sim}\pare{Y, G_1, M_1},
\end{equation}
where $G_1\stackrel{\cL}{\sim}\text{\rm Arcsin}$, $M_1\stackrel{\cL}{\sim}\sqrt{2{\rm \bf e}}$,
$G_1$ and $M_1$ are independent,
and where $Y$ denotes some r.v. independent of $M_1$ satisfying
$$
\cL\pare{Y\,|\,G_1=s}\stackrel{\cL}{\sim} \cR\pare{\frac{1+{\bf \beta}(s)}{2}}.
$$
moreover, in fact
\begin{equation}
\label{projection-signe}
{\mathbb E}^0\pare{{\rm sgn}(B^\beta_1)~|~{\cal F}_{G_1^\beta}} = \beta(G_1^\beta).
\end{equation}
\end{proposition}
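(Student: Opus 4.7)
The plan is to split the argument in two steps: Step~1 identifies the full joint law of $({\rm sgn}(B_1^\beta), G_1^\beta, M_1^\beta)$, and Step~2 upgrades this to the Azéma-type projection formula.

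\textbf{Step 1 (joint law).} By Corollary~\ref{egal-loi-triplet}, $(G_1^\beta, M_1^\beta) \stackrel{\cL}{\sim} (G_1, M_1)$, so Proposition~\ref{prop-exo-RY} gives the announced marginals and their independence, and $(G_1^\beta, |B_1^\beta|)$ shares with $(G_1, |W_1|)$ the joint density
$$\rho(s,y) \;=\; \frac{y}{\pi \sqrt{s}(1-s)^{3/2}}\exp\Big(-\frac{y^2}{2(1-s)}\Big),\qquad (s,y)\in(0,1)\times(0,\infty).$$
Introduce a regular version $q_+(s,y) := \P^0({\rm sgn}(B_1^\beta) = 1 \mid G_1^\beta = s, |B_1^\beta| = y)$. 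Conditioning on $(G_1^\beta, |B_1^\beta|)$, for any bounded Borel $\phi:\R\to\R$,
$$\E^0\phi(B_1^\beta) \;=\; \int_0^1\!\!\int_0^\infty \bigl[q_+(s,y)\phi(y)+(1-q_+(s,y))\phi(-y)\bigr]\,\rho(s,y)\,dy\,ds.$$
On the other hand, by Proposition~\ref{prop-densityzero} and splitting the $y$-integral into $\{y>0\}$ and $\{y<0\}$,
$$\E^0\phi(B_1^\beta) \;=\; \int_0^1\!\!\int_0^\infty \Big[\frac{1+\beta(s)}{2}\phi(y)+\frac{1-\beta(s)}{2}\phi(-y)\Big]\rho(s,y)\,dy\,ds.$$
Choosing first $\phi$ supported in $\R_+$ and then $\phi$ supported in $\R_-$ forces $q_+(s,y) = (1+\beta(s))/2$ for $\rho$-a.e.\ $(s,y)$. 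Since the result does not depend on $y$, ${\rm sgn}(B_1^\beta)$ is conditionally independent of $|B_1^\beta|$ (hence of $M_1^\beta = |B_1^\beta|/\sqrt{1-G_1^\beta}$) given $G_1^\beta$, and its conditional law is $\cR\bigl((1+\beta(G_1^\beta))/2\bigr)$. Combined with $G_1^\beta \perp M_1^\beta$ this delivers the joint-law claim.

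\textbf{Step 2 (Azéma projection).} The identity follows from the stronger conditional independence
$${\rm sgn}(B_1^\beta) \;\perp\; \cF_{G_1^\beta}\;\bigm|\; G_1^\beta,$$
since under this property Step~1 gives $\E^0[{\rm sgn}(B_1^\beta)|\cF_{G_1^\beta}] = \E^0[{\rm sgn}(B_1^\beta)|G_1^\beta] = \beta(G_1^\beta)$. To establish the conditional independence, following the method of \cite{BPY} developed for Walsh's Brownian motion, one tests against random variables of the form $H_{G_1^\beta}$ where $H$ ranges over bounded $(\cF_t)$-predictable processes. The underlying picture is that the excursion straddling time~$1$ begins at $G_1^\beta$ with $B^\beta_{G_1^\beta}=0$, and its sign is "renewed" with probability $(1+\beta(G_1^\beta))/2$, independently of the trajectory prior to $G_1^\beta$.

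\textbf{Main obstacle.} The principal difficulty is that $G_1^\beta$ is a last-exit time, \emph{not} a stopping time, so the strong Markov property is unavailable at $G_1^\beta$. The remedy is either to invoke excursion theory directly (under $\P^0$, the zero set of $B^\beta$ inherits its regenerative structure from that of $|W|$ via Proposition~\ref{prop-fund-Weinryb}), or to approximate by the stopping times $D_t^\beta := \inf\{s \ge t : B^\beta_s = 0\}$ for $t \in \Q \cap [0,1)$, at which the strong Markov property \emph{does} apply, and then pass to the limit to recover the sign of the excursion whose left endpoint is $G_1^\beta$.
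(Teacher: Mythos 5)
There is a genuine gap, and it sits exactly at the point the paper itself flags (see the remark following Corollary \ref{cor-loi}). In your Step~1 the only test functions available are $\phi(B_1^\beta)$, i.e.\ functions of a single real variable. Equating your two integral representations therefore only yields, for Lebesgue-a.e.\ $y>0$,
\begin{equation*}
\int_0^1 q_+(s,y)\,\rho(s,y)\,ds \;=\; \int_0^1 \frac{1+\beta(s)}{2}\,\rho(s,y)\,ds ,
\end{equation*}
that is, one scalar constraint per $y$ on the whole function $s\mapsto q_+(s,y)$. This does \emph{not} force $q_+(s,y)=(1+\beta(s))/2$ $\rho$-a.e.: the conclusion would follow only if one already knew that $q_+$ does not depend on $y$ (then a Laplace-transform/completeness argument in the variable $1/(2(1-s))$ would identify it), but the absence of $y$-dependence is equivalent to the conditional independence of ${\rm sgn}(B_1^\beta)$ and $M_1^\beta$ given $G_1^\beta$ --- which is precisely part of what has to be proved. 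Your sentence ``Since the result does not depend on $y$\dots'' is therefore circular. The marginal law of $B_1^\beta$ (Proposition \ref{prop-densityzero}) together with the law of $(G_1^\beta,|B_1^\beta|)$ simply does not determine the conditional law of the sign given $(G_1^\beta, M_1^\beta)$, let alone given $\cF_{G_1^\beta}$.

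Step~2 then defers exactly the substantive work. The conditional independence ${\rm sgn}(B_1^\beta)\perp\cF_{G_1^\beta}\mid G_1^\beta$ is asserted with an appeal to ``testing against $H_{G_1^\beta}$'' and to excursion/regeneration heuristics, but no argument is given, and the proposed remedies (zero-set regeneration, approximation by the stopping times $D_t^\beta$) are nontrivial precisely because the excursion signs of the inhomogeneous process are not obviously independent marks. The paper's proof does not establish that conditional independence directly; instead it (a) uses the balayage formula with arbitrary predictable $H$ to show that $H_{G_t^\beta}({\rm sgn}(B_t^\beta)-\beta(G_t^\beta))|B_t^\beta|$ is a martingale, giving ${\mathbb E}^0[{\rm sgn}(B_1^\beta)M_1^\beta\mid\cF_{G_1^\beta}]=\sqrt{\pi/2}\,\beta(G_1^\beta)$; (b) proves via Jeulin's progressive enlargement (Az\'ema supermartingale $\Phi(|B_t^\beta|/\sqrt{1-t})$, the SDE for the post-$G_1^\beta$ absolute value, Brownian scaling) that $M_1^\beta$ is independent of $\cF_{G_1^\beta+}$; and (c) notes that ${\rm sgn}(B_1^\beta)$ is $\cF_{G_1^\beta+}$-measurable, so the product in (a) factorizes and yields \eqref{projection-signe}, which in turn pins down the conditional law of the sign since it is $\{\pm1\}$-valued. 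None of these three ingredients appears in workable form in your proposal, so both the joint-law claim and the projection formula remain unproven as written.
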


\begin{remark}
 Notice that in particular ${\rm sgn}(B^\beta_1)$ is independent of $M^\beta_1$.

\end{remark}

\begin{proof}
Let $H$ denote an arbitrary real bounded $\pare{{\cal F}_s}$ predictable process.
The balayage formula implies on the one hand that
\begin{equation*}
\begin{split}
H_{G_t^\beta}\beta(G_t^\beta)|B^\beta_t | &= \int_0^t H_{G_u^\beta}\beta(G_u^\beta){\rm sgn}(B^\beta_{u})dW_u   \\
 &\;\;\;+ \int_0^t H_{G_u^\beta}\beta(G_u^\beta) dL^0_u(B^\beta).
\end{split}
\end{equation*}
On another hand it implies that 
\begin{equation*}
\begin{split}
H_{G_t^\beta}B^\beta_t &= \int_0^t H_{G_u^\beta}dB^\beta_u\\
&= \int_0^t H_{G_u^\beta}dW_u + \int_0^t H_{G_u^\beta}\beta(G_u^\beta)dL^0_u(B^\beta).
\end{split}
\end{equation*}
Making the difference, we see that
\begin{equation*}
\begin{split}
H_{G_t^\beta}B^\beta_t - H_{G_t^\beta}\beta(G_t^\beta)|B^\beta_t | &=  \int_0^t H_{G_u^\beta}dW_u - \int_0^t H_{G_u^\beta}\beta(G_u^\beta){\rm sgn}(B^\beta_{u})dW_u.
\end{split}
\end{equation*}

Thus, the process 
$$
\left \{H_{G_t^\beta} \pare{{\rm sgn}(B^\beta_t)- \beta(G_t^\beta)}|B^\beta_t | ~:t\geq 0~\right \}
$$
is a square integrable $({\cal F}_t)$ martingale.
In particular, we have that
$$
{\mathbb E}^0\pare{H_{G_t^\beta}{\rm sgn}(B^\beta_t)M^\beta_t\sqrt{t-G_t^\beta}}=  {\mathbb E}^0\pare{H_{G_t^\beta}\beta(G_t^\beta)\sqrt{\frac{\pi}{2}(t-G_t^\beta)}}.
$$
And since this equality is satisfied for all predictable process $H$,
\begin{equation}
\label{si-c-vrai-c-beau}
\begin{split}
{\mathbb E}^0\pare{{\rm sgn}(B^\beta_t)M^\beta_t~|~{\cal F}_{G_t^\beta}} &=\sqrt{\frac{\pi}{2}}\beta(G_t^\beta).
\end{split}
\end{equation}
This proves that ${\rm sgn}(B^\beta_t)$ and $M^\beta_t$ are conditionally  uncorrelated. However,  even though ${\rm sgn}(B^\beta_t)$ takes only values in $\{-1,1\}$ $\P^0$-a.s., this equality is not enough to deduce the conditional law ${\cal L}\pare{{\rm sgn}(B^\beta_t)~|~\sigma(G_t^\beta)}$ and we have to work a little more. 
In the following, we follow the lines of the article \cite{BPY} p.290.

Let $\pare{{\cal H}_t}$ the smallest right-continuous enlargement of $\pare{{\cal F}_t}$ such that $G_1^\beta$ becomes a stopping time. Then, according to 
Jeulin \cite{Jeulin} p.77 and the exchange formula, we have
\begin{equation}
\label{eq:germ}
{\cal H}_{G_1^\beta} = {\cal F}_{G_1^\beta +} = \sigma\pare{{\cal F}_{G_1^\beta}} \vee \bigcap_{n\geq {\mathbb N}^\ast} \sigma\pare{W_{G_1^\beta + u}~:~0\leq u\leq\frac{1}{n}}.
\end{equation}
Define for $\varepsilon \in (0,1)$, $G_1^{\beta,{\varepsilon}} = G^ \beta_{1} + \varepsilon (1- G^ \beta_{1})$~;~this is a family of $\pare{{\cal H}_t}$ stopping times, such that~:~${\cal H}_{G_1^{\beta,{\varepsilon}}}={\cal F}_{G_1^{\beta,{\varepsilon}}}$ (see again \cite{Jeulin}). Moreover, since $\pare{{\cal H}_t}$ is right-continuous, we have~:~
$$
{\cal F}_{G_1^\beta +} = {\cal H}_{G_1^\beta } = \bigcap_{\varepsilon \in (0,1)}{\cal F}_{G_1^{\beta,{\varepsilon}}}.
$$
We now proceed to show that $M_1^\beta$ is independent from ${\cal H}_{G_1^\beta}$. We first remark that the $\pare{{\cal F}_t}$ submartingale ${\mathbb P}\pare{G_1^\beta < t~|~{\cal F}_t}$ (for $t<1$) can be computed explicitly using the Theorem \ref{prop-fund-Weinryb}. We easily find that
$$
{\mathbb P}\pare{G_1^\beta < t~|~{\cal F}_t} = \Phi\pare{\frac{|B_t^\beta|}{\sqrt{1-t}}}
$$
where $\Phi(y):=\displaystyle \sqrt{\frac{2}{\pi}}\int_0^y\exp\pare{-\frac{x^2}{2}}dx$. We deduce from this, using the explicit enlargement formulae that~:~
\begin{equation}
\label{eq:sde-f-grossie}
\begin{split}
&\pare{|B^\beta_{G_1^\beta+u}| -  L^0_{G_1^\beta + u}(B^\beta)} - \pare{|B^\beta_{G_1^\beta}| - L^0_{G_1^\beta}(B^\beta)}\\
&= \vartheta_u + \int_0^u \frac{ds}{\sqrt{1- (G_1^\beta + s)}}\pare{\frac{\Phi '}{\Phi}}\pare{\frac{|B_{G_1^\beta + s}^\beta|}{\sqrt{1-(G_1^\beta + s)}}},\hspace*{0,4 cm}\text{for}\;\;u<1-G_1^\beta,
\end{split}
\end{equation}
where $\{\vartheta_u~:~u\geq 0\}$ is a $\pare{{\cal H}_{G_1^\beta +u},\,u\geq 0}$ Brownian motion, so that  $\{\vartheta_u~:~u\geq 0\}$ is independent from
${\cal H}_{G_1^\beta}$.

Note that $B^\beta_{G_1^\beta}=0$ and $\displaystyle  L^0_{G_1^\beta + u}(B^\beta) =  L^0_{G_1^\beta}(B^\beta)$ for $0\leq u<1-G_1^\beta$.  

Using Brownian scaling, we deduce that
\begin{equation}
\begin{split}
m^\beta_v & = \gamma_v + \int_0^v \frac{dh}{\sqrt{1- h}}\pare{\frac{\Phi '}{\Phi}}\pare{\frac{m_{h}^\beta}{\sqrt{1-h}}}\hspace*{0,4 cm}\text{for}\;\;v<1,
\end{split}
\end{equation}
where $\gamma_v := \frac{1}{\sqrt{1-G_1^\beta}}\vartheta_{(1-G_1^\beta)v}$ is again a Brownian motion which is independent from ${\cal H}_{G_1^\beta}$
and $m^\beta_v := \frac{|B^\beta_{G_1^\beta + v(1-G_1^\beta)}|}{\sqrt{1-G_1^\beta}}$. 

From this, we deduce that $\{m^\beta_v~:~v< 1\}$ is the unique strong solution of a SDE driven $\pare{\gamma_v}$. Consequently, $\{m^\beta_v~:~v< 1\}$ is independent of ${\cal H}_{G_1^\beta}$ and by continuity of $(m^\beta_v)_{0\leq v\leq 1}$ so is
$m^\beta_1 := M_1^\beta$.

From the fact that $B^\beta$ is a $\pare{{\cal F}_t}$ predictable process and (\ref{eq:germ}) (\ref{eq:sde-f-grossie}), we deduce that 
$$\bigcap_{n\geq {\mathbb N}^\ast} \sigma\pare{B^\beta_{G_1^\beta + u}~:~0\leq u\leq\frac{1}{n}}\subseteq {\cal F}_{G_1^\beta +}$$ 
and thus, since ${\rm sgn}(B^\beta_1) = {\rm sgn}(B^\beta_{G_1^\beta + 1/n})$ for all $n> 0$, the random variable  ${\rm sgn}(B^\beta_1)$ is ${\cal F}_{G_1^\beta +}$ measurable. So that,
\begin{equation*}
\label{si-c-vrai-c-beau-2}
\begin{split}
{\mathbb E}^0\pare{{\rm sgn}(B^\beta_1)M^\beta_1~|~{\cal F}_{G_1^\beta}} &={\mathbb E}^0\pare{{\mathbb E}^0\pare{{\rm sgn}(B^\beta_1)M^\beta_1~|~{\cal F}_{G_1^\beta +}}~|~{\cal F}_{G_1^\beta}}\\
&={\mathbb E}^0\pare{M^\beta_1}{\mathbb E}^0\pare{{\rm sgn}(B^\beta_1)~|~{\cal F}_{G_1^\beta}}\\
&=\sqrt{\frac{\pi}{2}}{\mathbb E}^0\pare{{\rm sgn}(B^\beta_1)~|~{\cal F}_{G_1^\beta}},
\end{split}
\end{equation*}
and identifying with (\ref{si-c-vrai-c-beau}) ensures that
$$
{\mathbb E}^0\pare{{\rm sgn}(B^\beta_1)~|~{\cal F}_{G_1^\beta}} = \beta(G_1^\beta) \pare{={\mathbb E}^0\pare{{\rm sgn}(B^\beta_1)~|~\sigma(G_1^\beta)}}.
$$
\end{proof}
\begin{remark}
\label{t1-norole}
The time $t=1$ plays no role in the above reasoning so the relation ${\mathbb E}^0\pare{{\rm sgn}(B^\beta_t)~|~{\cal F}_{G_t^\beta}} = 
\beta(G_t^\beta) $ holds also for any time $t$. This proves that, up to a modification, the dual predictable projection of the process 
$({\rm sgn}(B^\beta_t))_{t\geq 0}$ on the  filtration $({\cal F}_{G_t^\beta})$ is given by the process $(\beta(G_t^\beta))_{t\geq 0}$. 

This means that the fundamental equation of the Inhomogeneous Skew Brownian motion may be re-interpreted like forcing with $\beta$ a prescribed $({\cal F}_{G^ \beta_t})$-predictable projection for $({\rm sgn} (B^\beta_t))_{t\geq 0}$ in the following equation
\begin{equation}
\left \{
\begin{array}{l}
B^\beta_t = W_t + \int_0^t {}^{\rm p}\pare{{\rm sgn} (B^\beta_s)}dL_s^0(B^\beta)\\
{}^{\rm p}\pare{{\rm sgn} (B^\beta_t)} = \beta(G^\beta_t),
\end{array}
\right .
\end{equation}
where ${}^{\rm p}\pare{Y_.}$ is a notation for the $({\cal F}_{G^ \beta_t})$ predictable projection of the measurable process $Y$.
\end{remark}


\subsection{Markov property}
\label{sec-markov}

Using the results of the previous section, we may show that the inhomogeneous skew Brownian motion $B^\beta$ is a Markov process.
Indeed, even in the homogeneous case, the Markov property for the existing solution $B^\beta$, is up to our knowledge a non trivial 
question (see \cite{zait1,zait2,kulik,hajri,burdzy,maire-champagnat-10}).

\begin{proposition}
\label{prop-markov-process}
Let $f$ be a positive Borel function, then
\begin{equation*}
{\mathbb E}^x \pare{f(B^\beta_t) | {\cal F}_s} = \int_{-\infty}^\infty dy f(y) p^{\beta}(s,t ; B^\beta_s, y).
\end{equation*}
\end{proposition}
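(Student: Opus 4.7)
The idea is to reverse the formal computation that motivated Definition~\ref{definition-2}: condition on the first return of $B^\beta$ to zero after time $s$, and identify the two contributions (reflection principle before $\tau_0$, Proposition~\ref{prop-densityzero} after $\tau_0$) with the two summands of $p^\beta(s,t;x,y)$. The two critical inputs are Proposition~\ref{prop-densityzero} and pathwise uniqueness (Theorem~\ref{theo-fund-Weinryb}), the latter being what allows us to recognise the post-$\tau_0$ trajectory as a canonical ISBM from zero with a shifted coefficient.

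Fix $0 \leq s < t$ and set $\tilde W_v := W_{s+v} - W_s$, $\tilde B_v := B^\beta_{s+v}$. Then $\tilde B$ satisfies \eqref{ISBM} with driver $\tilde W$, coefficient $\beta \circ \sigma_s$ and starting point $\tilde B_0 = B^\beta_s$; moreover $\tilde W$ is a Brownian motion independent of $\mathcal{F}_s$. Pathwise uniqueness implies that the conditional law of $\tilde B_{t-s}$ given $\mathcal{F}_s$ depends only on $B^\beta_s$, so it suffices to show that for every deterministic $z \in \R$, an ISBM with coefficient $\beta \circ \sigma_s$ started from $z$ at time $0$ has density $p^\beta(s,t;z,\cdot)$ at time $t-s$. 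When $z=0$ this follows directly from Proposition~\ref{prop-densityzero} applied to $\tilde B$, since the density $p^{\beta \circ \sigma_s}(0,t-s;0,y)$ it yields coincides with $p^\beta(s,t;0,y)$ in Definition~\ref{definition-2} (the absorbed-Brownian summand vanishing at $x=0$).

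Assume now $z \neq 0$ and let $\tau_0 := \inf\{v > 0 : \tilde B_v = 0\}$. Since $L^0_\cdot(\tilde B)$ does not grow on $[0,\tau_0)$, one has $\tilde B_v = z + \tilde W_v$ for $v \leq \tau_0$. On $\{\tau_0 > t-s\}$ the reflection principle for BM started at $z$ and absorbed at $0$ gives
\begin{equation*}
\P(\tilde B_{t-s} \in dy,\, \tau_0 > t-s) = \frac{1}{\sqrt{2\pi(t-s)}}\Bigl[e^{-(y-z)^2/2(t-s)} - e^{-(y+z)^2/2(t-s)}\Bigr] \id{zy > 0}\, dy,
\end{equation*}
which is exactly the second summand of Definition~\ref{definition-2}. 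On $\{\tau_0 \leq t-s\}$ the strong Markov property of $\tilde W$ at $\tau_0$ shows that $(\tilde W_{\tau_0+u} - \tilde W_{\tau_0})_{u \geq 0}$ is a BM independent of $\mathcal{F}_{\tau_0}$, so that $(\tilde B_{\tau_0+u})_{u \geq 0}$ solves \eqref{ISBM} from $0$ with coefficient $\beta \circ \sigma_s \circ \sigma_{\tau_0}$. Invoking pathwise uniqueness and Proposition~\ref{prop-densityzero} conditionally on $\tau_0$,
\begin{equation*}
\P(\tilde B_{t-s} \in dy \mid \mathcal{F}_{\tau_0}) = p^{\beta \circ \sigma_s \circ \sigma_{\tau_0}}(0,\, t-s-\tau_0;\, 0,\, y)\, dy \quad \text{on } \{\tau_0 \leq t-s\}.
\end{equation*}
Combining with the hitting-time density $\P(\tau_0 \in du) = |z|(2\pi u^3)^{-1/2} e^{-z^2/2u}\, du$ and the Fubini-Tonelli manipulation already performed in~\eqref{petit-calcul}, this contribution matches the first summand of Definition~\ref{definition-2}, and adding both contributions produces $p^\beta(s,t;z,y)\,dy$. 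The delicate point is precisely this restart step: without pathwise uniqueness one would only know that the post-$\tau_0$ trajectory satisfies an ISBM-equation, not that its law is the one provided by Proposition~\ref{prop-densityzero}.
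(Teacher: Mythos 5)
Your route is the ``restart at the first zero after $s$'' strategy, i.e.\ an attempt to make rigorous the heuristic computation \eqref{transition-density1}--\eqref{petit-calcul}; the paper instead conditions on the \emph{last} zero before $t$: it uses the Azema projection result of Proposition \ref{prop-loi-jointe-signe-g1-m1} (independence of the meander from $\cF_{G_t^\beta}$ and the conditional Rademacher law of the sign), the dual predictable projection of last-exit times of the reflected Brownian motion $|B^\beta|$, and the Markov property of $|B^\beta|$. Only your term on $\{\tau_0>t-s\}$, which is the paper's term on $\{G^\beta_t\le s\}$ (the two events coincide), is treated identically via the killed-Brownian density. The paper's choice is not cosmetic: its proof uses nothing beyond the one given solution $B^\beta$, while your two conditioning steps require inputs that are not available at this point of the paper.

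Concretely, there are two gaps. First, ``pathwise uniqueness implies that the conditional law of $\tilde B_{t-s}$ given $\cF_s$ depends only on $B^\beta_s$'' does not follow from pathwise uniqueness alone: the standard Yamada--Watanabe-type argument also needs existence of a solution of the shifted equation from \emph{every} deterministic starting point $z$ (your very next sentence already presupposes such an ISBM exists), together with a jointly measurable strong-solution functional in $(z,w)$ so that $\tilde B=F(B^\beta_s,\tilde W)$ and a freezing argument applies. For a general Borel $\beta$ this existence is established only in Section 5 (Theorem \ref{solution-faible}), and that construction itself relies on the present proposition through Proposition \ref{prop:chapman-kolmogorov}; so in the generality in which the proposition is stated, your reduction is circular unless you restrict to the smooth setting of Proposition \ref{exist-C1} or reorganize the paper's logic. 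Second, the restart step ``invoking pathwise uniqueness and Proposition \ref{prop-densityzero} conditionally on $\tau_0$'' is only gestured at: the coefficient $\beta\circ\sigma_s\circ\sigma_{\tau_0}$ is random, so one needs a regular conditional probability given $\cF_{s+\tau_0}$ under which the shifted driver is still a Brownian motion, the equation holds with the frozen coefficient $\beta\circ\sigma_{s+u}$, the symmetric local time of the shifted path is still its local time under the conditional law, and the resulting conditional densities are measurable in $u$ so that they can be integrated against the hitting-time law; none of this is carried out. Moreover pathwise uniqueness is not the missing ingredient there: Proposition \ref{prop-densityzero} applies to \emph{any} weak solution started from $0$, so the whole difficulty is to prove that the conditional law of the post-$\tau_0$ path is that of a weak solution with the frozen coefficient --- a restart (strong Markov-type) property of $B^\beta$ that is close in spirit to what you are trying to prove, and exactly what the paper's last-exit/Azema-projection argument was designed to avoid.
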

\begin{proof}

In the following computations, we will use various times the Fubini-Tonelli theorem, which is justified since we are dealing with positive integrable integrands. 

\begin{equation*}
\begin{split}
{\mathbb E}^x \pare{f(B^\beta_t) | {\cal F}_s}&={\mathbb E}^x \pare{f(B^\beta_t)\id{G^\beta_t \leq s} | {\cal F}_s} + {\mathbb E}^x \pare{f(B^\beta_t)\id{G^\beta_t > s} | {\cal F}_s}.
\end{split}
\end{equation*}

Let $H_s$ denote some ${\cal F}_s$ measurable random variable.
We have~:
\begin{equation*}
\begin{split}
&{\mathbb E}^x \pare{H_s\, f(B^\beta_t)\id{G^\beta_t > s}}\,={\mathbb E}^x \croc{H_s \,f\pare{{\rm sgn}\pare{B^\beta_t}\frac{|B^\beta_t|}{\sqrt{t-G^\beta_t}}\sqrt{t-G^\beta_t}}\id{G^\beta_t > s}}\\
&={\mathbb E}^x \croc{{\mathbb E}^x\croc{H_s \,f\pare{{\rm sgn}\pare{B^\beta_t}\frac{|B^\beta_t|}{\sqrt{t-G^\beta_t}}\sqrt{t-G^\beta_t}}\id{G^\beta_t > s} | {\cal F}_{G_t^\beta}}}.
\end{split}
\end{equation*}
Since the process $\pare{H_s\,\id{u > s}}_{u\geq 0}$ is $\pare{{\cal F}_u}$ predictable for any fixed time $s$, the random variable $H_s\id{G^\beta_t > s}$ is ${\cal F}_{G_t^\beta}$ measurable by definition of ${\cal F}_{G_t^\beta}$. So that
\begin{equation*}
\begin{split}
&{\mathbb E}^x \pare{H_s\, f(B^\beta_t)\id{G^\beta_t > s}}\,={\mathbb E}^x \croc{H_s\id{G^\beta_t > s} \,{\mathbb E}^x\croc{f\pare{{\rm sgn}\pare{B^\beta_t}\frac{|B^\beta_t|}{\sqrt{t-G^\beta_t}}\sqrt{t-G^\beta_t}} | {\cal F}_{G_t^\beta}}}\\
&={\mathbb E}^x \croc{H_s\id{G^\beta_t > s}\sum_{\delta\in \{ -1,1\}}\frac{1+\delta\beta(G_t^\beta)}{2}\int_{-\infty}^\infty f\pare{\delta\sqrt{t-G_t^\beta}y}\mu^0(dy)}
\end{split}
\end{equation*}
where we have used the fact that $\frac{|B^\beta_t|}{\sqrt{t-G^\beta_t}}$ is independent of ${\cal F}_{G_t^\beta}$ and ${\rm sgn}(B^\beta_t)$, and $\mu^0(dy)$ stands for the law of $\sqrt{2 {\rm \bf e}}$ (see Proposition \ref{prop-loi-jointe-signe-g1-m1}).

Note that the process $\left\{H_s K^s_u(f):=H_s\id{u > s}\sum_{\delta\in \{ -1,1\}}\frac{1+\delta\beta(u)}{2}f\pare{\delta\sqrt{t-u}y}~:~u\geq 0\right\}$ is $\pare{{\cal F}_u}$ predictable.  Since $G_t^\beta$ is the last exist time before time $t$ of some  reflected Brownian motion, well-known results concerning the dual predictable projection of last exit times for BM (and hence for reflected BM as well) ensure that~:~
$$
{\mathbb E}^x\pare{H_s K^s_{G^\beta_t}(f)} = {\mathbb E}^x\pare{\int_0^t H_s K^s_{u}(f)\pare{\frac{\pi}{2}(t-u)}^{-1/2}dL_u^{0}(|B^\beta|)}.
$$
In particular,  using Theorem \ref{prop-fund-Weinryb}, we have
\begin{equation*}
\begin{split}
&{\mathbb E}^x \pare{H_s\, f(B^\beta_t)\id{G^\beta_t > s}}\,=\int_0^\infty \mu^0(dy) {\mathbb E}^x \croc{\int_0^t H_s K^s_u(f)\pare{\frac{\pi}{2}(t-u)}^{-1/2}dL_u^{0}(|B^\beta|) }\\
&=\int_0^\infty \mu^0(dy){\mathbb E}^{x} \croc{H_s \int_0^t K^s_{u}(f)\pare{\frac{\pi}{2}(t-u)}^{-1/2} dL_u^{0}(|B^\beta|)}.
\end{split}
\end{equation*}
So that using the Markov property for the absolute value of Brownian motion, and denoting $\tilde{W}$ some standard Brownian motion independent of ${\cal F}_s$, we obtain~:
\begin{equation*}
\begin{split}
&{\mathbb E}^x \pare{H_s\, f(B^\beta_t)\id{G^\beta_t > s}}\\
&=\int_0^\infty \mu^0(dy) {\mathbb E}^{x} \croc{ H_s {\mathbb E}^{|B^\beta_s |}\int_0^{t-s}K^s_{u+s}(f)\pare{\frac{\pi}{2}\pare{t-(s+u)}}^{-1/2} dL_u^{0}(|\tilde{W}|)} \\
&=\int_0^\infty \mu^0(dy) {\mathbb E}^{x} \croc{ H_s \int_0^{t-s}K^s_{u+s}(f)\pare{\frac{\pi}{2}\pare{t-(s+u)}}^{-1/2}p\pare{u,|B^\beta_s |,0}du},
\end{split}
\end{equation*}
where we have used Exercise 1.12, Chap. X of \cite{RY} (whose result remains true for symmetric local time).
Performing the change of variable $\xi=\delta\sqrt{t-(s+u)}y$ finally yields
\begin{equation*}
\begin{split}
&{\mathbb E}^x \pare{H_s\, f(B^\beta_t)\id{G^\beta_t > s}}\,=\,\\
& \int_{-\infty}^\infty {\mathbb E}^{x} \croc{ H_s \sum_{\delta\in \{ -1,1\}}\int_0^{t-s}f(\xi)\frac{1+\delta (\beta\circ \sigma_s)(u)}{2}\sqrt{\frac{2}{\pi}}\frac{|\xi |{\rm e}^{-\frac{\xi^2}{2(t-(s+u))}}}{\pare{t-(s+u)}^{3/2}}\frac{{\rm e}^{-\frac{|B^\beta_s |^2}{2u}}}{\sqrt{2\pi u}} du}\,\id{\delta\xi >0}d\xi\\
\end{split}
\end{equation*}

On another hand, for a fixed time $s>0$ we may set $$D^\beta_s:=\inf\{u\geq 0~:~B^\beta_{s+u}=0\}=\inf\{u\geq 0~:~B^\beta_{s} + (W_{s+u} - W_s )=0\}.$$
So that using the equation solved by the inhomogeneous skew Brownian motion (\ref{ISBM}) and the usual Markov property for the standard Brownian motion~:
\begin{equation*}
\begin{split}
&{\mathbb E}^x \pare{H_s f(B^\beta_t)\id{G^\beta_t \leq s}} ={\mathbb E}^x \croc{H_s \,f\pare{B^\beta_s +(W_{s+(t-s)}-W_s)} \id{D^\beta_s \geq (t-s)}}\\
&={\mathbb E}^x \croc{{\mathbb E}^x\croc{H_s \,f\pare{B^\beta_s +(W_{s+(t-s)}-W_s)} \id{D^\beta_s \geq (t-s)}~|~{\cal F}_s}}\\
&={\mathbb E}^x \croc{H_s {\mathbb E}^{B^\beta_s}\croc{\,f\pare{\tilde{W}_{t-s}} \id{\tilde{T}_0 \geq (t-s)}}}\\
&=\int_{-\infty}^\infty dy f(y) {\mathbb E}^x \croc{H_s \frac{1}{\sqrt{2\pi (t-s)}}  \croc{\exp\pare{-\frac{(y-B^\beta_s)^2}{2(t-s)}} - \exp\pare{-\frac{(y+B^\beta_s)^2}{2(t-s)}} }\id{B^\beta_s\,y>0}}.
\end{split}
\end{equation*}
Adding these terms and using the characterization of conditional expectation finally yields that
\begin{equation*}
\begin{split}
{\mathbb E}^x \pare{f(B^\beta_t) | {\cal F}_s}&=\int_{-\infty}^\infty dy f(y) p^{\beta}(s,t ; B^\beta_s, y),
\end{split}
\end{equation*}
where we used the Definition \ref{definition-2}.
\end{proof}

Let us notice that, as $\int_{-\infty}^\infty dy f(y) p^{\beta}(s,t ; B^\beta_s, y)$ is a $\sigma(B^\beta_s)$-measurable random variable, we get from Proposition \ref{prop-markov-process} ,
\begin{equation}
\label{markov-prop-kar}
{\mathbb E}^x \pare{f(B^\beta_t) | {\cal F}_s}={\mathbb E}^x \pare{f(B^\beta_t) | B^\beta_s}=\int_{-\infty}^\infty dy f(y) p^{\beta}(s,t ; B^\beta_s, y).
\end{equation}

This leads naturally to the following important consequence.

\begin{proposition}
\label{prop-markov}
We have

i) The process $B^\beta$ is a Markov process, in the sense of Definition 2.5.10 in \cite{kara}.

ii) For all $x,y\in\R$ we have
\begin{equation}
\label{eq-Psx}
\P^{s,x}(B^\beta_t\in dy)=p^\beta(s,t;x,y)dy.
\end{equation}
\end{proposition}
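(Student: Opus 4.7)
The plan is to read off both claims almost directly from Proposition~\ref{prop-markov-process} and its rewriting \eqref{markov-prop-kar}. Introduce the auxiliary function
\[
g(z) \;:=\; \int_{\R} f(y)\,p^\beta(s,t;z,y)\,dy,
\]
which is Borel in $z$ by Fubini, so $g(B^\beta_s)$ is $\sigma(B^\beta_s)$-measurable, and in particular $\mathcal{F}_s$-measurable. Conditioning the identity of Proposition~\ref{prop-markov-process} on $\sigma(B^\beta_s)\subset\mathcal{F}_s$ via the tower property yields
\[
\E^x\bigl(f(B^\beta_t)\,|\,B^\beta_s\bigr) \;=\; g(B^\beta_s) \;=\; \E^x\bigl(f(B^\beta_t)\,|\,\mathcal{F}_s\bigr),
\]
which is exactly the Markov property in the sense of Definition~2.5.10 of \cite{kara}. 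This settles part (i).

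For part (ii), by the definition $\P^{s,x}=\P(\,\cdot\,|\,B^\beta_s=x)$ of a regular conditional probability, the identity established in (i) rewrites as $\E^{s,x}(f(B^\beta_t))=g(x)=\int_\R f(y)\,p^\beta(s,t;x,y)\,dy$, valid for $\P\circ (B^\beta_s)^{-1}$-almost every $x\in\R$ to start with. Choosing as regular version the measurable kernel built from $p^\beta(s,t;x,\cdot)$ of Definition~\ref{definition-2} extends the identity to every $x\in\R$, and therefore identifies $\P^{s,x}(B^\beta_t\in dy)=p^\beta(s,t;x,y)\,dy$ as claimed.

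The only mildly delicate point, and hence the main obstacle, is to ensure that this canonical regular version is legitimate, i.e.\ that $x\mapsto p^\beta(s,t;x,y)$ is continuous in $x$. Continuity on $\{x\neq 0\}$ is immediate from the explicit Gaussian expressions; continuity at $x=0$ follows because the second summand in \eqref{transition-density-complete} vanishes continuously at $x=0$ (the bracketed exponential difference and the indicator $\id{xy>0}$ conspire to do so), while the first summand is continuous in $x$ by dominated convergence. No further issue arises.
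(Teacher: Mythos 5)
Your proposal is correct and follows essentially the same route as the paper, which derives Proposition \ref{prop-markov} directly from Proposition \ref{prop-markov-process} and its rewriting \eqref{markov-prop-kar}, the conditional expectation given $\cF_s$ being a $\sigma(B^\beta_s)$-measurable functional of $B^\beta_s$. The only addition is your explicit discussion of the regular conditional version and the continuity of $x\mapsto p^\beta(s,t;x,y)$, a point the paper leaves implicit in its definition of $\P^{s,x}$.
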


\begin{remark}
The proposition \ref{prop-markov} will imply in turn that the family $p^\beta(s,t;x,y)$ may be considered as a transition family (t.f.). See the forthcoming Proposition 
\ref{prop:chapman-kolmogorov}.
\end{remark}

Notice that since the considerations of Proposition \ref{prop-markov-process} may be repeated if the fixed time $s$ is replaced by $T$ a $\pare{{\cal F}_t}$-stopping time, we may also state the following~:
\begin{corollary}
The process $B^\beta$ is a strong Markov process in the sense given by Theorem 3.1 in \cite{RY} Chap. III sect.3, p.102.
\end{corollary}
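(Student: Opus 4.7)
The goal is to promote the statement of Proposition \ref{prop-markov-process} from a deterministic time $s$ to an arbitrary $(\cF_t)$-stopping time $T$, namely to establish
$$
\E^x\pare{f(B^\beta_{T+h})\,|\,\cF_T}=\int_{-\infty}^\infty f(y)\,p^\beta(T,T+h;B^\beta_T,y)\,dy\quad\text{on }\{T<\infty\},
$$
for every $h>0$ and every positive Borel $f$. My plan is to replay the proof of Proposition \ref{prop-markov-process} line by line, with $s$ replaced throughout by $T$, and to isolate the three places where a property used for deterministic $s$ must be upgraded to a property at the stopping time $T$.

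First I would split the expectation along $\{G^\beta_{T+h}>T\}$ and $\{G^\beta_{T+h}\leq T\}$, exactly as in the proof of Proposition \ref{prop-markov-process}. The bookkeeping on the event $\{G^\beta_{T+h}>T\}$ rests on the fact that $\{u>T\}$ is a predictable set, so that for any bounded $\cF_T$-measurable random variable $H_T$ the process $(u,\omega)\mapsto H_T(\omega)\id{u>T(\omega)}$ is $(\cF_u)$-predictable; combined with the predictability of $\beta$, the approximating process $H_T K^T_u(f)$ introduced in the proof remains predictable. The dual predictable projection of the last exit time $G^\beta_{T+h}$ on $(\cF_u)$ is a property of the reflected Brownian motion $|B^\beta|$ and is therefore unaltered by the substitution $s\rightsquigarrow T$; the formula
$$
\E^x\pare{H_T K^T_{G^\beta_{T+h}}(f)}=\E^x\int_0^{T+h} H_T K^T_u(f)\pare{\frac{\pi}{2}(T+h-u)}^{-1/2}dL^0_u(|B^\beta|)
$$
goes through verbatim.

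The two Markov-type inputs must then be upgraded to strong Markov inputs. On $\{G^\beta_{T+h}\leq T\}$ the process $B^\beta$ does not revisit $0$ on $(T,T+h]$, hence $B^\beta_{T+u}=B^\beta_T+(W_{T+u}-W_T)$ for $u\in[0,h]$; applying the strong Markov property of the driving Brownian motion $W$ at $T$ yields the reflection principle contribution with $s$ replaced by $T$. On $\{G^\beta_{T+h}>T\}$ I need to assert the strong Markov property of $|B^\beta|$ at $T$, i.e.\ that conditionally on $\cF_T$ the shifted reflected process $(|B^\beta_{T+u}|)_{u\geq 0}$ is a reflected Brownian motion starting from $|B^\beta_T|$. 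The natural way to obtain this is to imitate the Tanaka computation of Proposition \ref{prop-fund-Weinryb} on the shifted process: setting $\tilde W_u:=W_{T+u}-W_T$ (a Brownian motion independent of $\cF_T$ by strong Markov of $W$) and using that the symmetric local time term does not contribute to $(B^\beta_{T+u})^2$, one checks that $(B^\beta_{T+u})^2$ satisfies the same square-root SDE as the square of a Brownian motion started at $B^\beta_T$, so Yamada--Watanabe uniqueness identifies the conditional law of $|B^\beta_{T+\cdot}|$ with that of a reflected Brownian motion from $|B^\beta_T|$.

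I expect the main obstacle to be precisely this last step: carefully verifying the strong Markov property of $|B^\beta|$ at an $(\cF_t)$-stopping time (rather than just at deterministic times), which requires a clean conditional Yamada--Watanabe argument. Once that is in place, the change of variable $\xi=\delta\sqrt{T+h-(T+u)}\,y$ and the identification with the transition density of Definition \ref{definition-2} are identical to those in the proof of Proposition \ref{prop-markov-process}, and the characterization of conditional expectations concludes the proof. If the direct Tanaka shift proves too cumbersome, a safe fallback is to approximate $T$ from above by the dyadic stopping times $T_n=\lceil 2^nT\rceil/2^n$, apply Proposition \ref{prop-markov-process} on each atom $\{T_n=k/2^n\}$, and pass to the limit using the continuity of $B^\beta$ together with the continuity of $(s,x)\mapsto\int f(y)p^\beta(s,s+h;x,y)dy$ for bounded continuous $f$, which is readable from the explicit formula in Definition \ref{definition-2}.
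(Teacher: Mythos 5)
Your proposal follows exactly the route the paper takes: the paper offers no separate proof, merely the remark that the considerations of Proposition \ref{prop-markov-process} may be repeated with the fixed time $s$ replaced by an $(\cF_t)$-stopping time $T$, which is precisely your line-by-line replay. Your identification of the points needing upgrade (predictability of $H_T\id{u>T}$, strong Markov of $W$ and of $|B^\beta|$ at $T$ via a conditional Yamada--Watanabe/Tanaka argument) correctly fills in the details the paper leaves implicit, so the proposal is sound and matches the paper's approach.
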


\subsection{Kolmogorov's continuity criterion}
The next result shows a Kolmogorov's continuity criterion for $B^{{{\beta}}}$ uniform w.r.t. the parameter function $\beta(.)$.
\begin{proposition}
\label{prop-kolmogorov}
There exists a universal constant $C>0$ (independent of the function $\beta(.)$) such that for all $\varepsilon\geq 0$ and $t\geq 0$,
\begin{equation}
\label{kolmogorov-2}
{\mathbb E}^{x} |B_{t+\varepsilon}^{{{\beta}}} - B_{t}^{{{\beta}}}|^4 \leq   C\,\varepsilon^2. 
\end{equation}
\end{proposition}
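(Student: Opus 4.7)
The plan is to start from the SDE \eqref{ISBM} and write
\begin{equation*}
B^\beta_{t+\varepsilon}-B^\beta_t \;=\; (W_{t+\varepsilon}-W_t) \;+\; \int_t^{t+\varepsilon}\beta(s)\,dL^0_s(B^\beta).
\end{equation*}
Since $|\beta|\le 1$, the absolute value of the second term is bounded by the local time increment $\Delta L := L^0_{t+\varepsilon}(B^\beta)-L^0_t(B^\beta)$. Using $(a+b)^4\le 8(a^4+b^4)$, it will be enough to establish
\begin{equation*}
\mathbb{E}^x|W_{t+\varepsilon}-W_t|^4 \;\le\; C_1\varepsilon^2, \qquad \mathbb{E}^x(\Delta L)^4 \;\le\; C_2\varepsilon^2,
\end{equation*}
uniformly in $\beta$, $x$ and $t$. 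The first bound is immediate, giving $3\varepsilon^2$; the real content is the second.

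To remove the $\beta$-dependence from $\Delta L$, I would first extend Proposition~\ref{prop-fund-Weinryb} to an arbitrary starting point: repeating the Itô computation on $(B^\beta)^2$ under $\mathbb{P}^x$ shows, via Yamada--Watanabe, that the law of $\bigl((B^\beta_t)^2\bigr)_{t\ge 0}$ does not depend on $\beta$, hence $(|B^\beta_t|)_{t\ge 0}\stackrel{\cL}{\sim}(|W^{|x|}_t|)_{t\ge 0}$ with $W^{|x|}$ a standard BM started at $|x|$. Since the symmetric local time at $0$ is a measurable functional of $|B^\beta|$ (e.g.\ via the occupation time formula), this identity in law of processes lifts to the local times, yielding
\begin{equation*}
\mathbb{E}^x(\Delta L)^4 \;=\; \mathbb{E}^{|x|}\bigl(L^0_{t+\varepsilon}(W)-L^0_t(W)\bigr)^4.
\end{equation*}

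It then remains to control the right-hand side uniformly in $|x|$ and $t$. By the Markov property at time $t$ applied to $W$,
\begin{equation*}
\mathbb{E}^{|x|}\bigl(L^0_{t+\varepsilon}(W)-L^0_t(W)\bigr)^4 \;=\; \mathbb{E}^{|x|}\Bigl[\,\mathbb{E}^{W_t}\bigl(L^0_\varepsilon(W)\bigr)^4\Bigr],
\end{equation*}
so it suffices to bound $\sup_z \mathbb{E}^z(L^0_\varepsilon(W))^4$. Starting from $z\neq 0$, $L^0_\varepsilon(W)$ vanishes on $\{T_0^z\ge \varepsilon\}$ and, by the strong Markov property at $T_0^z$ on $\{T_0^z<\varepsilon\}$, equals $L^0_{\varepsilon-T_0^z}(\tilde W)$ for an independent BM $\tilde W$ starting at $0$; conditioning on $T_0^z$ and using monotonicity of $s\mapsto \mathbb{E}^0(L^0_s(W))^4$ gives $\mathbb{E}^z(L^0_\varepsilon(W))^4\le \mathbb{E}^0(L^0_\varepsilon(W))^4$. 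By L\'evy's identity $L^0_\varepsilon(W)\stackrel{\cL}{\sim}|W_\varepsilon|$, the latter equals $3\varepsilon^2$, so one may take $C=48$.

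The main obstacle is the reduction step that makes the constant $\beta$-independent: one must carefully argue that the extension of Proposition~\ref{prop-fund-Weinryb} to $x\neq 0$ preserves the joint law of $(|B^\beta|,L^0(B^\beta))$ as a process, so that increments of $L^0(B^\beta)$ inherit the same distribution as those of the local time of a plain reflected Brownian motion. Once this path-functional identity is secured, the remainder is an essentially standard Markov/scaling argument for BM.
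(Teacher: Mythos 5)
Your argument is correct, but it takes a genuinely different route from the paper. The paper proves \eqref{kolmogorov-2} by conditioning on $\mathcal{F}_t$ and invoking the Markov property with the explicit transition density (Proposition \ref{prop-markov-process} and \eqref{eq-Psx}): writing $p^\beta(t,t+\varepsilon;B^\beta_t,y)$ via \eqref{transition-density-complete2}, bounding $|\beta|\le 1$ and absorbing the correction term into the Brownian density through \eqref{dens-bro2}, which yields ${\mathbb E}^{x} |B_{t+\varepsilon}^{\beta} - B_{t}^{\beta}|^4 \leq 2\,\E^x\big[\int_{\R}(y-B^\beta_t)^4 p(\varepsilon,B^\beta_t,y)\,dy\big]\le 6\varepsilon^2$. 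You instead work directly from the SDE \eqref{ISBM}, dominate the drift increment by the local time increment, and remove the $\beta$-dependence by extending Proposition \ref{prop-fund-Weinryb} to an arbitrary starting point and lifting the identity in law to the pair $(|B^\beta|,L^0(B^\beta))$; this lifting is exactly the measurable-functional argument the paper itself uses for Corollary \ref{egal-loi-triplet}, so the step you flag as the ``main obstacle'' is legitimate and standard, and your subsequent Markov/L\'evy estimate $\sup_z\E^z(L^0_\varepsilon(W))^4\le \E^0(L^0_\varepsilon(W))^4=3\varepsilon^2$ is sound, giving $C=48$. The trade-off: your proof is more elementary and self-contained, needing neither the Markov property of Section \ref{sec-Azema-Markov} nor the explicit density of Definition \ref{definition-2}; on the other hand, the paper's density-based computation is reused in Section \ref{sec-existence} to get a Kolmogorov criterion for the coordinate process built from the transition family $p^\beta$ \emph{before} it is known to solve \eqref{ISBM}, a setting where your SDE-based decomposition is not available, so your argument proves the stated proposition (under the section's standing assumption that a solution exists) but could not be substituted there.
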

\begin{proof}
Conditioning with respect to $\cF_t$ and using the Markov property and \eqref{eq-Psx} we get,
\begin{equation*}
\begin{split}
{\mathbb E}^{x} |B_{t+\varepsilon}^{{{\beta}}} - B_{t}^{{{\beta}}}|^4&=
\E^x\big[\int_{-\infty}^\infty(y-B^\beta_t)^4\,p^\beta(t,t+\varepsilon;B^\beta_t,y)dy\big]\\
&=\E^x\big[\int_{-\infty}^\infty(y-B^\beta_t)^4\,p(\varepsilon,B^\beta_t,y)dy\\
&\hspace{0.7cm}+\int_{-\infty}^\infty\,dy\,(y-B^\beta_t)^4\;
\int_0^\varepsilon \frac{\beta\circ\sigma_s(u)}{2}\frac{y}{\pi}\frac{e^{-\frac{y^2}{2(\varepsilon-u)}}}{\sqrt{u}(\varepsilon-u)^{3/2}}e^{-\frac{|B^\beta_t|^2}{2u}}du\big]\\
&\leq \E^x\big[\int_{-\infty}^\infty(y-B^\beta_t)^4\,p(\varepsilon,B^\beta_t,y)dy\\
&\hspace{0.7cm}+\int_{-\infty}^\infty\,dy\,(y-B^\beta_t)^4\;
\int_0^\varepsilon \frac{|y|}{2\pi}\frac{e^{-\frac{y^2}{2(\varepsilon-u)}}}{\sqrt{u}(\varepsilon-u)^{3/2}}e^{-\frac{|B^\beta_t|^2}{2u}}du\big]\\
&\leq 2\E^x\big[\int_{-\infty}^\infty(y-B^\beta_t)^4\,p(\varepsilon,B^\beta_t,y)dy\big]\\
\end{split}
\end{equation*}
where we have successively used \eqref{transition-density-complete2} and \eqref{dens-bro2}. As for the brownian density we have

$\int_{-\infty}^\infty(y-B^\beta_t)^4\,p(\varepsilon,B^\beta_t,y)dy\leq C\varepsilon^2$, we get the desired result.

\end{proof}

\section{Existence result for the inhomogeneous skew Brownian motion }
\label{sec-existence}
\subsection{Part I~:~the case of a smooth coefficient $\beta$}
\label{ssec-exist-C1}

\begin{proposition}
\label{exist-C1}
Assume there exist $-1<m<M<1$ s.t. $m\leq \beta(t)\leq M$, for all $t\geq 0$. Assume moreover that $\beta\in\cC^1_b(\R_+)$.
Then there exists a unique strong solution $B^\beta$ to \eqref{ISBM}.
\end{proposition}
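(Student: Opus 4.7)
The plan is to reduce the ISBM equation \eqref{ISBM} to a standard SDE with no local time term, by means of a time-dependent It\^o--Tanaka transformation in the spirit of \cite{peskir}, to solve that SDE in the strong sense, and then to transform back.

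Set $\phi(t,x) := x - \beta(t)|x|$. Under the assumption $-1 < m \le \beta \le M < 1$ and $\beta \in \cC^1_b$, for each $t$ the map $x \mapsto \phi(t,x)$ is a bi-Lipschitz homeomorphism of $\R$ with inverse $\psi(t,y) = y/(1 - \beta(t)\,\mathrm{sgn}(y))$; moreover $\phi$ is $\cC^1$ in $t$ and piecewise $\cC^2$ in $x$, with $\partial_x\phi(t,0\pm) = 1 \mp \beta(t)$. As a motivating step, I would assume that $B^\beta$ solves \eqref{ISBM} and apply Peskir's extended It\^o--Tanaka formula to $\phi(t, B^\beta_t)$. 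Two local-time contributions appear: $-\beta(t)\,dL^0_t(B^\beta)$ from the jump of $\partial_x\phi$ at $0$, and $+\beta(t)\,dL^0_t(B^\beta)$ from substituting $dB^\beta_s = dW_s + \beta(s)\,dL^0_s(B^\beta)$ into $\int\partial_x\phi(s,B^\beta_s)\,dB^\beta_s$ on the support of $dL^0$ (where $\mathrm{sgn}(B^\beta_s)$ vanishes by the symmetric convention). These cancel, leaving that $Y_t := \phi(t, B^\beta_t)$ solves
\begin{equation*}
dY_t = -\frac{\dot\beta(t)\,|Y_t|}{1 - \beta(t)\,\mathrm{sgn}(Y_t)}\,dt + \big(1 - \beta(t)\,\mathrm{sgn}(Y_t)\big)\,dW_t, \qquad Y_0 = \phi(0,x),
\end{equation*}
an SDE without local-time term whose coefficients are continuous off $\{y=0\}$.

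The core analytic step is to show that this SDE admits a unique strong solution $Y$, with respect to the filtration of the given Brownian motion $W$. Its drift and diffusion are bounded, the diffusion $\sigma(t,y) := 1 - \beta(t)\,\mathrm{sgn}(y)$ lies in $[1-M,\,1+M]$ and is thus bounded away from $0$, and $\sigma(t,\cdot)$ is uniformly of bounded variation with a single spatial discontinuity at $y=0$. These are exactly the hypotheses under which classical one-dimensional SDE theory (Nakao's pathwise uniqueness theorem, combined with weak existence via a Skorokhod tightness argument on mollifications of $\sigma$ and $b$) produces a strong solution; Yamada--Watanabe then yields strong uniqueness.

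To conclude, I would set $B^\beta_t := \psi(t, Y_t)$ and apply Peskir's formula to $\psi(t, Y_t)$. The drift terms telescope and one obtains $dB^\beta_t = dW_t + \frac{\beta(t)}{1-\beta(t)^2}\,dL^0_t(Y)$. Computing $d|B^\beta_t|$ in two ways --- once through Peskir applied to $|\psi|(t, Y_t)$, once by the Tanaka formula applied directly to $B^\beta$ --- and comparing yields the pathwise scaling $dL^0_t(Y) = (1-\beta(t)^2)\,dL^0_t(B^\beta)$. Substituting gives precisely $dB^\beta_t = dW_t + \beta(t)\,dL^0_t(B^\beta)$, so $B^\beta$ is a strong solution of \eqref{ISBM}; pathwise uniqueness follows from Theorem \ref{theo-fund-Weinryb}. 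The main obstacle in this plan is the careful bookkeeping of all local-time contributions under Peskir's formula in both directions, with the symmetric convention throughout; a secondary technicality is the existence theory for the transformed SDE despite the discontinuity of both its coefficients at $y=0$, resolved by the fact that the singularity is localized at a single point where $\sigma$ remains strictly bounded away from zero.
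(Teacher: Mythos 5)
Your proposal is correct and essentially the same as the paper's proof: the paper uses the time-dependent piecewise-linear map $r(t,y)$ (a time-dependent positive rescaling of your $\psi$) to reduce \eqref{ISBM}, via Peskir's change-of-variable formula, to an auxiliary SDE with bounded drift and uniformly elliptic, spatially discontinuous diffusion, solves it strongly via pathwise uniqueness plus weak existence and Yamada--Watanabe, transforms back, and identifies the local-time scaling by computing $d|X_t|$ in two ways (symmetric Tanaka versus Peskir applied to $|r|$), exactly as you outline. The only substantive difference is the reference for the auxiliary SDE: the paper invokes Le Gall's Theorem 1.3, which covers these time-dependent coefficients directly, whereas Nakao's theorem as you cite it is time-homogeneous and would need the mollification/tightness supplement you sketch.
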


Let us introduce some notations. We introduce the $\cC^{1,2}(\R_+\times\R^*)$ function $r(t,y)$ defined by
\begin{equation}
\label{eq-def-r}
r(t,y):=\left\{
\begin{array}{lll}
\dfrac{\beta(t)+1}{2}\,y&\text{if }& y\geq 0\\
\\
\dfrac{1-\beta(t)}{2}\,y&\text{if }& y<0.\\
\end{array}
\right.
\end{equation}

The proof of Proposition \ref{exist-C1} relies on the following lemma.

\begin{lemma}
\label{lem-exist-C1}
Under the assumptions of Proposition \ref{exist-C1} the SDE
\begin{equation}
\label{EDS-rrp}
dY_t=\frac{dW_t}{r'_y(t,Y_t)}-\frac{r'_t(t,Y_t)}{r'_y(t,Y_t)}dt,
\end{equation}
has a unique strong solution.
\end{lemma}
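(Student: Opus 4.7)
The SDE \eqref{EDS-rrp} is a one-dimensional SDE whose drift is continuous and globally Lipschitz in the space variable, and whose diffusion coefficient is bounded, uniformly elliptic, and has a single jump discontinuity at $y=0$. The plan is to combine weak existence (from classical martingale problem theory) with pathwise uniqueness (from the Le Gall--Nakao criterion for 1D SDEs with diffusion of bounded variation in space), and to conclude by the Yamada-Watanabe theorem.

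From \eqref{eq-def-r} I would first compute
$$\sigma(t,y):=\frac{1}{r'_y(t,y)}=\frac{2}{1+\mathrm{sgn}(y)\beta(t)},\qquad b(t,y):=-\frac{r'_t(t,y)}{r'_y(t,y)}=-\frac{\beta'(t)\,|y|}{1+\mathrm{sgn}(y)\beta(t)}.$$
The assumption $-1<m\leq \beta\leq M<1$ yields uniform bounds $0<c_1\leq \sigma(t,y)\leq c_2<\infty$. For each fixed $t$, $y\mapsto\sigma(t,y)$ is a step function with one jump at $y=0$ of amplitude controlled by $\|\beta\|_\infty<1$; in $t$ it inherits the $\cC^1_b$ regularity of $\beta$. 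The drift $b$ is continuous (both one-sided limits at $y=0$ equal $0$) and its one-sided derivatives in $y$ are bounded in terms of $\|\beta'\|_\infty$, $m$ and $M$; together with the elementary identity $|y|+|z|=|y-z|$ when $yz<0$, this shows that $b(t,\cdot)$ is globally Lipschitz uniformly in $t$, and in particular has at most linear growth.

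Weak existence follows from the Stroock-Varadhan theory of the martingale problem: on every strip $\{|y|\leq N\}$ the coefficients are bounded and $\sigma$ is uniformly elliptic, so a weak solution exists up to the exit time of the strip; the a priori estimate $\E^x\sup_{s\leq t}|Y_s|^2\leq C(1+|x|^2)\exp(Ct)$ obtained by Gronwall's lemma (using the linear growth of $b$ and the boundedness of $\sigma$) rules out explosion as $N\to\infty$. For pathwise uniqueness, the only non-trivial issue is the jump of $\sigma$ at $y=0$; taking $\rho(y):=K\,\id{y\geq 0}$ with $K$ sufficiently large, the Le Gall--Nakao criterion
$$|\sigma(t,y)-\sigma(t,z)|^2\leq |\rho(y)-\rho(z)|,\qquad y,z\in\R,\ t\geq 0,$$
holds uniformly in $t$, and combined with the globally Lipschitz drift it yields pathwise uniqueness via the standard Yamada-Watanabe approximation $\phi_n\uparrow|\cdot|$. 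Strong existence and uniqueness then follow from the Yamada-Watanabe theorem.

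The main obstacle is of course the jump of $\sigma$ at $y=0$, which precludes any direct application of Lipschitz-type strong existence results. The point is that this discontinuity is of \emph{bounded variation} type (a single jump of bounded amplitude, entirely determined by $\beta(t)$), which is exactly the situation captured by the Le Gall--Nakao criterion in dimension one.
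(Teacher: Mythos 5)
Your argument is correct and is essentially the paper's own proof made explicit: the paper observes that $r'_y$ is bounded, uniformly positive and piecewise smooth while the drift coefficient is measurable, and then invokes Le Gall's Theorem 1.3 (precisely the bounded-variation criterion you verify with $\rho=K\id{y\ge 0}$) for pathwise uniqueness together with weak existence, concluding by Yamada--Watanabe exactly as you do. Two minor remarks: your handling of the drift is in fact more careful than the paper's (which calls $r'_t/r'_y$ ``bounded'' although it only has linear growth in $|y|$, so your Lipschitz/Gronwall localization is the right repair), and with the symmetric convention $\mathrm{sgn}(0)=0$ your pointwise criterion fails for pairs $(y>0,\,z=0)$ since then $\sigma(t,0)=2$ --- fix this by defining $\sigma(t,0)$ through the $y\ge 0$ branch of $r'_y$ or by letting $\rho$ take an intermediate value at $0$ (the point $0$ being irrelevant for the dynamics in any case).
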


\begin{proof}
The coefficient $(r'_y(t,y))$ is piecewise $\cC^1$ with respect to $y$, measurable with respect to $(t,y)$, uniformly positive, and bounded.
The coefficient $r'_t(t,y)/r'_y(t,y)$ is measurable and bounded. Thus, according to Theorem 1.3 in \cite{legall}, and the remark following it, the SDE
\eqref{EDS-rrp} enjoys pathwise uniqueness and has a weak solution. Therefore the result, using \cite{yamada1,yamada2}. 
\end{proof}

\begin{proof}[Proof of Proposition \ref{exist-C1}]
We set $X_t\defi r(t,Y_t)$ with $Y$ the unique strong solution of \eqref{EDS-rrp} (corresponding to the given Brownian motion $W$). We will show that $X$ solves 
\eqref{ISBM} (with $W$). Therefore the result, as $X$ is $(\cF_t)$ adapted.
Using the change of variable formula proposed by Peskir in \cite{peskir}, we get
$$
\begin{array}{lll}
X_t&=&\displaystyle  r(0,Y_0)+\int_0^t r'_t(s,Y_s)ds+\int_0^tr'_y(s,Y_s)dY_s +\frac{1}{2}\int_0^t\beta(s)dL^0_s(Y)\\
\\
&=&\displaystyle r(0,Y_0)+\int_0^t r'_t(s,Y_s)ds+W_t\\
\\
&&\displaystyle -\int_0^tr'_t(s,Y_s)ds+\frac{1}{2}\int_0^t\beta(s)dL^0_s(Y)\\
\\
&=&\displaystyle r(0,Y_0)+W_t+\frac{1}{2}\int_0^t\beta(s)dL^0_s(Y).\\
\end{array}
$$
It remains to show that $\frac{1}{2}L^0_t(Y)=L^0_t(X)$. To this end we adapt the methodology proposed in Subsection 5.2 of \cite{lejay-sbm}. On one hand the symmetric Tanaka formula gives,
$$
d|X_t|=\mathrm{sgn}(X_t)dX_t+L^0_t(X)=\mathrm{sgn}(Y_t)dW_t+L^0_t(X),$$
where we have used $\mathrm{sgn}(X_t)=\mathrm{sgn}(Y_t)$ and $\mathrm{sgn}(0)=0$ (for the sign function involved in the symmetric Tanaka formula is the symmetric sign function).
On the other hand, using again the formula by Peskir, with the function $(t,y)\mapsto |r(t,y)|$, we get
$$
\begin{array}{lll}
|X_t|=|r(t,Y_t)|&=&\displaystyle  |r(0,Y_0)|+\int_0^t\mathrm{sgn}(Y_s) r'_t(s,Y_s)ds+\int_0^t \mathrm{sgn}(Y_s)dW_s\\
\\
&&\displaystyle -\int_0^t\mathrm{sgn}(Y_s) r'_t(s,Y_s)ds+\frac{1}{2}L^0_t(Y)\\
\\
&=&\displaystyle |r(0,Y_0)|+\int_0^t \mathrm{sgn}(Y_s)dW_s+\frac{1}{2}L^0_t(Y).\\
\end{array}
$$
By unicity of the decomposition of a semi-martingale we get $\frac{1}{2}L^0_t(Y)=L^0_t(X)$, and we are done.

\end{proof}

\subsection{Part II: the general case (proof of Theorem \ref{existence-isbm})}

Till the end of the section we are given a Borel function ${\bf \beta} : {\mathbb R}^+\rightarrow [-1,1]$.

\vspace{0.2cm}

Firstly, combining Propositions \ref{exist-C1} and \ref{prop-markov} we get the following crucial result.

\begin{proposition}
\label{prop:chapman-kolmogorov}
  The family of measures $p^\beta(s,t;x,y)dy$ of Definition \ref{definition-2} is a (inhomogeneous) family of transition probabilities satisfying the Chapman-Kolmogorov equation
\begin{equation}
\label{chapman-kolmogorov}
\int_{-\infty}^\infty p^\beta(s,t;x,y)p^\beta(t,v;y,z)dy= p^\beta(s,v;x,z),\hspace{0,2 cm}0<s<t<v,\,\,x,z\in {\mathbb R}.
\end{equation}
\end{proposition}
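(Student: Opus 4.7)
The plan is to establish Chapman-Kolmogorov first for smooth coefficients, where it is a direct consequence of the Markov property proved in Section~\ref{sec-markov}, and then pass to the general Borel case by an approximation-plus-dominated-convergence argument based on the explicit formula \eqref{transition-density-complete2}.

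\textbf{Step 1: smooth case.} Assume first that $\beta\in\mathcal{C}^1_b(\mathbb{R}_+)$ with $-1<m\leq\beta(t)\leq M<1$. Proposition \ref{exist-C1} furnishes a unique strong solution $B^\beta$ of \eqref{ISBM}, and Proposition \ref{prop-markov} then asserts that $B^\beta$ is a Markov process with one-dimensional marginals $\mathbb{P}^{s,x}(B^\beta_t\in dy)=p^\beta(s,t;x,y)dy$. Conditioning on $\mathcal{F}_t$ and using the Markov property,
\[
p^\beta(s,v;x,z)\,dz=\mathbb{P}^{s,x}(B^\beta_v\in dz)=\mathbb{E}^{s,x}\bigl[\mathbb{P}^{t,B^\beta_t}(B^\beta_v\in dz)\bigr]=\Bigl(\int_{\mathbb{R}}p^\beta(s,t;x,y)p^\beta(t,v;y,z)\,dy\Bigr)dz,
\]
which is exactly \eqref{chapman-kolmogorov} for such $\beta$.

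\textbf{Step 2: approximation.} For an arbitrary Borel $\beta:\mathbb{R}_+\to[-1,1]$, extend $\beta$ by $0$ on $\mathbb{R}_-$ and pick a standard sequence of smooth mollifiers $(\rho_n)$. Define $\beta_n:=(1-1/n)(\beta*\rho_n)$; then $\beta_n\in\mathcal{C}^1_b(\mathbb{R}_+)$, $|\beta_n|\leq 1-1/n$, and $\beta_n\to\beta$ in $L^1_{\mathrm{loc}}$, so (extracting a subsequence if needed) $\beta_n(u)\to\beta(u)$ for Lebesgue-a.e. $u\in\mathbb{R}_+$. Writing $p^{\beta_n}$ in the form \eqref{transition-density-complete2} and applying dominated convergence in the $u$-integral (with the $n$-independent, $u$-integrable majorant $\tfrac{|y|}{2\pi}\tfrac{e^{-y^2/2(t-s-u)}e^{-x^2/2u}}{\sqrt u(t-s-u)^{3/2}}$) gives $p^{\beta_n}(s,t;x,y)\to p^\beta(s,t;x,y)$ for every $s<t$, every $x\in\mathbb{R}$, and every $y\neq 0$.

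\textbf{Step 3: limit in Chapman-Kolmogorov.} Step~1 gives, for each $n$,
\[
\int_{-\infty}^{\infty}p^{\beta_n}(s,t;x,y)p^{\beta_n}(t,v;y,z)\,dy=p^{\beta_n}(s,v;x,z).
\]
The right-hand side converges pointwise by Step~2. For the left-hand side, since $|\beta_n|\leq 1$, the first summand of \eqref{transition-density-complete} is bounded (using $e^{-x^2/2u}\leq 1$ and \eqref{dens-bro}) by $2p(t-s,0,y)$, while the reflection summand is bounded by $p(t-s,x,y)$, hence
\[
p^{\beta_n}(s,t;x,y)\leq 2p(t-s,0,y)+p(t-s,x,y),\qquad p^{\beta_n}(t,v;y,z)\leq 2p(v-t,0,z)+p(v-t,y,z).
\]
The product of these two $n$-independent bounds is an integrable majorant in $y$: expanding, each of the four resulting terms has finite $y$-integral, either trivially (since $\int p(t-s,0,y)dy=\int p(t-s,x,y)dy=1$) or by the Chapman-Kolmogorov identity for the Brownian density ($\int p(t-s,\cdot,y)p(v-t,y,\cdot)dy=p(v-s,\cdot,\cdot)$). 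Dominated convergence therefore yields
\[
\int_{-\infty}^{\infty}p^{\beta_n}(s,t;x,y)p^{\beta_n}(t,v;y,z)\,dy\;\longrightarrow\;\int_{-\infty}^{\infty}p^\beta(s,t;x,y)p^\beta(t,v;y,z)\,dy,
\]
and identifying this limit with the limit $p^\beta(s,v;x,z)$ of the right-hand side proves \eqref{chapman-kolmogorov} in full generality.

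\textbf{Expected main obstacle.} The one genuinely technical step is producing an $n$-independent, $y$-integrable dominator for the integrand on the left of the Chapman-Kolmogorov relation, because the two factors depend on $y$ in quite different ways (through the initial point of the second and through the final point of the first); once one notices that the cross-term in the expansion of the dominator collapses to a Brownian convolution, the rest is routine. The remainder of the argument is essentially a bookkeeping exercise consisting of dominated-convergence passes in the integral representation \eqref{transition-density-complete2}.
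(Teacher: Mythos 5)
Your proof is correct and follows essentially the same route as the paper: establish \eqref{chapman-kolmogorov} for smooth $\beta$ via Propositions \ref{exist-C1} and \ref{prop-markov}, then approximate a general Borel $\beta$ by smooth $\beta_n$ and pass to the limit by dominated convergence in the representation \eqref{transition-density-complete2}. The only difference is that you make explicit the mollification and the $n$-independent integrable majorants (bounded by Gaussian densities and the Brownian Chapman--Kolmogorov identity), details the paper compresses into ``thanks to Lebesgue's domination theorem.''
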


\begin{proof}
For a smooth function $\beta(.)$ satisfying the assumptions of Proposition \ref{exist-C1}, there exists a solution to \eqref{ISBM}, which is a Markov process satisfying \eqref{eq-Psx}. Thus, the family $p^\beta(s,t;x,y)dy$ satisfies \eqref{chapman-kolmogorov} (see Chap. III, sect. 1, p. 80 of \cite{RY}).

For the Borel function $\beta(.)$ we may approximate $\beta$ by a sequence of smooth functions $\beta_n(.)$. As the family $p^{\beta_n}(s,t;x,y)dy$ satisfies
\eqref{chapman-kolmogorov} we recover the same result for $p^\beta(s,t;x,y)dy$ thanks to Lebesgue's domination theorem.
\end{proof}

Let us discuss briefly the point that we have reached till now. For any Borel function $\beta$ the family $p^\beta(s,t;x,y)dy$, is a transition family (t.f.). If a strong solution $B^\beta$ to \eqref{ISBM} exists, it is necessarily a Markov process with transition family $p^\beta(s,t;x,y)dy$. 

We will now construct a Markov process with t.f. 
$p^\beta(s,t;x,y)dy$ and show that it provides a weak solution of \eqref{ISBM}.

\vspace{0.2cm}


Thanks to the result of Proposition \ref{prop:chapman-kolmogorov}, we can construct a probability $\tilde{\Q}^0$ on $(\R^{[0,\infty)},{\cal R}^{[0,\infty)})$ such that the coordinate process $(\omega(t))_{t\geq 0}$ is a Markov process with t.f. $p^\beta(s,t;x,y)dy$ (starting from zero) on 
$(\R^{[0,\infty)},{\cal R}^{[0,\infty)},\tilde{\Q}^0)$ (Theorem 1.5 Chap. III
in \cite{RY}).

Using the same computations than in the proof of Proposition \ref{prop-kolmogorov}, it possible to show that a Kolmogorov's continuity criterion holds for 
$(\omega(t))_{t\geq 0}$
under $\tilde{\Q}^0$.
 This implies that we can construct a modification of $(\omega(t))_{t\geq 0}$ with $\tilde{\Q}^0$-a.s. continuous paths. Transporting the measure 
 $\tilde{\Q}^0$ on the set of continuous functions $C$ (see \cite{RY} p.35 for details) we get the following:

\begin{proposition}
\label{prop-process-markov}
 There exists a probability measure $\Q^0$ on $(C,{\cal B}(C))$ under which the coordinate process is a Markov process with t.f. $p^\beta(s,t;x,y)dy$.
            
\end{proposition}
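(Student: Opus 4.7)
The plan is to realise $\Q^0$ first on the canonical path space $(\R^{[0,\infty)},\cR^{[0,\infty)})$ via Kolmogorov's extension theorem, then pass to $(C,\cB(C))$ by means of a moment estimate. First, I invoke Proposition \ref{prop:chapman-kolmogorov}: since $p^\beta(s,t;x,y)\,dy$ is an inhomogeneous transition family satisfying the Chapman-Kolmogorov identity \eqref{chapman-kolmogorov}, the finite-dimensional distributions
\begin{equation*}
\tilde{\Q}^0\bigl[\omega(t_1)\in dy_1,\dots,\omega(t_n)\in dy_n\bigr]=p^\beta(0,t_1;0,y_1)\,p^\beta(t_1,t_2;y_1,y_2)\cdots p^\beta(t_{n-1},t_n;y_{n-1},y_n)\,dy_1\cdots dy_n
\end{equation*}
form a consistent projective family. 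Theorem 1.5 of Chapter III in \cite{RY} therefore produces a probability measure $\tilde{\Q}^0$ on $(\R^{[0,\infty)},\cR^{[0,\infty)})$ under which the coordinate process $(\omega(t))_{t\ge 0}$ is an inhomogeneous Markov process with transition family $p^\beta(s,t;x,y)\,dy$, started from $0$.

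Second, I verify the Kolmogorov continuity estimate announced in the paragraph preceding the statement. The proof of Proposition \ref{prop-kolmogorov} only used the Markov property with transitions $p^\beta(s,t;x,y)\,dy$, the decomposition \eqref{transition-density-complete2}, the pointwise bound $|\beta(\cdot)|\le 1$ and the identity \eqref{dens-bro2}; it never exploited that $B^\beta$ actually solves \eqref{ISBM}. Reading this computation under $\tilde{\Q}^0$ instead of $\P^x$ therefore yields a universal constant $C>0$, independent of $\beta$, such that
\begin{equation*}
\int_{\R^{[0,\infty)}}\bigl|\omega(t+\varepsilon)-\omega(t)\bigr|^4\,d\tilde{\Q}^0(\omega)\ \le\ C\varepsilon^{2},\qquad t,\varepsilon\ge 0.
\end{equation*}

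Third, the classical Kolmogorov-\v{C}entsov continuity criterion (Theorem 2.1, Chapter I of \cite{RY}) then guarantees the existence of a modification of $(\omega(t))_{t\ge 0}$ whose trajectories are $\tilde{\Q}^0$-almost surely continuous. Pushing $\tilde{\Q}^0$ forward along the canonical injection into $(C,\cB(C))$, as detailed on p.~35 of \cite{RY}, produces a probability $\Q^0$ on $(C,\cB(C))$ whose finite-dimensional marginals are unchanged; hence under $\Q^0$ the coordinate process remains Markov with transition family $p^\beta(s,t;x,y)\,dy$.

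The only delicate step is the moment estimate of the second paragraph, but it is really just a transcription of the computation already performed for Proposition \ref{prop-kolmogorov}; every other ingredient is standard. The existence of $\Q^0$ follows at once, and this is the object that will be combined with a flipping-of-excursions argument in the next section to produce a weak solution of \eqref{ISBM}.
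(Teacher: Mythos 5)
Your proposal is correct and follows essentially the same route as the paper: construct $\tilde{\Q}^0$ on $(\R^{[0,\infty)},\cR^{[0,\infty)})$ from the Chapman--Kolmogorov property of $p^\beta$ via Theorem 1.5, Chap.~III of \cite{RY}, replay the moment computation of Proposition \ref{prop-kolmogorov} (which indeed only uses the transition densities, not the SDE) to get the bound $C\varepsilon^2$, and then take a continuous modification and transport the measure to $(C,\cB(C))$ as on p.~35 of \cite{RY}. Your explicit remark that the estimate of Proposition \ref{prop-kolmogorov} depends only on the Markov property with kernel $p^\beta$ is exactly the point the paper relies on implicitly.
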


Proposition \ref{prop-process-markov} implies that there exists a Markov process $X^\beta$ with continuous paths and t.f. $p^\beta(s,t,x,y)$,
defined on a probability space $(C,{\cal B}(C),\Q^0)$.

We will denote by $(\cF^\beta_t)_{t\geq 0}$ the complete right continuous filtration endowed by $X^\beta$ on $(C,{\cal B}(C),\Q^0)$.

\vspace{0.2cm}
Having in mind the results of the previous sections, we begin to prove the following proposition~:
\begin{proposition}
We have
$$(|X_t^{ \beta }|)_{t\geq 0}  \stackrel{\cL}{\sim} \pare{|W_t|}_{t\geq 0}.$$ 
\end{proposition}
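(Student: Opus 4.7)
The plan is to show that, under $\Q^0$, the process $|X^\beta|$ is a continuous Markov process starting at $0$ whose transition semigroup coincides with that of reflected Brownian motion; since $|W|$ is characterized by the same data, the equality of the laws on $C([0,\infty),\R_+)$ will follow by coincidence of all finite-dimensional marginals combined with path continuity.

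The key ingredient is a symmetrization of the transition density. From \eqref{transition-density-complete2}, the correction
\[
p^\beta(s,t;x,y)-p(t-s,x,y)=\int_0^{t-s}\frac{\beta\circ\sigma_s(u)}{2}\frac{y}{\pi}\frac{e^{-y^2/(2(t-s-u))}}{\sqrt{u}(t-s-u)^{3/2}}e^{-x^2/(2u)}du
\]
is an odd function of $y$. Consequently, for every $z\in\R$,
\[
p^\beta(s,t;x,z)+p^\beta(s,t;x,-z)=p(t-s,x,z)+p(t-s,x,-z),
\]
a quantity that depends on $x$ only through $x^2$ (hence through $|x|$) and which, restricted to $z\ge 0$, equals the transition density of reflected Brownian motion from $|x|$ to $z$ in time $t-s$.

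Invoking the Markov property of $X^\beta$ provided by Proposition \ref{prop-process-markov}, I would then write, for every bounded Borel $f:[0,\infty)\to\R$ and $0\le s<t$,
\[
\E_{\Q^0}\pare{f(|X^\beta_t|)\,\big|\,\cF^\beta_s}=\int f(|y|)\,p^\beta(s,t;X^\beta_s,y)\,dy=\int_0^\infty f(z)\croc{p(t-s,X^\beta_s,z)+p(t-s,X^\beta_s,-z)}dz,
\]
which is measurable with respect to $\sigma(|X^\beta_s|)$. A standard tower-property argument with respect to the natural filtration of $|X^\beta|$ then yields the Markov property of $|X^\beta|$ with the reflected Brownian semigroup.

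To conclude, I would note that $X^\beta$ has continuous paths (this was precisely the Kolmogorov-type estimate allowing the transport of $\tilde{\Q}^0$ from $(\R^{[0,\infty)},\cR^{[0,\infty)})$ to $(C,\cB(C))$) and that $|X^\beta_0|=0$ $\Q^0$-a.s.; since $|W|$ is likewise a continuous Markov process starting at $0$ with the same semigroup, the two processes share all finite-dimensional distributions and therefore have the same law on $C([0,\infty),\R_+)$. The only non-routine point is the oddness-in-$y$ of the $\beta$-correction, which is an immediate inspection of \eqref{transition-density-complete2}, so no serious obstacle is foreseeable.
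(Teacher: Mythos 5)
Your proposal is correct and follows essentially the same route as the paper: both exploit the fact that the $\beta$-correction in \eqref{transition-density-complete2} is odd in $y$, so that $p^\beta(s,t;x,z)+p^\beta(s,t;x,-z)=p(t-s,x,z)+p(t-s,x,-z)$ depends on $x$ only through $|x|$ and gives the reflected Brownian transition density, and both then identify the law via the Markov property, path continuity and equality of transition functions (the paper conditions on $\{|X^\beta_s|=x\}$ splitting on the sign of $X^\beta_s$, while you condition on $\cF^\beta_s$ and use the tower property — a purely presentational difference).
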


\begin{proof}
Let $y,x>0$ and $0<s<t$. We compute
\begin{equation*}
\begin{split}
\Q^0(|X^\beta_t|\in dy\,\big|\;|X^\beta_s|=x)&=\Q^0(|X^\beta_t|\in dy\,\big|\;X^\beta_s=x)\Q^0(X^\beta_s>0\big|\;|X^\beta_s|=x)\\
&\hspace{0.2 cm}+\Q^0(|X^\beta_t|\in dy\,\big|\;X^\beta_s=-x)\Q^0(X^\beta_s<0\big|\;|X^\beta_s|=x)\\
&=dy\Big\{\Q^0(X^\beta_s>0\big|\;|X^\beta_s|=x)\big(p^\beta(s,t;x,y)+p^\beta(s,t;x,-y)\big)\\
&\hspace{0.2 cm}+ \Q^0(X^\beta_s<0\big|\;|X^\beta_s|=x)\big(p^\beta(s,t;-x,y)+p^\beta(s,t;-x,-y)\big)\Big\}\\
&=dy\Big\{\Q^0(X^\beta_s>0\big|\;|X^\beta_s|=x)\big(p(t-s,x,y)+p(t-s,x,-y)\big)\\
&\hspace{0.2 cm}+ \Q^0(X^\beta_s<0\big|\;|X^\beta_s|=x)\big(p(t-s,-x,y)+p(t-s,-x,-y)\big)\Big\}\\
&=dy\,[p(t-s,x,y)+p(t-s,x,-y)],
\end{split}
\end{equation*}
where we have used Equation 
\eqref{transition-density-complete2}, and the symmetry of $(x,y)\mapsto p(t,x,y)$ in the computations. The final right hand side expression is the well-known (homogeneous) density of a reflected Brownian motion $|W|$ starting from $x>0$.

As $|X^\beta|$ is Markov as well as $|W|$, and since both processes have continuous sample paths, we get the desired result (see for example Theorem 1.5 Chap. III
in \cite{RY}). 
\end{proof}
Consequently, we have the following theorem~:~
\begin{theorem}
\label{solution-faible}
 There exists a weak solution to \eqref{ISBM}.
\end{theorem}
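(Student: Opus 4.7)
The process $X^\beta$ on $(C,\mathcal{B}(C),\mathbb{Q}^0)$ has been constructed as a Markov process with the transition density $p^\beta(s,t;x,y)$ of Definition \ref{definition-2}, and we know that $|X^\beta|\stackrel{\cL}{\sim}|W|$. To promote $X^\beta$ to a weak solution of \eqref{ISBM}, one must exhibit a Brownian motion $\tilde W$ on some (possibly enlarged) probability space such that $X^\beta_t=x+\tilde W_t+\int_0^t\beta(s)\,dL^0_s(X^\beta)$. My plan is to approximate $\beta$ by smooth coefficients, invoke the strong-solution result of the smooth case (Proposition \ref{exist-C1}), and pass to the limit using the uniform Kolmogorov criterion of Proposition \ref{prop-kolmogorov}.

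More concretely, I pick a sequence $\beta_n\in\mathcal{C}^1_b(\mathbb{R}_+)$ with values in $[-1+\varepsilon_n,1-\varepsilon_n]$ converging to $\beta$ almost everywhere (mollification followed by a mild truncation). By Proposition \ref{exist-C1}, on the original filtered space $(\Omega,\mathcal{F},(\mathcal{F}_t),\mathbb{P})$ carrying the Brownian motion $W$, each $\beta_n$ produces a strong solution $X^{\beta_n}$ of \eqref{ISBM}. From the explicit formula in Definition \ref{definition-2} and dominated convergence one checks $p^{\beta_n}\to p^\beta$ pointwise, so the finite-dimensional marginals of $X^{\beta_n}$ converge to those of $X^\beta$. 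The uniform moment bound of Proposition \ref{prop-kolmogorov} yields tightness on $C(\mathbb{R}_+;\mathbb{R})$, hence $X^{\beta_n}$ converges weakly to a process of the law of $X^\beta$. Enlarging by the fixed process $W$ and by the local time $L^0(X^{\beta_n})$ (whose law is independent of $n$ by Proposition \ref{prop-fund-Weinryb}), the triple $(W,X^{\beta_n},L^0(X^{\beta_n}))$ is tight on $C^3$; Skorokhod's representation delivers an a.s.\ convergent subsequence on some auxiliary space with limit $(\tilde W,\tilde X,\tilde\ell)$, where $\tilde W$ is a standard Brownian motion and $\tilde X$ has the law of $X^\beta$.

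It then remains to pass to the limit in the identity
$$X^{\beta_n}_t-x-W_t=\int_0^t \beta_n(s)\,dL^0_s(X^{\beta_n}).$$
The left-hand side converges a.s.\ to the continuous process $A_t:=\tilde X_t-x-\tilde W_t$. I then want to identify $A_t=\int_0^t\beta(s)\,dL^0_s(\tilde X)$, which requires (i) $\tilde\ell=L^0(\tilde X)$, obtained from the continuity of the Tanaka map on semimartingales of uniformly controlled quadratic variation (here $\langle X^{\beta_n}\rangle_t=t$), and (ii) the replacement of $\beta_n$ by $\beta$ in the Stieltjes integral, using that $\beta_n\to\beta$ Lebesgue-a.e.\ and that $\mathbb{E}^0 L^0_s(X^{\beta_n})=\sqrt{2s/\pi}$ (Lemma \ref{lemma-hzero}) is absolutely continuous uniformly in $n$.

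The main obstacle is precisely this last identification: the integrator $dL^0_s(X^{\beta_n})$ is a singular random measure varying with $n$, and $\beta_n$ converges only weakly. A clean workaround is to bypass the direct passage to the limit and verify \emph{a posteriori} that the limiting finite-variation process $A$ is carried by $\{\tilde X=0\}$ (since each $\int_0^\cdot\beta_n\,dL^0(X^{\beta_n})$ is) and that $\mathbb{E}^0 A_t=\frac{1}{\sqrt{2\pi}}\int_0^t\beta(s)/\sqrt{s}\,ds$; combined with the Markov property of $\tilde X$ inherited from Proposition \ref{prop-markov} and Tanaka's formula applied to $|\tilde X|$, this pins down $A_t=\int_0^t\beta(s)\,dL^0_s(\tilde X)$ and completes the construction of a weak solution to \eqref{ISBM}.
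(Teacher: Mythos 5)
Your route (mollify $\beta$, solve strongly via Proposition \ref{exist-C1}, get tightness from Proposition \ref{prop-kolmogorov}, Skorokhod representation, pass to the limit in the SDE) is genuinely different from the paper's, but it has a real gap exactly where you locate the "main obstacle", and the proposed workaround does not close it. After the limit you have a continuous finite-variation process $A_t=\tilde X_t-x-\tilde W_t$, and you claim that knowing (i) $dA$ is carried by $\{\tilde X=0\}$ and (ii) $\E^0 A_t=\frac{1}{\sqrt{2\pi}}\int_0^t\beta(s)s^{-1/2}ds$, "combined with the Markov property and Tanaka's formula", pins down $A_t=\int_0^t\beta(s)\,dL^0_s(\tilde X)$. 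It does not: an unconditional expectation cannot identify a random process, and a finite-variation process carried by the zero set is not automatically a deterministic density against $dL^0(\tilde X)$. To make this rigorous you would need to show that $A$ is a continuous additive functional of the (time-inhomogeneous) Markov process $\tilde X$ and invoke a Motoo-type representation theorem saying that such a functional carried by $\{0\}$ has the form $\int_0^t c(s)\,dL^0_s(\tilde X)$ with $c$ deterministic, before matching expectations to get $c=\beta$ a.e.; none of this is stated or proved, and it is precisely the delicate point. That the difficulty is genuine is confirmed by the paper itself: in Section 7 the authors remark that they were \emph{not} able to prove the direct convergence of $\int_0^1\check\beta_n(s)\,dL^0_s(B^{\check\beta_n})$ to $\int_0^1\beta(s)\,dL^0_s(\check Y)$, and they too must fall back on the argument of Theorem \ref{solution-faible} to identify their limit.

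The paper's actual proof bypasses any approximation of $\beta$ at this stage. Working directly with the canonical Markov process $X^\beta$ with transition family $p^\beta(s,t;x,y)dy$ (Proposition \ref{prop-process-markov}), and knowing that $|X^\beta|$ is a reflected Brownian motion, it computes explicitly both $\E^0\pare{X^\beta_t\,|\,\cF^\beta_s}$ and $\E^0\pare{\int_s^t\beta(u)\,dL^0_u(X^\beta)\,|\,\cF^\beta_s}$ from the density and from the known law of the local time of reflected Brownian motion, finds that they produce the same drift term, and concludes that $X^\beta_t-\int_0^t\beta(u)\,dL^0_u(X^\beta)$ is a local martingale with quadratic variation $t$, hence a Brownian motion by L\'evy's theorem; this yields the weak solution with no passage to the limit in singular integrals. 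If you want to salvage your approach, the cleanest fix is the same one: once you know the limit $\tilde X$ is Markov with transition family $p^\beta$ (which your convergence of finite-dimensional laws does give), run the conditional-expectation/L\'evy argument on $\tilde X$ itself instead of trying to identify $A$ through its support and its mean.
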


\begin{proof}
Let $X^\beta$ be the Markov process discussed after Proposition  \ref{prop-process-markov}. Let $s<t$.
The Markov property yields (expectations are computed under $\Q^0$)~: 
\begin{equation*}
\begin{split}
{\mathbb E}^0 \pare{X^{\beta}_t | {\cal F}^\beta_s} &= \int_{-\infty}^\infty y\, p^{\beta}(s,t ; X^{\beta}_s, y) dy\\
&=\int_0^{t-s}\int_{-\infty}^\infty\frac{y|y|}{\sqrt{2\pi}}\frac{e^{-\frac{y^2}{2(t-s-u))}}}{(t-s-u)^{3/2}}dy\frac{e^{-|X^\beta_s|^2}}{\sqrt{2\pi u}}du\\
&\hspace{0.3cm}+\int_0^{t-s}\frac{\beta\circ\sigma_s(u)}{t-s-u}\pare{\int_{-\infty}^\infty|y|^2\frac{e^{-\frac{y^2}{2(t-s-u))}}}{\sqrt{2\pi(t-s-u)}}dy}\frac{e^{-|X^\beta_s|^2}}{\sqrt{2\pi u}}du\\
&\hspace{0.3cm}+\int_{-\infty}^\infty\frac{y}{\sqrt{2\pi(t-s)}}e^{-\frac{(y-X^\beta_s)^2}{2(t-s)}}dy\\
&=X^{\beta}_s + \int_0^{t-s}\beta\circ \sigma_s(u)\frac{{\rm e}^{-\frac{|X^{\beta}_s |^2}{2u}}}{\sqrt{2\pi u}} du. \\
\end{split}
\end{equation*}
Note that $X^{\beta}$ is a Markov process and $|X^{\beta}|$ is a reflected Brownian motion. Thus, $X^{\beta}$ admits a symmetric local time, which is a continuous additive functional of $X^{\beta}$. So that for $s<t$~:
$$
{\mathbb E}^0 \pare{\int_0^t \beta(u)dL^0_u\pare{X^{\beta}} | {\cal F}^{\beta}_s} = \int_0^s \beta(u)dL^0_u\pare{X^{\beta}} + {\mathbb E}^0 \pare{\int_s^{t} \beta (u)dL^0_u\pare{X^{\beta }}  | {\cal F}^{\beta}_s }.
$$
But,
\begin{equation*}
\begin{split}
 \E^0 \pare{\int_s^t \beta(u)dL^0_u\pare{X^{\beta}}   | {\cal F}^{\beta}_s  } 
&={\mathbb E}^{0} \pare{\int_0^{t-s} \beta\circ \sigma_s (u)dL^0_u\pare{X^{\beta\circ\sigma_s}}\,|\,X^{\beta}_s}\\
&={\mathbb E}^{0} \pare{\int_0^{t-s} \beta\circ \sigma_s (u)dL^0_u\pare{|\tilde{W}|}\,|\,X^{\beta}_s}\\
&=\int_0^{t-s}\beta\circ \sigma_s(u)\frac{{\rm e}^{-\frac{|X^{\beta}_s |^2}{2u}}}{\sqrt{2\pi u}} du.
\end{split}
\end{equation*}
Combining these facts ensures that $\left \{X_t^{\beta} - \int_0^t \beta(u)dL^0_u\pare{X^{\beta}}~:~t\geq 0\right \}$ is a $({\cal F}^\beta_{t})$ local martingale. Since $\langle X^{\beta} \rangle_t =\langle |X^{\beta}| \rangle_t = t$, we deduce that $\left \{X_t^{\beta} - \int_0^t \beta(u)dL^0_u\pare{X^{\beta}}~:~t\geq 0\right \}$
is in fact a $({\cal F}^\beta_t)$ Brownian motion (under $\Q^0$), ensuring that $X^{\beta}$ satisfies (\ref{ISBM}).
\end{proof}

The existence of a weak solution, together with the pathwise uniqueness stated in the theorem \ref{theo-fund-Weinryb}, ensures the existence of a unique strong solution to \eqref{ISBM} (see \cite{yamada1,yamada2}). It is clear that this solution is a strong Markov process with t.f. $p^\beta(s,t;x,y)dy$ (Section \ref{sec-markov}). Therefore Theorem \ref{existence-isbm} is proved.

\vspace{0.3cm}

The properties of the t.f. $p^\beta(s,t;x,y)$ allow us to state the following proposition:

\begin{proposition}
\label{prop-borel}
Let $\tilde{\beta} : {\mathbb R}^+\rightarrow [-1,1]$ a Borel function such that 
$$
\tilde{\beta}(t)=\beta(t)\hspace{0.2 cm}\text{for Lebesgue almost every }t\in [0,1].\quad(*)
$$
Then, 
$B^{\tilde{\beta}}$ and $B^{\beta}$ are equivalent.
\end{proposition}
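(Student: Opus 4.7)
The plan is to reduce the claim to an equality of transition functions and then invoke the Markov characterisation obtained in Theorem~\ref{existence-isbm}. By Theorem~\ref{existence-isbm}, both $B^\beta$ and $B^{\tilde\beta}$ are strong Markov processes issued from the same point $x$, with respective transition densities $p^\beta(s,t;x,y)$ and $p^{\tilde\beta}(s,t;x,y)$ given by Definition~\ref{definition-2}. Consequently, to identify the laws of the two processes it suffices to check that these transition densities coincide.

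The key observation is purely analytic: in the expression~\eqref{transition-density-complete} (or equivalently in~\eqref{transition-density-complete2}), the dependence on the parameter function appears \emph{only} through the integral
$$
\int_0^{t-s}\frac{\beta(s+u)}{2}\,\frac{y}{\pi}\,\frac{e^{-\frac{y^2}{2(t-s-u)}}}{\sqrt{u}(t-s-u)^{3/2}}\,e^{-x^2/2u}\,du,
$$
the reflection term $\frac{1}{\sqrt{2\pi(t-s)}}\bigl[\cdots\bigr]\id{xy>0}$ being independent of $\beta$. Performing the change of variable $v=s+u$ yields an integral of $\beta(v)$ against an absolutely continuous (in $v$) kernel on $[s,t]$. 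For $0\leq s<t\leq 1$, the hypothesis $(*)$ implies that $\tilde\beta(v)=\beta(v)$ for Lebesgue-almost every $v\in[s,t]$, so the integral is unchanged when $\beta$ is replaced by $\tilde\beta$. Hence
$$
p^\beta(s,t;x,y)=p^{\tilde\beta}(s,t;x,y),\qquad 0\leq s<t\leq 1,\ x,y\in\R.
$$

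Finally, since $B^\beta$ and $B^{\tilde\beta}$ are Markov processes with the same initial point and the same transition function on $[0,1]$, they have identical finite-dimensional distributions on $[0,1]$. Both have continuous sample paths (by Proposition~\ref{prop-kolmogorov} and the construction in Section~\ref{sec-existence}), so their laws agree on the path space $C([0,1],\R)$, which is the asserted equivalence. There is essentially no obstacle: the whole argument is driven by the fact that $\beta$ enters $p^\beta$ only through Lebesgue integrals, a feature that is already built into Definition~\ref{definition-2}; the only point to be careful about is making the translation $u\mapsto s+u$ explicit so that the Lebesgue-null exceptional set on $[0,1]$ is genuinely harmless.
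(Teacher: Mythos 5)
Your proof is correct and follows essentially the same route as the paper: the paper's own (one-line) argument likewise rests on the fact that $B^{\tilde\beta}$ and $B^\beta$ are Markov processes sharing the transition function of Definition~\ref{definition-2}, which is insensitive to modification of $\beta$ on a Lebesgue-null set since $\beta$ enters $p^\beta$ only through integrals in time. You merely make explicit the change of variable and the identification of laws via finite-dimensional distributions, which the paper leaves implicit.
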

\begin{proof}
Since $B^{\tilde{\beta}}$ and $B^{\beta}$ are Markovian processes with the same probability transition function $p^\beta(s,t;x,y)$ given by definition \ref{definition-2}, they are equivalent. 
\end{proof}

\section{Proof of Theorem \ref{theo-loi}}
The result of Theorem \ref{theo-loi} will appear as a consequence of the previous Proposition \ref{prop-loi-jointe-signe-g1-m1} and the following lemma which appears again as a consequence of Proposition \ref{prop-fund-Weinryb} and known results concerning the standard Brownian motion~:~
\begin{lemma}
\label{lemme-temps-local-bridge}
Under ${\mathbb P}^0$, the process $\{|\check{B}_t^\beta|:=\frac{1}{\sqrt{G^\beta_1}}|B_{t\,G^\beta_1}^\beta|~:~t\leq 1\}$ is the reflection (above $0$) of a Brownian Bridge independent of $\,{\cal G}:=\sigma\left \{G_1^\beta, B^\beta_{G_1^\beta+u}~;~u\geq 0\right \}$.
\end{lemma}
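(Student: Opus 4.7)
\bigskip
\textbf{Proof plan.} The plan is to lift the classical description of the pre-$G_1$ trajectory of reflected Brownian motion to $B^\beta$ by means of Proposition \ref{prop-fund-Weinryb}, handling the signs of the post-$G_1^\beta$ excursions as a separate step.

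First, I would observe that $G_1^\beta$, the scaled process $|\check{B}^\beta|$, and the post-$G_1^\beta$ absolute-value process $(|B^\beta_{G_1^\beta + u}|)_{u \ge 0}$ are all measurable functionals of the reflected trajectory $(|B^\beta_s|)_{s \ge 0}$ (the first through its zero set, the second through a deterministic scaling, and the third through restriction). By Proposition \ref{prop-fund-Weinryb}, $(|B^\beta_s|)_{s \ge 0} \stackrel{\cL}{\sim} (|W_s|)_{s \ge 0}$ under $\mathbb{P}^0$, hence the joint law of the triple $(|\check{B}^\beta|, G_1^\beta, |B^\beta_{G_1^\beta + u}|: u \ge 0)$ coincides with that of the Brownian analogues $(|\check{W}|, G_1, |W_{G_1 + u}|: u \ge 0)$, where $|\check{W}|_t := |W_{tG_1}|/\sqrt{G_1}$.

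Next, I would invoke the classical excursion-theoretic description of the straddling excursion above $G_1$ for Brownian motion (\cite{RY} Chap. XII, Exercise 3.8): under $\mathbb{P}^0$, the scaled pre-$G_1$ reflected process $|\check{W}|$ is the absolute value of a Brownian bridge of length $1$ from $0$ to $0$, and is independent of $\sigma(G_1, |W_{G_1 + u}|: u \ge 0)$. Transporting through the in-law identification obtained in the first step, one deduces that $|\check{B}^\beta|$ itself is the reflection of a Brownian bridge and is independent of $\sigma(G_1^\beta, |B^\beta_{G_1^\beta + u}|: u \ge 0)$.

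The final step is to upgrade this independence from $\sigma(G_1^\beta, |B^\beta_{G_1^\beta + u}|: u \ge 0)$ to the larger $\sigma$-algebra $\mathcal{G}$, which records in addition the \emph{signs} of $B^\beta$ on the excursion intervals of $|B^\beta|$ after $G_1^\beta$. The point is that $|\check{B}^\beta|$ is measurable with respect to $\sigma(|B^\beta_s|: s \le G_1^\beta)$, so it suffices to show that the sequence of signs of $B^\beta$ on successive excursions of $|B^\beta|$ on $(G_1^\beta, \infty)$ is independent of the pre-$G_1^\beta$ absolute-value trajectory, given the post-$G_1^\beta$ absolute values. This is built in by the excursion-flipping construction developed in Section 7: the signs are drawn as independent Rademacher variables whose parameters depend only on the left endpoints of the excursions, and are independent of the whole trajectory $(|B^\beta_s|)_{s \ge 0}$. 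Alternatively, one may argue from Proposition \ref{prop-loi-jointe-signe-g1-m1} applied at a dense set of times (see Remark \ref{t1-norole}) that the excursion signs have the required conditional Rademacher law with respect to $\mathcal{F}_{G_t^\beta}$ depending only on the excursion starting time.

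The main obstacle is precisely this last step: rigorously identifying the post-$G_1^\beta$ signs as driven by a randomness independent of the absolute-value process $|B^\beta|$. Once this is in place, the conclusion is a direct consequence of Proposition \ref{prop-fund-Weinryb} and the standard Brownian-bridge description of the pre-$G_1$ piece.
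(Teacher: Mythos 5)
Your first two steps are sound: since $G_1^\beta$, the rescaled pre-$G_1^\beta$ path and the post-$G_1^\beta$ absolute values are all functionals of $(|B^\beta_s|)_{s\geq 0}$, Proposition \ref{prop-fund-Weinryb} does transfer the classical Brownian description and gives that $|\check{B}^\beta|$ is a reflected bridge independent of $\sigma\pare{G_1^\beta,\,|B^\beta_{G_1^\beta+u}|~;~u\geq 0}$. But the lemma asserts independence from ${\cal G}$, which also records the \emph{signs} of $B^\beta$ after $G_1^\beta$, and this upgrade -- which you yourself flag as ``the main obstacle'' -- is exactly what your proposal does not establish. Neither of your two suggested fixes closes it as stated. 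The excursion-flipping construction (Proposition \ref{prop:identification}) is carried out only for piecewise constant $\bar{\beta}$; to use it for a general Borel $\beta$ you would have to pass to the limit through Theorem \ref{convergence}, and the statement you need involves the last-passage functional $G_1^{\beta_n}$, whose convergence to $G_1^\beta$ is precisely what the authors say they cannot guarantee (see the final remark of the paper), so this route needs a genuinely new argument. As for Proposition \ref{prop-loi-jointe-signe-g1-m1} and Remark \ref{t1-norole}, they only give, for each fixed $t$, the conditional expectation $\E^0\pare{\mathrm{sgn}(B^\beta_t)\,|\,{\cal F}_{G_t^\beta}}=\beta(G_t^\beta)$, i.e.\ one-time information about the sign of the straddling excursion; the lemma requires the joint conditional structure of \emph{all} excursion signs on $(G_1^\beta,\infty)$ together with their independence from the pre-$G_1^\beta$ absolute-value path, and that does not follow from the single-time statement without a further induction over successive excursions (say via the strong Markov property at the zeros after time $1$), which you do not carry out. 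Note the independence from the signs is not a cosmetic point: the lemma is used to make $\ell^0_1$ independent of $\pare{G_1^\beta,\mathrm{sgn}(B^\beta_1),M_1^\beta}$ in the proof of Theorem \ref{theo-loi}.

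The paper avoids this difficulty altogether by a different and self-contained device: time inversion. Setting $\tilde{B}^\beta_u:=u\,B^\beta_{1/u}$, the last zero $G_1^\beta$ becomes the \emph{first} zero after time $1$ of the inverted process, $\tilde{d}^\beta_1=1/G_1^\beta$, which is a stopping time; the strong Markov property at $\tilde{d}^\beta_1$, Brownian scaling and the time-inversion representation of the bridge then show in a few lines that the rescaled pre-$G_1^\beta$ path is a reflected bridge independent of $\tilde{\cal F}_{\tilde{d}^\beta_1}$, and this $\sigma$-field already contains ${\cal G}$ -- signs included -- because the ``past'' of the inverted process at $\tilde{d}^\beta_1$ is exactly $G_1^\beta$ together with the signed trajectory of $B^\beta$ after $G_1^\beta$. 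In other words, the paper gets the sign-independence for free from the stopping-time structure, whereas your decomposition forces you to prove a conditional description of the post-$G_1^\beta$ excursion signs that is not available in the paper for general Borel $\beta$. If you want to salvage your route, the missing ingredient is precisely such a description (or the time-inversion trick).
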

\begin{proof}
By the result of Theorem \ref{prop-fund-Weinryb} and time inversion, $|B^\beta_t| := t\,|\tilde{B}^\beta_{\frac{1}{t}}|$ is a reflected Brownian motion. Note that 
$$\tilde{d}^\beta_1:=\inf\pare{u>1~:~|\tilde{B}^\beta_u|=0} = \frac{1}{G_1^\beta}.$$ So that, since $\tilde{B}^\beta_{\frac{1}{G^\beta_1}} = \tilde{B}^\beta_{\tilde{d}^\beta_1}=0 $,
\begin{equation*}
\begin{split}
\frac{1}{\sqrt{G^\beta_1}}|B_{t\,G^\beta_1}^\beta| &= t\sqrt{G^\beta_1}|\tilde{B}^\beta_{\frac{1}{t\,G^\beta_1}}| = \frac{t}{\sqrt{\tilde{d}^\beta_1}}|\tilde{B}^\beta_{\frac{1}{G^\beta_1} + \frac{1}{G^\beta_1}\pare{\frac{1}{t}-1}} - \tilde{B}^\beta_{\frac{1}{G^\beta_1}}|\\
&= \frac{t}{\sqrt{\tilde{d}^\beta_1}}|\tilde{B}^\beta_{\tilde{d}^\beta_1+\tilde{d}^\beta_1\pare{\frac{1}{t}-1}} - \tilde{B}^\beta_{\tilde{d}^\beta_1}|.
\end{split}
\end{equation*}
Since $\left \{|\tilde{B}^\beta_{\tilde{d}^\beta_1+u} - \tilde{B}^\beta_{\tilde{d}^\beta_1}|~:~u\geq 0\right \}$ is a reflected Brownian motion independent of 
$\tilde{{\cal F}}_{\tilde{d}^\beta_1}$ and $\tilde{B}^\beta_{\tilde{d}^\beta_1}=0$, the process $|\hat{B}^\beta_u|:=\frac{1}{\sqrt{\tilde{d}^\beta_1}}|\tilde{B}^\beta_{\tilde{d}^\beta_1+\tilde{d}^\beta_1 u}|$ is also a reflected Brownian motion independent of $\tilde{{\cal F}}_{\tilde{d}^\beta_1}$~;~hence, 
$\pare{t|\hat{B}^\beta_{\frac{1}{t}-1}|}_{t\geq 0}$ is a reflected Brownian Bridge independent of $\tilde{{\cal F}}_{\tilde{d}^\beta_1}$. This implies the result. 
\end{proof}
\begin{corollary}
We have that under ${\mathbb P}^0$,
\begin{equation}
L_1^0\pare{B^\beta} = \sqrt{G_1^\beta}\ell^0_1
\end{equation}
where $\ell^0_1$ is the symmetric local time at time $1$ of a standard Brownian Bridge independent of ${\cal G}$.
\end{corollary}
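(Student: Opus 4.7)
The plan is to obtain the identity by a Brownian scaling applied to $L^0$ and an application of Lemma \ref{lemme-temps-local-bridge}.

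First, I would notice that since $G_1^\beta$ is, by definition, the last zero of $B^\beta$ before time $1$, the process $s\mapsto L^0_s(B^\beta)$ is constant on $[G_1^\beta, 1]$ (the symmetric local time can only grow on the zero set of $B^\beta$). Hence
\[
L^0_1(B^\beta) \;=\; L^0_{G_1^\beta}(B^\beta) \quad \mathbb{P}^0\text{-a.s.}
\]
Next, I would use Brownian scaling of the local time. With $c=\sqrt{G_1^\beta}$ and the process $\check B^\beta_u := B^\beta_{uG_1^\beta}/\sqrt{G_1^\beta}$, the elementary scaling relation for the symmetric local time (applied on a $\mathbb{P}^0$-negligible enlargement where $G_1^\beta$ is frozen, or formally via the occupation time formula) gives
\[
L^0_{G_1^\beta}(B^\beta) \;=\; \sqrt{G_1^\beta}\, L^0_1(\check B^\beta).
\]

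The second step is to invoke Lemma \ref{lemme-temps-local-bridge}: under $\mathbb{P}^0$, $|\check B^\beta|$ is distributed as the reflection $|b|$ of a standard Brownian bridge $b$ (of length one), and is independent of $\mathcal{G}$. Since the symmetric local time at $0$ is insensitive to the sign of the process --- formally, from the approximation
\[
L^0_t(\check B^\beta) \;=\; \lim_{\varepsilon\downarrow 0} \frac{1}{2\varepsilon}\int_0^t \mathbf{1}_{|\check B^\beta_s|<\varepsilon}\,ds \;=\; L^0_t(|\check B^\beta|),
\]
or equivalently from the symmetric Tanaka formula --- we get $L^0_1(\check B^\beta) = L^0_1(|\check B^\beta|)$, and by the lemma this has the same law as $L^0_1(|b|) = L^0_1(b) =: \ell^0_1$, independent of $\mathcal{G}$. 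Combining the two equalities gives
\[
L^0_1(B^\beta) \;=\; \sqrt{G_1^\beta}\,\ell^0_1,
\]
with $\ell^0_1$ independent of $\mathcal{G}$, which is exactly the announced identity.

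The only slightly delicate point will be to justify cleanly the scaling identity $L^0_{G_1^\beta}(B^\beta) = \sqrt{G_1^\beta}\, L^0_1(\check B^\beta)$ even though $G_1^\beta$ is random: I would do it by freezing $G_1^\beta$ via conditioning (using the independence statement in Lemma \ref{lemme-temps-local-bridge}) or, more directly, through the occupation time formula applied to the time change $s = u G_1^\beta$. Once this scaling step is in hand, the rest of the argument is an immediate consequence of Lemma \ref{lemme-temps-local-bridge}.
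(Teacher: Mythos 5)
Your proposal is correct and follows essentially the same route as the paper: local time is constant after the last zero $G_1^\beta$, then a pathwise scaling/change-of-variable in the occupation-limit definition of the symmetric local time, and finally Lemma \ref{lemme-temps-local-bridge} to identify $L^0_1\pare{|\check{B}^\beta|}$ with the local time $\ell^0_1$ of a reflected Brownian bridge independent of $\cal G$. The only difference is cosmetic (the paper works with $|B^\beta|$ and $|\check{B}^\beta|$ throughout, while you pass through $\check{B}^\beta$ and then drop the sign), and your extra care about the randomness of $G_1^\beta$ in the scaling step is a reasonable tightening of what the paper dismisses as "an obvious change of variable."
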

\begin{proof}
Under ${\mathbb P}^0$, we have that
\begin{equation*}
\begin{split}
&L_1^0\pare{B^\beta} = L_1^0\pare{|B^\beta |}=L_{G_1^\beta}^0\pare{|B^\beta |}\\
&=L_{G_1^\beta}^0\pare{\sqrt{G_1^\beta}|\check{B}^\beta_{./G_1^\beta}|}.
\end{split}
\end{equation*}
Recall that the symmetric local time a semimartingale $(Y_t)$ is given by
 $$L^0_t(Y) = \lim_{\varepsilon\rightarrow 0}\frac{1}{2\varepsilon}\int_0^t\id{(-\varepsilon, \varepsilon)}(Y_s)d\langle Y\rangle _s.$$
  So we find, using an obvious change of variable, that
\begin{equation*}
\begin{split}
&L_1^0\pare{B^\beta} =
\sqrt{G_1^\beta}L_{1}^0\pare{|\check{B}^\beta |}\,= \sqrt{G_1^\beta}\ell^0_1,
\end{split}
\end{equation*}
where the last equality comes from the result of Lemma \ref{lemme-temps-local-bridge}.
\end{proof}
Combining this result and the result of Proposition \ref{prop-loi-jointe-signe-g1-m1} gives that
$$
\pare{G_1^\beta, L^0_1\pare{B^\beta}, B^\beta_1} \stackrel{{\cal L}}{\sim} \pare{G_1, \sqrt{G_1}\ell^0_1, Y\sqrt{1-G_1}M_1}
$$ 
where $G_1\stackrel{{\cal L}}{\sim} {\rm Arcsin}$, $\ell^0_1\stackrel{{\cal L}}{\sim}\sqrt{2{\rm \bf e}}$, $M_1\stackrel{{\cal L}}{\sim}\sqrt{2{\rm \bf e}}$ are independent and where $Y$ denotes a r.v. independent of $M_1$ and $\ell^0_1$ and satisfying
$$
\cL\pare{Y\,|\,G_1=s}\stackrel{\cL}{\sim} \cR\pare{\frac{1+{\bf \beta}(s)}{2}}.
$$
This construction gives the result announced in Theorem \ref{theo-loi}.

\begin{remark}
We proved the result of Theorem \ref{theo-loi} using only probabilistic tools and not the computations of Section \ref{sec:fourier}. Since we used only that $B^\beta$ is solution of (\ref{ISBM}), the arguments developed in this section yield another proof of Proposition \ref{prop-densityzero} as a by product.
\end{remark}


\section{Another construction of the inhomogeneous skew Brownian motion}

\subsection{Construction of the Inhomogeneous SBM with piecewise constant coefficient $\beta$ from a reflected Brownian motion}

Let $\{\Pi~:~0=t_0<t_1<\dots<t_i<\dots <t_n=1\}$ be a partition of the interval $[0,1]$.

We define a function $i:[0,1]\to\llbracket 0:n-1\rrbracket$ by
$$i(t)=\sup\{ 0\leq k\leq n-1:\,t_k\leq t  \},\quad\forall t\in[0,1)\quad\text{and}\quad i(1)=n-1.$$

Let $\bar{\beta}:\R^+\to [0,1]$ be a r.c.l.l. function with constant value in $[-1,1]$ on each interval $[t_i,t_{i+1})$. In particular $\bar{\beta}$ is a Borel function. In this section, we give a construction of a weak solution of (\ref{ISBM}) on the interval $[0,1]$, obtained by changing the sign of the excursion of a 
reflecting Brownian motion. We are inspired by \cite{RY}, Chap. XII, Exercise 2.16 p. 487. 

Let us follow the notations of \cite{RY} concerning the excursions of a Brownian motion $B$: the excursion process is denoted by
$(e_s)_{s\geq 0}$, where the index $s$ is in the local time scale. Each excursion $e_s(\omega)$ has support $[\tau_{s-}(\omega),\tau_s(\omega))$, 
where $\tau_s(\omega)=\sum_{u\leq s}R(e_u(\omega))$, and $\tau_{s-}(\omega)=\sum_{u< s}R(e_u(\omega))$,
with $R(e_s(\omega))$ the length of the excursion $e_s(\omega)$. We recall that $L^0_t(B)$ can be recovered as the inverse of $\tau_t$.

\vspace{0.2cm}

The construction is the following~:~for each $0\leq i\leq n-1$ let $(Y^i_k)_k$ be a sequence of independent r.v.'s, identically distributed with law 
$\cR\pare{\frac{1+ \beta^n_i}{2}}$ (with $\beta^n_i\defi\bar{\beta}(t_i)$), and defined on some probability space $(\Omega,\cF,\P)$. Let $B$ be a standard Brownian motion independent of the $Y^i_k$'s, constructed on $(\Omega,\cF,\P)$.
The set of its excursions $e_s(\omega)$ is countable and may be given the ordering of $\N$.

We define a process $X^{\bar{\beta}}$ on $[0,1]$ by putting
$$
\forall t\in [0,1],\quad X^{\bar{\beta}}_t(\omega)=Y^{i(\tau_{s-}(\omega))}_{k_s(\omega)}(\omega)|e_s(t-\tau_{s-}(\omega),\omega)|,
$$
if $\tau_{s-}(\omega)\leq t\leq \tau_{s}(\omega)$ and $e_s(\omega)$ is the $k_s(\omega)$-th excursion in the above ordering.


For $\tau_{s-}(\omega)\leq t\leq \tau_{s}(\omega)$ we have $\tau_{s-}(\omega)=g_t(\omega)$ where $g_t\defi\sup\{u<t~:~|B_u| = 0\}$, and
$|e_s(t-\tau_{s-}(\omega),\omega)|=|B_t(\omega)|$. 

Note also that for a fixed $\omega \in \Omega$, the construction does not make a use of the entire double indexed sequence $(Y^i_k(\omega))_{i\in \{0,\dots,n-1\}, k\in \mathbb N}$.

\begin{proposition}
\label{prop:identification}
The process $X^{\bar{\beta}}$ is a weak solution of equation (\ref{ISBM}) with parameter $\bar{\beta}$ and starting from zero.
\end{proposition}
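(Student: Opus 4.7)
First I would note that, by the very construction of $X^{\bar{\beta}}$, one has $|X^{\bar{\beta}}_t(\omega)| = |B_t(\omega)|$ for every $t$ and $\omega$, so $|X^{\bar{\beta}}|$ is a reflected Brownian motion. In particular $\langle X^{\bar{\beta}}\rangle_t = t$ and $L^0_t(X^{\bar{\beta}}) = L^0_t(|B|) = L^0_t(B)$. Following the strategy announced in the introduction, my plan is to first show that $X^{\bar{\beta}}$ is an inhomogeneous Markov process whose transition density is exactly the function $p^{\bar{\beta}}(s,t;x,y)$ of Definition \ref{definition-2}, and then to reproduce the martingale argument used in the proof of Theorem \ref{solution-faible} (which relied only on the transition density and on the fact that $|X^\beta|$ is a reflected Brownian motion) in order to identify $X^{\bar{\beta}}$ as a weak solution of \eqref{ISBM}.

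Let $(\cH_t)$ be the usual augmentation of the natural filtration of $B$ enlarged by the Rademacher variables attached to those excursions of $B$ whose support begins by time $t$. The process $X^{\bar{\beta}}$ is $(\cH_t)$-adapted, the future increments of $B$ remain independent of $\cH_t$, and the Rademachers indexing excursions starting after time $t$ are independent of $\cH_t$. To establish the Markov property, fix $s < t$, a bounded Borel $f$, and set $d_s := \inf\{u > s : B_u = 0\}$, $g_t := \sup\{u \leq t : B_u = 0\}$, and split according to $\{d_s > t\}$ versus $\{d_s \leq t\}$. On $\{d_s > t\}$ the sign of $X^{\bar{\beta}}_t$ equals $\mathrm{sgn}(X^{\bar{\beta}}_s)$ and, conditionally on $|B_s| = |X^{\bar{\beta}}_s|$, the density of $|B_t|$ produced by the reflection principle recovers exactly the second (indicator) summand in \eqref{transition-density-complete}. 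On $\{d_s \leq t\}$ the sign of $X^{\bar{\beta}}_t$ is a fresh Rademacher with parameter $\tfrac{1+\bar{\beta}(g_t)}{2}$, independent of $\cH_s$ and of $|B_t|$; combining the strong Markov property of $|B|$ at $d_s$ with the joint density of $(G_v, |W_v|)$ for a reflected Brownian motion started at $0$ (read off from Proposition \ref{prop-exo-RY}) produces a triple integral in $(u, r, y)$ with $u = d_s - s$, $r = g_t - d_s$, which after the Fubini reduction of \eqref{petit-calcul} collapses to the first integral of \eqref{transition-density-complete}. Adding both contributions gives
\[
\E\croc{f(X^{\bar{\beta}}_t)\,\big|\,\cH_s} = \int_{-\infty}^{+\infty} f(y)\, p^{\bar{\beta}}(s,t;X^{\bar{\beta}}_s,y)\,dy.
\]

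With the Markov property and the correct transition density in hand, the computation carried out in the proof of Theorem \ref{solution-faible} applies verbatim: one finds
\[
\E^0\croc{X^{\bar{\beta}}_t \,\big|\, \cH_s} = X^{\bar{\beta}}_s + \int_0^{t-s} \bar{\beta}\circ\sigma_s(u)\,\frac{e^{-|X^{\bar{\beta}}_s|^2/2u}}{\sqrt{2\pi u}}\,du,
\]
and, using the reflected BM Markov property of $|X^{\bar{\beta}}| = |B|$ and the occupation density formula for the symmetric local time at $0$, the right-hand side equals $X^{\bar{\beta}}_s + \E^0\croc{\int_s^t \bar{\beta}(u)\,dL^0_u(X^{\bar{\beta}})\,\big|\,\cH_s}$. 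Therefore $W_t := X^{\bar{\beta}}_t - \int_0^t \bar{\beta}(u)\,dL^0_u(X^{\bar{\beta}})$ is a continuous $(\cH_t)$-local martingale with $\langle W\rangle_t = \langle X^{\bar{\beta}}\rangle_t = t$, and Lévy's characterization identifies $W$ as a standard Brownian motion; hence $X^{\bar{\beta}}$ satisfies \eqref{ISBM} with parameter $\bar{\beta}$ and starting from $0$.

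The main difficulty is the excursion-theoretic bookkeeping in the Markov step, specifically on $\{d_s \leq t\}$: one has to average the flipping probability $\tfrac{1 + \bar{\beta}(g_t)}{2}$ against the joint law of $(d_s - s,\, g_t - d_s,\, |B_t|)$ given $|B_s| = |x|$ and recognize the resulting triple integral as the one appearing in \eqref{petit-calcul}. In contrast with the classical constant-parameter SBM construction, the time-dependent parameter cannot be pulled out of the integrals, so the computation must be arranged in exactly the order that undoes the Fubini and change-of-variable steps leading to \eqref{petit-calcul}, which is where the piecewise-constant structure of $\bar{\beta}$ is implicitly used only through the fact that $g_t \mapsto \bar{\beta}(g_t)$ remains Borel-measurable.
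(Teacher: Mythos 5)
Your proposal is correct and follows the paper's overall architecture --- show that $X^{\bar{\beta}}$ is Markov with transition kernel $p^{\bar{\beta}}(s,t;x,y)$ by splitting at whether the process vanishes in $(s,t]$, then run the martingale/L\'evy argument of Theorem \ref{solution-faible} --- but the crucial Markov step is handled by a genuinely different route. On the event $\{G^{\bar{\beta}}_t>s\}$ the paper re-uses the balayage/dual predictable projection machinery of Proposition \ref{prop-markov-process}, fed by the preliminary facts of its first step (in particular the projection identity \eqref{projection-signe-2} and the independence of the normalized meander), whereas you compute directly: strong Markov property of $|B|$ at the first zero $d_s$ after $s$, the explicit joint law of the last zero before $t$ and the position of a reflected Brownian motion (read off Proposition \ref{prop-exo-RY}), and averaging of the flipping probability $\frac{1+\bar{\beta}(g_t)}{2}$, the resulting triple integral collapsing through \eqref{petit-calcul} to the first term of \eqref{transition-density-complete}. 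This amounts to making rigorous, for this explicit construction, the heuristic derivation of Definition \ref{definition-2} given in Section 2; it is more self-contained (no appeal to the Az\'ema-projection arguments), at the price of the excursion bookkeeping you flag, namely that the Rademacher attached to the excursion straddling $t$ is, conditionally on the path of $|B|$, independent of $\cH_s$ with the stated parameter --- the same ``by construction'' step the paper itself takes without detail in its first step, so you are at the paper's own level of rigor there. On the event $\{G^{\bar{\beta}}_t\le s\}$ both arguments use the reflection principle; the paper resolves the non-measurability of $\mathrm{sgn}(B_s)$ for $\bar{\cal F}_s$ by conditioning on ${\cal K}_s=\bar{\cal F}_s\vee\sigma(B_s)$ and noting the answer is $\bar{\cal F}_s$-measurable, while you achieve the same by working throughout with the larger filtration $\cH_s$; since your conditional expectation is a function of $X^{\bar{\beta}}_s$ alone, the Markov property and the weak-solution identification descend to the natural filtration as well, and your final step coincides with the paper's third step.
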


\begin{proof}
We only detail the main arguments of the proof and leave some of the details to the reader.

{\it 1st step : preliminary facts}

Note that $X^{\bar{\beta}}$ is constructed such that $$(|X_t^{\bar{\beta} }|)_{t\geq 0}  \stackrel{\cL}{\sim} \pare{|W_t|}_{t\geq 0}.$$
Moreover, defining $ G_1^{\bar{\beta}}:=\sup\pare{0\leq s\leq 1~:~X_s^{\bar{\beta}} = 0}$ and $M_1^{\bar{\beta}}:=|X_1^{\bar{\beta}}|/\sqrt{1-G_1^{\bar{\beta}}}$, we see from the construction of $X^{\bar{\beta}}$ that
\begin{equation}
\pare{{\rm sgn}(X^{\bar{\beta}}_1), G_1^{\bar{\beta}}, M^{\bar{\beta}}_1}\stackrel{\cL}{\sim}\pare{Y, G_1, M_1},
\end{equation}
where $G_1\stackrel{\cL}{\sim}\text{\rm Arcsin}$, $M_1\stackrel{\cL}{\sim}\sqrt{2{\rm \bf e}}$,
$G_1$ and $M_1$ are independent,
and where $Y$ denotes some r.v. independent of $M_1$ satisfying
$$
\cL\pare{Y\,|\,G_1=s}\stackrel{\cL}{\sim} \cR\pare{\frac{1+{\bar{\beta}}(s)}{2}}.
$$
Let $\pare{\bar{\cal F}_t}$ the natural filtration of $X^{\bar{\beta}}$ that has been completed and augmented in order to satisfy the usual conditions. From the construction of $X^{\bar{\beta}}$, we see that
\begin{equation}
\label{projection-signe-2}
{\mathbb E}\pare{{\rm sgn}(X^{\bar{\beta}}_t)~|~\bar{\cal F}_{G_t^{\bar{\beta}}}} = {\mathbb E}\pare{ Y^{i(G^{\bar{\beta}}_t)}_{k_{L^{0}_{t}(|B|)}}~|~\bar{\cal F}_{G_t^{\bar{\beta}}}} = \bar{\beta}(G_t^{\bar{\beta}}).
\end{equation}
\hfill
\vspace{0,2 cm}

{\it 2nd step : $X^{\bar{\beta}}$ is a $\pare{\bar{\cal F}_t}$ Markov process}

Because of these preliminary facts, we may repeat the arguments of the first part in the proof of Proposition \ref{prop-markov-process}~:~we see that for $s<t$ and any measurable function $f$~:~
\begin{equation*}
\begin{split}
&{\mathbb E} \pare{f(X^{\bar{\beta}}_t)\id{G^\beta_t > s} | \bar{\cal F}_{s}}\\
&=\int_{-\infty}^\infty d\xi f(\xi){\mathbb E}^{0} \croc{ \sum_{\delta\in \{ -1,1\}}\int_0^{t-s}\frac{1+\delta (\beta\circ \sigma_s)(u)}{2}\sqrt{\frac{2}{\pi}}\frac{|\xi |{\rm e}^{-\frac{\xi^2}{2(t-(s+u))}}}{\pare{t-(s+u)}^{3/2}}\frac{{\rm e}^{-\frac{|X^{\bar{\beta}}_s |^2}{2u}}}{\sqrt{2\pi u}} du}\,\id{\delta\xi>0 }.
\end{split}
\end{equation*}

The part ${\mathbb E} \big(f(X^{\bar{\beta}}_t)\id{G^\beta_t \leq s} | \bar{\cal F}_{s}\big)$ is more complicated since we cannot refer to  Equation (\ref{ISBM}).

Still, for a fixed time $s>0$ we may set $$D^{\bar{\beta}}_s:=\inf\{u\geq 0~:~X^{\bar{\beta}}_{s+u}=0\}.$$
We have~:
\begin{equation*}
\begin{split}
D^{\bar{\beta}}_s&=\inf\{u\geq 0~:~X^{\bar{\beta}}_{s+u}=0\}\\
&=\inf\{u\geq 0~:~X^{\bar{\beta}}_{s} + X^{\bar{\beta}}_{s+u} - X^{\bar{\beta}}_{s} =0\}\\
&=\,\inf\{u\geq 0~:~X^{\bar{\beta}}_{s} + {\rm sgn}(X^{\bar{\beta}}_{s})\pare{|B_{s+u}| - |B_{s}|}=0\}\\
&=\,\inf\{u\geq 0~:~\pare{|B_{s+u}| - |B_{s}|}=- |X^{\bar{\beta}}_{s}|\}.
\end{split}
\end{equation*}
But on the set $\{G^\beta_t \leq s\} = \{D^{\bar{\beta}}_s \geq (t-s)\}$ and for $r<D^{\bar{\beta}}_s$, the random variables $B_{s+r}$ and $B_{s}$ share the same sign. We deduce that on  the set $\{G^\beta_t \leq s\}$,
\begin{equation*}
\begin{split}
D^{\bar{\beta}}_s&
=\left \{
\begin{array}{l}
\inf\{u\geq 0~:~\pare{B_{s+u} - B_{s}}=-|X^{\bar{\beta}}_{s}|\}:= T_s^{+}~\hspace{0,4 cm}\text{if }B_s\geq 0~;\\
\inf\{u\geq 0~:~\pare{B_{s+u} - B_{s}}= |X^{\bar{\beta}}_{s}|\}:= T_s^-~\hspace{0,4 cm}\text{if }B_s < 0.
\end{array}
\right .
\end{split}
\end{equation*}

Let us introduce ${\cal K}_s := \bar{\cal F}_s\vee \sigma\pare{B_s}$. 

We have
\begin{equation*}
\begin{split}
&{\mathbb E} \pare{f(X_t^{\bar{\beta}})\id{G^\beta_t \leq s}| {\cal K}_{s}} ={\mathbb E}\pare{f\pare{X^{\bar{\beta}}_s +{\rm sgn}(X^{\bar{\beta}}_{s})\pare{|B_{s+(t-s)}| - |B_{s}|}} \id{D^{\bar{\beta}}_s \geq (t-s)}| {\cal K}_{s}}\\
&=\id{B_s\geq 0}{\mathbb E}\pare{f\pare{X^{\bar{\beta}}_s +{\rm sgn}(X^{\bar{\beta}}_{s})\pare{B_{s+(t-s)} - B_{s}}} \id{T_s^+ \geq (t-s)}| {\cal K}_{s}}\\
&\hspace{0,3 cm}+\id{B_s< 0}{\mathbb E}\pare{f\pare{X^{\bar{\beta}}_s -{\rm sgn}(X^{\bar{\beta}}_{s})\pare{B_{s+(t-s)} - B_{s}}} \id{T_s^- \geq (t-s)}| {\cal K}_{s}}.
\end{split}
\end{equation*}

Since $\pare{B_{s+u} - B_{s}~:~u\geq 0}$ is a Brownian motion independent of $B_s$ and of $\bar{\cal F}_{s}$ (and thus of ${\cal K}_s$), we may integrate this expression using the known laws of the Brownian motion killed when hitting $0$~:
\begin{equation*}
\begin{split}
&\id{B_s\geq 0}{\mathbb E}\pare{f\pare{X^{\bar{\beta}}_s +{\rm sgn}(X^{\bar{\beta}}_{s})\pare{B_{s+(t-s)} - B_{s}}} \id{T_s^+ \geq (t-s)}| {\cal K}_{s}}\\
&=\id{B_s\geq 0}\int_{-\infty}^\infty d\theta f\pare{X^{\bar{\beta}}_s +{\rm sgn}(X^{\bar{\beta}}_{s})\pare{\theta-|X_s^{\bar{\beta}}|}}\\
&\hspace{0,3 cm}\times \frac{1}{\sqrt{2\pi (t-s)}}  \croc{\exp\pare{-\frac{(\theta-|X^{\bar{\beta}}_s|)^2}{2(t-s)}} - \exp\pare{-\frac{(\theta+|X^{\bar{\beta}}_s|)^2}{2(t-s)}} }\id{|X^{\bar{\beta}}_s|\,\theta>0}\\
&=\id{B_s\geq 0}\int_{-\infty}^\infty dy f\pare{y}\frac{1}{\sqrt{2\pi (t-s)}}  \croc{\exp\pare{-\frac{(y-X^{\bar{\beta}}_s)^2}{2(t-s)}} - \exp\pare{-\frac{(y+X^{\bar{\beta}}_s)^2}{2(t-s)}} }\id{X^{\bar{\beta}}_s y>0}
\end{split}
\end{equation*}
where for the last line, we performed the change of variable $y = {\rm sgn}(X_s^{\bar{\beta}}) \theta$.
We have a similar term on the side $\{B_s < 0\}$, so that 
\begin{equation*}
\begin{split}
&{\mathbb E} \pare{f(X_t^{\bar{\beta}})\id{G^\beta_t \leq s}| {\cal K}_{s}}\\
& = \int_{-\infty}^\infty dy f\pare{y}\frac{1}{\sqrt{2\pi (t-s)}}  \croc{\exp\pare{-\frac{(y-X^{\bar{\beta}}_s)^2}{2(t-s)}} - \exp\pare{-\frac{(y+X^{\bar{\beta}}_s)^2}{2(t-s)}} }\id{X^{\bar{\beta}}_s y>0}\\
&={\mathbb E} \pare{f(X_t^{\bar{\beta}})\id{G^\beta_t \leq s}| \bar{\cal F}_{s}}.
\end{split}
\end{equation*}
Finally, adding both parts gives that
\begin{equation*}
\begin{split}
{\mathbb E} \pare{f(X^{\bar{\beta}}_t) | \bar{{\cal F}}_s}&=\int_{-\infty}^\infty dy f(y) p^{\bar{\beta}}(s,t ; X^{\bar{\beta}}_s, y).
\end{split}
\end{equation*}
This is enough to conclude that $X^{\bar{\beta}}$ is a Markov process with $p^{\bar{\beta}}(s,t;x,y)dy$ as its family of transition probability (satisfying (\ref{chapman-kolmogorov})).

\hfill
\vspace{0,2 cm}

{\it 3rd step : the process $X^{\bar{\beta}}$ is a weak solution of equation (\ref{ISBM})}

It suffices to perform the same computations as in the proof of Theorem \ref{solution-faible}. Indeed $X^{\bar{\beta}}$ is Markov and, by construction, $|X^{\bar{\beta}}|$ is a reflected Brownian motion.

\end{proof}


   
\subsection{The convergence result}
\label{sec:importante}

In this section, we show another way of constructing a weak solution of (\ref{ISBM}) on the time interval $[0,1]$.

\subsubsection{The technical assumption $\cH$}

In this subsection we state the assumptions of Theorem \ref{convergence}.

Let $\{\Pi_n~:~0=t^n_0<t^n_1<\dots<t^n_i<\dots t^n_n=1,\hspace{0.2 cm}n\geq 0\}$ a sequence of partitions over $[0,1]$. Assume that
$$
\sup_{0\leq i\leq n-1}|t^n_{i+1} -t^n_{i}|\xrightarrow[n\rightarrow+\infty]{} 0.
$$
We now state a technical assumption $\cH$ that will be used in force in all our considerations.

It is possible to construct a decreasing (resp. increasing) sequence 
$({\bf \check{\beta}}_n)$  (resp. $({\bf \hat{\beta}}_n)$) of r.c.l.l. step functions that are constant on each of the intervals $[t_i^n, t^n_{i+1})$ in such a way that
$$
{\bf \check{\beta}}_n(t) \geq {\bf \beta}(t)\geq {\bf \hat{\beta}}_n(t)\hspace{0,4 cm}
\text{and} \hspace{0,4 cm}
\lim_{n\rightarrow +\infty}{{\bf \check{\beta}}_n(t)} = \lim_{n\rightarrow +\infty}{{\bf \hat{\beta}}_n(t)} = {\bf {\beta}}(t),\hspace{0.2 cm}\forall t\in [0,1].
$$

\subsubsection{A convergence result}

Corresponding to such sequences $({\bf \check{\beta}}_n)$ and $({\bf \hat{\beta}}_n)$, we introduce the corresponding sequences $(B^{{\bf \check{\beta}}_n})_{n\geq 0}$ and $(B^{ \hat{\beta}_n})_{n\geq 0}$ of ISBM which are strong solutions to equation (\ref{ISBM}) and driven by the \underline{same} Brownian motion $W$. 

Then, we have the following comparison principle~:~
\begin{proposition}
\label{prop-increasing}
For any $n\geq 0$,
\begin{equation*}
{\P}\left [B_t^{{\bf \check{\beta}}_n}\geq B_t^{{\bf \check{\beta}}_{n+1}},\hspace{0.2 cm}\forall t\in [0,1]\right ] = 1,
\end{equation*}
and
\begin{equation*}
{\P}\left [B_t^{{\bf \hat{\beta}}_n}\leq B_t^{{\bf \hat{\beta}}_{n+1}},\hspace{0.2 cm}\forall t\in [0,1]\right ] = 1.
\end{equation*}
\end{proposition}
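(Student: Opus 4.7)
The plan is to reduce the pathwise comparison to the classical constant-parameter skew Brownian motion case via an induction on a common refinement of the partitions supporting $\check\beta_n$ and $\check\beta_{n+1}$, and then to invoke the comparison principle for constant-parameter SBM. I detail the argument for the decreasing sequence; the case of $(\hat\beta_n)$ is identical with the inequalities reversed.

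Since $\check\beta_n$ and $\check\beta_{n+1}$ are r.c.l.l.\ step functions, I first choose a common refinement $0=s_0<s_1<\cdots<s_N=1$ on whose subintervals both functions are constant, with values $a_k\defi\check\beta_n(s_k)\geq b_k\defi\check\beta_{n+1}(s_k)$. By Theorem~\ref{existence-isbm}, $B^{\check\beta_n}$ and $B^{\check\beta_{n+1}}$ are the unique strong solutions to \eqref{ISBM} driven by the same Brownian motion $W$, and the strong Markov property (stated in the corollary following Proposition~\ref{prop-markov}) implies that, on each subinterval $[s_k,s_{k+1})$ and conditionally on $\cF_{s_k}$, each process behaves as a classical constant-parameter SBM started from its value at $s_k$ and driven by the shifted Brownian motion $W^{(k)}_u\defi W_{s_k+u}-W_{s_k}$.

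I would then argue by induction on $k\in\{0,1,\dots,N\}$ that the inequality $B^{\check\beta_n}_t\geq B^{\check\beta_{n+1}}_t$ holds almost surely for all $t\in[0,s_k]$. The base case $k=0$ is immediate since both processes share the initial condition $x$. For the inductive step, the comparison principle for constant-parameter skew Brownian motion (see Ouknine \cite{Ouknine} and Le~Gall \cite{legall} for the general comparison theory of one-dimensional SDE's with local time) asserts that two strong solutions $X^a,X^b$ of classical SBM equations with parameters $a\geq b$, driven by the same Brownian motion and starting from $X^a_0\geq X^b_0$, satisfy $X^a_t\geq X^b_t$ almost surely for all $t\geq 0$. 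Applying this on $[s_k,s_{k+1}]$ with parameters $a_k\geq b_k$, driving noise $W^{(k)}$ and initial conditions ordered by the induction hypothesis, extends the ordering to $[0,s_{k+1}]$. After $N$ steps one obtains the announced inequality on $[0,1]$.

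The principal obstacle is the constant-parameter comparison itself. A direct attempt via the symmetric Tanaka formula applied to $(X^b-X^a)^+$ uses that $X^b-X^a$ is of finite variation (so that $L^0(X^b-X^a)\equiv 0$) to obtain
$$(X^b_t-X^a_t)^+=b\int_0^t\mathbf{1}_{\{X^a_s<0=X^b_s\}}dL^0_s(X^b)-a\int_0^t\mathbf{1}_{\{X^b_s>0=X^a_s\}}dL^0_s(X^a),$$
whose right hand side is \emph{not} pointwise signed, so one cannot conclude immediately. To make the argument work one analyses the excursion structure of the pair: away from the zero sets of $X^a$ and $X^b$ the difference $X^b-X^a$ is constant, hence any positive excursion of $X^b-X^a$ must be created at an instant when both processes vanish simultaneously and opens into $\{X^b>0>X^a\}$; the monotonicity $a\geq b$ rules this scenario out. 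This is precisely the content of the classical comparison theorem for one-dimensional SDE's with a local-time drift invoked above, and its application inside each subinterval of the refinement is the technical heart of the induction step.
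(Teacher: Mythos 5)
Your proposal is correct and follows essentially the same route as the paper: the paper also views $B^{\check\beta_n}$ and $B^{\check\beta_{n+1}}$ as concatenations of constant-parameter skew Brownian motions (you merely make the common refinement and the interval-by-interval induction with ordered starting points explicit) and then invokes the comparison principle for the SBM from Le~Gall \cite{legall}, exactly the result you cite.
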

\begin{proof}
 Since $\check{\bf \beta}_n$ (resp. $\hat{\bf \beta}_n$) is a step function, the process $B^{\check{\bf \beta}_n}$ can be viewed as a concatenation of (homogeneous) skew Brownian motions on each interval $[t^n_i,t^n_{i+1}[$. Thus the result is a direct consequence of the comparison principle for the SBM (see \cite{legall}, p.73).
\end{proof}

\begin{remark}
In the above proof it is convenient to see the existing process $B^{\check{\bf \beta}_n}$ as a concatenation of SBM's. Conversely we may wish to take the concatenation of SBM's as the starting point of the construction of an ISBM with piecewise constant coefficient. However we did not manage to exploit this idea, because the flow of a classical skew Brownian motion is not defined for all starting points $x$ simultaneously (see the remark in the introduction of \cite{B-C-flow}, previously cited).
\end{remark}

Let us now state the main result of this section,
\begin{theorem}
\label{convergence}
Assume that $\cH$ is satisfied.

Then,
\begin{equation*}
{\P}\left [\lim_{n\rightarrow +\infty}\sup_{t\in [0,1]}|B_t^{{\bf \check{\beta}}_n} -B_t^{\beta}| = \lim_{n\rightarrow +\infty}\sup_{t\in [0,1]}|B_t^{{\bf \hat{\beta}}_n} - B_t^{\beta}| = 0 \right ] = 1.
\end{equation*}
\end{theorem}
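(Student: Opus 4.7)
The overall strategy is to bracket $B^\beta$ between the two monotone sequences, show that the gap between them shrinks uniformly to zero almost surely, and then identify the common limit with $B^\beta$ via pathwise uniqueness.

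First, I set $D^n_t := B^{\check{\beta}_n}_t - B^{\hat{\beta}_n}_t$. After refining the partitions so that $\check{\beta}_n$ and $\hat{\beta}_n$ are step functions on a common partition, the argument used to prove Proposition \ref{prop-increasing} (the SBM comparison principle applied interval by interval) yields $D^n_t \ge 0$ almost surely, and the monotonicity assumptions $\check{\beta}_n\downarrow\beta$, $\hat{\beta}_n\uparrow\beta$ imply that $(D^n_t)_n$ is non-increasing in $n$ for each $t$. Using the explicit formula in Definition \ref{definition-2} together with dominated convergence (legitimate since $|\check{\beta}_n|,|\hat{\beta}_n|\le 1$ and both sequences converge pointwise to $\beta$), one gets $\E^x[B^{\check{\beta}_n}_t] - \E^x[B^{\hat{\beta}_n}_t] \to 0$, hence $\E^x[D^n_t] \to 0$ and $D^n_t \to 0$ in probability for every fixed $t \in [0,1]$.

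Next, I would promote this pointwise convergence to uniform convergence. Proposition \ref{prop-kolmogorov} provides a universal fourth-moment increment estimate, so $\E|D^n_{t+\varepsilon}-D^n_t|^4 \le C'\varepsilon^2$ uniformly in $n$; the family $(D^n)_n$ is therefore tight in $C([0,1],\R)$, and combined with the finite-dimensional convergence from the previous step this gives $D^n \to 0$ in law in $C([0,1],\R)$. In particular $S_n := \sup_{t\in[0,1]} D^n_t \to 0$ in probability; but $(S_n)_n$ is itself non-increasing in $n$ (by monotonicity of both sequences), and a non-increasing sequence of non-negative random variables converging in probability to $0$ must converge almost surely, so $S_n\to 0$ a.s. Using the monotone pointwise limits once more, $(B^{\check{\beta}_n})$ and $(B^{\hat{\beta}_n})$ both converge uniformly on $[0,1]$, almost surely, to a common continuous process $\tilde B$ adapted to $(\cF_t)$.

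It remains to verify $\tilde B = B^\beta$ a.s. I would show that $\tilde B$ solves \eqref{ISBM} and invoke the pathwise uniqueness of Theorem \ref{theo-fund-Weinryb}. The symmetric Tanaka formula rewrites $L^0_t(B^{\check{\beta}_n}) = |B^{\check{\beta}_n}_t| - |x| - \int_0^t \mathrm{sgn}(B^{\check{\beta}_n}_s)dW_s$. Proposition \ref{prop-fund-Weinryb} passes to the limit to give $|\tilde B|\stackrel{\cL}{\sim}|W|$, so $\{s:\tilde B_s=0\}$ has Lebesgue measure zero a.s.; bounded convergence in It\^o's isometry then yields $L^2$-convergence of the stochastic integral, and the local times converge to $\ell_t := |\tilde B_t| - |x| - \int_0^t \mathrm{sgn}(\tilde B_s)dW_s$. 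A continuity-and-compactness argument using uniform convergence localises $\mathrm{supp}(d\ell)\subseteq\{s:\tilde B_s=0\}$: on any interval where $|\tilde B|\ge\varepsilon>0$ the process $B^{\check{\beta}_n}$ stays away from $0$ for $n$ large, so $L^0_\cdot(B^{\check{\beta}_n})$ is constant there. To pass to the limit in $B^{\check{\beta}_n}_t = x + W_t + \int_0^t\check{\beta}_n(s)dL^0_s(B^{\check{\beta}_n})$, one writes
\[\int_0^t\check{\beta}_n(s)dL^0_s(B^{\check{\beta}_n}) - \int_0^t\beta(s)d\ell_s = \int_0^t(\check{\beta}_n-\beta)(s)dL^0_s(B^{\check{\beta}_n}) + \Bigl[\int_0^t\beta(s)dL^0_s(B^{\check{\beta}_n}) - \int_0^t\beta(s)d\ell_s\Bigr].\]
The first term has expectation $\int_0^t|\check{\beta}_n-\beta|(s)ds/\sqrt{2\pi s}\to 0$, using the $n$-free identity $\E L^0_s(B^{\check{\beta}_n}) = \sqrt{2s/\pi}$ from Proposition \ref{prop-fund-Weinryb} and dominated convergence; the second term is controlled by approximating $\beta$ by simple functions and using $L^2$-convergence of the local times. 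One obtains $\tilde B_t = x + W_t + \int_0^t\beta(s)d\ell_s$, and the It\^o-Tanaka formula applied to the semimartingale $\tilde B$, together with $\mathrm{sgn}(0)=0$ and the support property of $d\ell$, identifies $\ell_t = L^0_t(\tilde B)$. The main obstacle is precisely this last passage to the limit inside the local-time integral, which demands handling convergence of random Stieltjes measures and relies crucially on the universal estimate $\E L^0_s(B^{\check{\beta}_n}) = \sqrt{2s/\pi}$ independent of $n$.
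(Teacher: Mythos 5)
Your argument reaches the correct conclusion, but it identifies the limit by a genuinely different mechanism than the paper, and the step you treat most briefly is precisely the one the authors flag as delicate. For the uniform convergence you bracket the limit between the two monotone families via $D^n=B^{\check{\beta}_n}-B^{\hat{\beta}_n}$, kill $\E D^n_t$ using the explicit densities, and upgrade through tightness plus monotonicity of $S_n$; the paper instead takes the monotone pointwise limits, gets continuity of the limit from the uniform Kolmogorov bound of Proposition \ref{prop-kolmogorov}, and applies Dini's theorem --- both routes are sound. The real divergence is the identification: the paper deliberately does \emph{not} pass to the limit in $\int_0^t\check{\beta}_n(s)dL^0_s(B^{\check{\beta}_n})$ (its closing remark states the authors could not prove that convergence directly); instead it shows, as in Section \ref{sec-existence}, that the limit $\check{Y}$ is a Markov process with transition family $p^\beta$, reruns the martingale computation of Theorem \ref{solution-faible} to obtain $\check{Y}_t=x+\check{W}_t+\int_0^t\beta(s)dL^0_s(\check{Y})$ for some Brownian motion $\check{W}$, identifies $\check{W}=W$ through the two Tanaka decompositions, and concludes by pathwise uniqueness. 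You pass to the limit in the SDE itself, and your sketch of the second term (``approximate $\beta$ by simple functions and use $L^2$-convergence of the local times'') is exactly the convergence the paper leaves open, so it must be written out in full. It does work, but only because the expected local-time measure $\E\,dL^0_s(B^{\check{\beta}_n})=p(s,x,0)\,ds$ is the \emph{same} absolutely continuous measure for every $n$ (your density $1/\sqrt{2\pi s}$ is the $x=0$ case; for a general starting point use $p(s,x,0)$): one approximates $\beta$ in $L^1(p(s,x,0)ds)$ by finite combinations of indicators of intervals, whose integrals against $dL^0(B^{\check{\beta}_n})$ converge to the corresponding $d\ell$-integrals because $L^0_t(B^{\check{\beta}_n})\to\ell_t$ in probability at each fixed time, while the approximation errors are controlled uniformly in $n$ in expectation (Fubini--Tonelli as in Lemma \ref{lemma-hzero}); this yields convergence in probability of the local-time integrals, which suffices, and the remaining identification $\ell=L^0(\tilde{B})$ via the support property and the symmetric Tanaka formula is fine. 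In short: the paper's route avoids any convergence of random Stieltjes integrals at the price of re-deriving the Markov property of the limit; your route is more direct at the SDE level and, once the approximation step is detailed as above, actually establishes (in probability) the convergence $\int_0^1\check{\beta}_n\,dL^0_s(B^{\check{\beta}_n})\to\int_0^1\beta\,dL^0_s(\check{Y})$ that the paper's final remark declares unresolved --- the obstruction the authors describe (convergence of $G^{\check{\beta}_n}_t$) concerns a pathwise argument, not the in-probability statement your decomposition needs.
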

\begin{proof}
\medskip
{\it 1st step : convergence UCP}
\medskip

Let $t\in [0,1]$ fixed. The result of Proposition \ref{prop-increasing} implies that
$(B_t^{{\bf \check{\beta}}_n})_{n\geq 0}$ (resp. $(B_t^{{\bf \hat{\beta}}_n})_{n\geq 0}$) is an a.s. decreasing (resp. increasing) sequence of random variables bounded from below (resp. bounded from above). Thus, the sequence $(B_t^{{\bf \check{\beta}}_n})_{n\geq 0}$ (resp. $(B_t^{{\bf \hat{\beta}}_n})_{n\geq 0}$) converges a.s. to some random variable
$\check{Y}_t$ (resp. $\hat{Y}_t$). For simplicity, we concentrate in the rest of the proof to the family of random variables $\{\check{Y}_t~:~0\leq t\leq 1\}$.

From Lebesgue's theorem and the result of Proposition \ref{prop-kolmogorov}, we have for $t\in [0,1]$~:
\begin{equation}
\label{kolmogorov-2}
\begin{split}
{\mathbb E} |\check{Y}_{t+\varepsilon} - \check{Y}_t|^4 &= {\mathbb E} \lim\limits_{n\rightarrow +\infty}|B^{\check{\bf \beta}_n}_{t+\varepsilon} - B_t^{\check{\bf \beta}_n}|^4\leq C \varepsilon^2.
\end{split}
\end{equation}
Kolmogorov's continuity criterion implies that there is a continuous modification of the process $\check{Y}$ (that we still denote abusively $\check{Y}$). Moreover, Proposition \ref{prop-increasing} implies that the sequence $(B_{.}^{{\bf \check{\beta}}_n})_{n\geq 0}$ is an a.s decreasing sequence of continuous functions on the compact space $[0,1]$ converging simply to a continuous function $\check{Y}_.$~:~from Dini's theorem this convergence is uniform almost surely. 
Consequently, $(B_{.}^{{\bf \check{\beta}}_n})_{n\geq 0}$ is a.s. a Cauchy sequence for the uniform norm over $[0,1]$. Moreover, Lebesgue's dominated convergence theorem ensures that
\begin{equation}
\label{uniform-L1}
{\mathbb E}\left [\sup_{t\in [0,1]}|B_{t}^{{\bf \check{\beta}}_p}-B_{t}^{{\bf \check{\beta}}_q}|\right ]\xrightarrow[p,q\rightarrow +\infty]{} 0.
\end{equation}
From (\ref{uniform-L1}), we see that $(B_{.}^{{\bf \check{\beta}}_n})_{n\geq 0}$ is a Cauchy sequence in the complete space ${\mathbb D}_{\rm ucp}$ (see for example \cite{Protter} Chap. II, p. 57). Combining these facts ensures that the family $(\check{Y}_t)$ agregates as a process with a.s. continuous trajectories and we have proved that~:~
\begin{equation}
\label{uniform-L1-2}
\sup_{t\in [0,1]}|B_t^{{\bf \check{\beta}}_n} -\check{Y}_t| \xrightarrow[n\rightarrow +\infty]{\P- a.s.} 0.
\end{equation}

\medskip
{\it 2nd step : identification of the limit}
\medskip

Let us proceed to identify $\pare{\check{Y}_t}_{t\in [0,1]}$ with the unique strong solution of (\ref{ISBM}) (relative to the Brownian motion
$W$).

On one hand, from the fact that $(|B_t^{{\bf \check{\beta}}_n }|)_{t\geq 0}  \stackrel{\cL}{\sim} \pare{|W_t|}_{t\geq 0}$ and from (\ref{uniform-L1-2}), we deduce that
$(|\check{Y}_t|)_{t\in [0,1]}\stackrel{\cL}{\sim} \pare{|W_t|}_{t\in [0,1]}$. In particular, $\pare{|\check{Y}_t|}_{t\in [0,1]}$ is a semimartingale and admits a symmetric local time process $L_.^{0}(|\check{Y}|)$.

A consequence of the (symmetric) Tanaka formula is that
\begin{equation}
\label{ito-tanaka}
\begin{split}
|B_t^{{\bf \check{\beta}}_n}| &= |x| + \int_0^t {\rm sgn}(B_s^{{\bf \check{\beta}}_n})dW_s + L^0_t(B^{{\bf \check{\beta}}_n})\hspace{0,4 cm}(\text{with }\;\;{\rm sgn}(0)=0).\\
\end{split}
\end{equation}
As $|\check{Y}|$ is a reflected brownian motion we have
\begin{equation}
\label{ito-tanakaY-1}
\begin{split}
|\check{Y}_t| &= |x| + \tilde{W}_t + L_t^{0}\pare{|\check{Y}|},
\end{split}
\end{equation}
for some brownian motion $\tilde{W}$. But
 from the a.s. uniform convergence of $(B_t^{{\bf \check{\beta}}_n})_{t\in [0,1]}$ towards $(\check{Y}_t)_{t\in [0,1]}$ and the dominated convergence theorem for stochastic integrals (see for example Theorem 2.12 p.142 in \cite{RY}), 
 we can see that there is a finite variation process $A$ s.t. 
 $$\sup_{s\in[0,1]}|L^0_s(B^{{\bf \check{\beta}}_n})-A_s|\xrightarrow[n\rightarrow +\infty]{\P}0, $$
 and that
 $$
 \sup_{s\in[0,1]}\big|\,|B_s^{{\bf \check{\beta}}_n}|-(|x| + \int_0^s {\rm sgn}(\check{Y}_u)dW_u+A_s)\,\big|\xrightarrow[n\rightarrow +\infty]{\P}0.
 $$
 Thus,
 \begin{equation}
 \label{ito-tanakaY-2}
 |\check{Y}_t|=|x| + \int_0^t {\rm sgn}(\check{Y}_u)dW_u+A_t.
 \end{equation}
 Using \eqref{ito-tanakaY-1} and \eqref{ito-tanakaY-2}, and the unicity of the Doob decomposition of a semimartingale yields that
 \begin{equation}
 \label{ito-tanakaY-3}
 |\check{Y}_t|=|x| + \int_0^t {\rm sgn}(\check{Y}_s)dW_s+L_t^{0}\pare{|\check{Y}|}.
 \end{equation}

 Note that we have proven that,
\begin{equation*} 
\left [\sup_{t\in [0,1]}\left |L^0_t(B^{{\bf \check{\beta}}_n}) -L^{0}_t(|\check{Y}|) \right |>\varepsilon\right ] \xrightarrow[n\rightarrow +\infty]{\P}0.
\end{equation*}

On another hand, proceeding as in Section \ref{sec-existence}, it is not difficult to see from the a.s. uniform convergence of $(B_t^{{\bf \check{\beta}}_n})_{t\in [0,1]}$ towards $(\check{Y}_t)_{t\in [0,1]}$ that $(\check{Y}_t)_{t\in [0,1]}$ is a Markov process with $p^\beta(s,t;x,y)dy$ as its t.f. Hence, we may proceed just as in Step 3 of Proposition \ref{prop:identification} and we see that there exists a Brownian motion $\check{W}$ such that 
\begin{equation}
\label{ito-tanaka-2}
\check{Y}_t = x + \check{W}_t + \int_0^t \beta(s) dL^{0}_s(|\check{Y}|) = x + \check{W}_t + \int_0^t \beta(s) dL^{0}_s(\check{Y}).
\end{equation}
It remains to identify $\check{W}$ with $W$.

From \eqref{ito-tanakaY-3} and \eqref{ito-tanaka-2}, we see that necessarily
\begin{equation}
\label{ito-tanaka-3}
\begin{split}
|\check{Y}_t| &= |x| + \int_0^t {\rm sgn}(\check{Y}_s)d\check{W}_s + L_t^{0}(|\check{Y}|) = |x| + \int_0^t {\rm sgn}(\check{Y}_s)d{W}_s + L_t^{0}(|\check{Y}|),
\end{split}
\end{equation}
so that
$$
\int_0^t {\rm sgn}(\check{Y}_s)d\check{W}_s = \int_0^t {\rm sgn}(\check{Y}_s)dW_s,
$$
and
$$
\check{W}_t = \int_0^t {\rm sgn}(\check{Y}_s){\rm sgn}(\check{Y}_s)d\check{W}_s = \int_0^t {\rm sgn}(\check{Y}_s){\rm sgn}(\check{Y}_s)dW_s = W_t.
$$
This ends the proof.

\end{proof}

\begin{remark}
Theorem \ref{convergence} gives a construction of a solution of \eqref{ISBM} when the technical assumption $\mathcal{H}$ is satisfied. When this is not the case, we may conclude that there exists a weak solution to \eqref{ISBM}, with the help of Proposition \ref{prop-borel}. 
\end{remark}

\begin{remark}
We have not been able to prove directly that
\begin{equation*} 
\lim_{n\rightarrow +\infty}{\P}\left [ \left |\int_0^1{\bf \check{\beta}}_n(s) dL^0_s(B^{{\bf \check{\beta}}_n}) -\int_0^1 {\bf {\beta}}(s)  dL^0_s(\check{Y}) \right |>\varepsilon\right ] = 0. 
\end{equation*}
This convergence is strongly related to the convergence of the sequence $(G_t^{\check{\beta}_n})_{n\geq 0}$ towards $\check{G}_t:= \sup\{0\leq s\leq t~:~\check{Y}_s = 0\}$, which cannot be guaranteed neither by the uniform convergence of $(B^{\check{\beta}_n})_{n\geq 0}$ towards $\check{Y}$, nor by the monotonicity of $(B^{\check{\beta}_n})_{n\geq 0}$.
\end{remark}




\section*{Acknowledgements}
Both authors would like to thank Pr. A. Gloter for helpful discussion.





\bibliographystyle{amsplain}

\providecommand{\bysame}{\leavevmode\hbox to3em{\hrulefill}\thinspace}
\providecommand{\MR}{\relax\ifhmode\unskip\space\fi MR }
\providecommand{\MRhref}[2]{%
  \href{http://www.ams.org/mathscinet-getitem?mr=#1}{#2}
}
\providecommand{\href}[2]{#2}

\end{document}
